\newcommand{\Aa}{\mathbb{A}}
\newcommand{\Zz}{\mathbb{Z}}
\newcommand{\Ff}{\mathbb{F}}
\newcommand{\Rr}{\mathbb{R}}
\newcommand{\Cc} {\mathbb{C}}
\newcommand{\Hh}{\operatorname{\mathrm{H}}}
\newcommand{\Ch}{\operatorname{\mathrm{C}}}
\newcommand\im{\mathop{\rm Im}}
\renewcommand\ker{\mathop{\rm Ker}}
\newcommand\pairing[2]{\langle #1 , #2 \rangle}
\newcommand{\nldr}{\textsc{nldr}}
\newcommand{\Rips}{\operatorname{\rm Rips}}
\newcommand{\Witness}{\operatorname{\rm Witness}}
\newtheorem{Lemma}{Lemma}
\newtheorem{Proposition}[Lemma]{Proposition}
\begin{document}
%-------------------------------------------------------
\conferenceinfo{SCG'09,} {June 8--10, 2009, Aarhus, Denmark.} 
\CopyrightYear{2009}
\crdata{978-1-60558-501-7/09/06}

\title{Persistent Cohomology and Circular Coordinates%
}

\numberofauthors{2}
\author{
\alignauthor
Vin de Silva%
\titlenote{VdS has been partially supported by DARPA, through grants HR0011-07-1-0002 and HR0011-05-1-0007.}
\\
  \affaddr{Department of Mathematics}\\
  \affaddr{Pomona College}\\
  \affaddr{Claremont, California}\\
  \email{vin.desilva@pomona.edu}
\alignauthor
Mikael Vejdemo-Johansson%
\titlenote{MVJ has been partially supported by the Office of Naval Research, through grant N00014-08-1-0931.}
\\
  \affaddr{Department of Mathematics}\\
  \affaddr{Stanford University}\\
  \affaddr{Stanford, California}\\
  \email{mik@math.stanford.edu}
}

\maketitle
\begin{abstract}
Nonlinear dimensionality reduction (NLDR) algorithms such as Isomap, LLE and Laplacian Eigenmaps address the problem of representing high-dimensional nonlinear data in terms of low-dimensional coordinates which represent the intrinsic structure of the data. This paradigm incorporates the assumption that real-valued coordinates provide a rich enough class of functions to represent the data faithfully and efficiently. On the other hand, there are simple structures which challenge this assumption: the circle, for example, is one-dimensional but its faithful representation requires two real coordinates. In this work, we present a strategy for constructing circle-valued functions on a statistical data set. We develop a machinery of persistent cohomology to identify candidates for significant circle-structures in the data, and we use harmonic smoothing and integration to obtain the circle-valued coordinate functions themselves. We suggest that this enriched class of coordinate functions permits a precise NLDR analysis of a broader range of realistic data sets.
\end{abstract}

\category{G.3}{Probability and statistics}{multivariate statistics}
\category{I.5.1}{Pattern recognition}{models}[geometric]

\terms{algorithms, theory}

\keywords{dimensionality reduction, computational topology, persistent homology, persistent cohomology}

%-------------------------------------
\section{Introduction}
\label{sec:introduction}

Nonlinear dimensionality reduction (\nldr) algorithms address the following problem: given a high-dimensional collection of data points $X \subset \Rr^N$, find a low-dimensional embedding $\phi: X \to \Rr^n$ (for some $n \ll N$) which faithfully preserves the `intrinsic' structure of the data. For instance, if the data have been obtained by sampling from some unknown manifold $M \subset \Rr^N$ --- perhaps the parameter space of some physical system --- then $\phi$ might correspond to an $n$-dimensional coordinate system on~$M$. If $M$ is completely and non-redundantly parametrized by these $n$~coordinates, then the \nldr\ is regarded as having succeeded completely.

Principal components analysis, or linear regression, is the simplest form of dimensionality reduction; the embedding function~$\phi$ is taken to be a linear projection. This is closely related to (and sometimes identifed with) classical multidimensional scaling~\cite{Cox_Cox_1994}.

When there are no satisfactory linear projections, it becomes necessary to use \nldr. Prominent algorithms for \nldr\ include Locally Linear Embedding~\cite{Roweis_Saul_2000}, Isomap~\cite{Tenenbaum_dS_L_2000}, Laplacian Eigenmaps~\cite{Belkin_Niyogi_2002}, Hessian Eigenmaps~\cite{Donoho_Grimes_2003}, and many more.

These techniques share an implicit assumption that the unknown manifold~$M$ is well-described by a finite set of coordinate functions $\phi_1, \phi_2, \dots, \phi_n: M \to \Rr$. Explicitly, some of the correctness theorems in these studies depend on the hypothesis that $M$ has the topological structure of a convex domain in some~$\Rr^n$. This hypothesis guarantees that good coordinates exist, and shifts the burden of proof onto showing that the algorithm recovers these coordinates.

In this paper we ask what happens when this assumption fails.
The simplest space which challenges the assumption is the circle, which is one-dimensional but requires two real coordinates for a faithful embedding. Other simple examples include the annulus, the torus, the figure eight, the 2-sphere, the last three of which present topological obstructions to being embedded in the Euclidean space of their natural dimension. We propose that an appropriate response to the problem is to enlarge the class of coordinate functions to include circle-valued coordinates $\theta: M \to S^1$. In a physical setting, circular coordinates occur naturally as angular and phase variables. Spaces like the annulus and the torus are well described by a combination of real and circular coordinates. (The 2-sphere is not so lucky, and must await its day.)

The goal of this paper is to describe a natural procedure for constructing circular coordinates on a nonlinear data set using techniques from classical algebraic topology and its 21st-century grandchild, persistent topology. We direct the reader to~\cite{Hatcher_2002} as a general reference for algebraic topology, and to~\cite{Zomorodian_Carlsson_2005} for a streamlined account of persistent homology.

\subsection{Related work}
There have been other attempts to address the problem of finding good coordinate representations of simple non-Euclidean data spaces. One approach~\cite{Pless_Simon_2002} is to use modified versions of multidimensional scaling specifically devised to find the best embedding of a data set into the cylinder, the sphere and so on. The target space has to be chosen in advance. Another class of approaches~\cite{Lee_Verleysen_2004,Dixon_J_P_2006} involves cutting the data manifold along arcs and curves until it has trivial topology. The resulting configuration can then be embedded in Euclidean space in the usual way.
In our approach, the number of circular coordinates is not fixed in advance, but is determined experimentally after a persistent homology calculation. Moreover, there is no cutting involved; the coordinate functions respect the original topology of the data.

%-------------------------------------------------------
\subsection{Overview}

The principle behind our algorithm is the following equation from homotopy theory, valid for topological spaces~$X$ with the homotopy type of a cell complex (which covers everything we normally encounter):
\begin{equation}
\label{eq:main}
[X, S^1] = \Hh^1(X; \Zz)
\end{equation}
The left-hand side denotes the set of equivalence classes of continuous maps from $X$ to the circle~$S^1$; two maps are equivalent if they are homotopic (meaning that one map can be deformed continuously into the other); the right-hand side denotes the 1-dimensional cohomology of~$X$, taken with integer coefficients.
In other language: $S^1$ is the {classifying space} for $\Hh^1$, or equivalently $S^1$ is the Eilenberg--MacLane space $K(\Zz,1)$.
See section~4.3 of~\cite{Hatcher_2002}.

If $X$ is a contractible space (such as a convex subset of~$\Rr^n$), then $\Hh^1(X; \Zz) = 0$ and Equation~\eqref{eq:main} tells us not to bother looking for circular functions: all such functions are homotopic to a constant function. On the other hand, if $X$ has nontrivial topology then there may well exist a nonzero cohomology class $[\alpha] \in \Hh^1(X; \Zz)$; we can then build a continuous function $X \to S^1$ which in some sense reveals $[\alpha]$.

Our strategy divides into the following steps.
\begin{enumerate}
\item
Represent the given discrete data set as a simplicial complex or filtered simplicial complex.
\item
Use persistent cohomology to identify a `significant' cohomology class in the data. For technical reasons, we carry this out with coefficients in the field $\Ff_p$ of integers modulo~$p$, for some prime~$p$. This gives us $[\alpha_p] \in \Hh^1(X; \Ff_p)$.
\item
Lift $[\alpha_p]$ to a cohomology class with integer coefficients: $[\alpha] \in \Hh^1(X; \Zz)$.
\item
Smoothing: replace the integer cocycle~$\alpha$ by a harmonic cocycle in the same cohomology class: $\bar{\alpha} \in \Ch^1(X; \Rr)$.
\item
Integrate the harmonic cocycle $\bar{\alpha}$ to a circle-valued function $\theta: X \to S^1$.
\end{enumerate}

The paper is organized as follows. In Section~\ref{sec:coho}, we derive what we need of equation~\eqref{eq:main}. Steps (1--5) of the algorithm are addressed in Sections \ref{sec:pcd2sc}--\ref{sec:integration}, respectively. In Section~\ref{sec:examples} we report some experimental results.
%

%-------------------------------------------------------
\section{Algorithm Details}
\label{sec:algorithm}

%-------------------------------------------------------

\subsection{Cohomology and circular functions}
\label{sec:coho}

Let $X$ be a finite simplicial complex. Let $X^0, X^1, X^2$ denote the sets of vertices, edges and triangles of~$X$, respectively. We suppose that the vertices are totally ordered (in an arbitrary way). If $a < b$ then the edge between vertices $a,b$ is always written $ab$ and not $ba$. Similarly, if $a < b < c$ then the triangle with vertices $a,b,c$ is always written $abc$.

Cohomology can be defined as follows. Let $\Aa$ be a commutative ring (for example $\Aa = \Zz, \Ff_p, \Rr$). We define 0-cochains, 1-cochains, and 2-cochains as follows:
\begin{eqnarray*}
\Ch^0 \,=\, \Ch^0(X; \Aa) &=&
\left\{
\mbox{functions $f: X^0 \to \Aa$}
\right\}
\\
\Ch^1 \,=\, \Ch^1(X; \Aa) &=&
\left\{
\mbox{functions $\alpha: X^1 \to \Aa$}
\right\}
\\
\Ch^2 \,=\, \Ch^2(X; \Aa) &=&
\left\{
\mbox{functions $A: X^2 \to \Aa$}
\right\}
\end{eqnarray*}
These are modules over~$\Aa$. We now define coboundary maps $d_0: \Ch^0 \to \Ch^1$ and $d_1: \Ch^1 \to \Ch^2$.
\begin{eqnarray*}
(d_0 f) (ab) &=& f(b) - f(a)
\\
(d_1 \alpha) (abc) &=& \alpha(bc) - \alpha(ac) + \alpha(ab)
\end{eqnarray*}

Let $\alpha \in \Ch^1$. If $d_1 \alpha = 0$ we say that $\alpha$ is a \emph{cocycle}. If $d_0 f = \alpha$ admits a solution $f \in \Ch^0$ we say that $\alpha$ is a \emph{coboundary}. The solution~$f$, if it exists, can be thought of as the discrete integral of~$\alpha$. It is unique up to adding constants on each connected component of~$X$.

It is easily verified that $d_1 d_0 f = 0$ for any $f \in \Ch^0$. Thus, coboundaries are always cocycles, or equivalently $\im(d_0) \subseteq \ker(d_1)$. We can measure the difference between coboundaries and cocycles by defining the 1-cohomology of~$X$ to be the quotient module
\[
\Hh^1(X; \Aa) = \ker(d_1) / \im(d_0).
\]
We say that two cocycles $\alpha, \beta$ are \emph{cohomologous} if $\alpha-\beta$ is a coboundary.

%If $\Hh^1 = 0$ then cocycles can always be integrated. Otherwise, there exist cocycles which are not boundaries. 

We now consider integer coefficients. The following proposition fulfils part of the promise of equation~\eqref{eq:main}, by producing circle-valued functions from integer cocycles. It will be helpful to think of $S^1$ as the quotient group $\Rr/\Zz$.

\begin{Proposition}
\label{prop:integer}
Let $\alpha \in \Ch^1(X; \Zz)$ be a cocycle. Then there exists a continuous function $\theta: X \to \Rr/\Zz$ which maps each vertex to 0, and each edge $ab$ around the entire circle with winding number $\alpha(ab)$.
\end{Proposition}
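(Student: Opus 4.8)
The plan is to build $\theta$ one skeleton at a time, deferring all the real content to the step where triangles get filled in.

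\emph{First, the $1$-skeleton.} I would regard each edge $ab$ (recall $a<b$) as a copy of $[0,1]$ with $a$ at parameter $t=0$ and $b$ at $t=1$, and set $\theta(t) = t\,\alpha(ab) \bmod \Zz$ along it. Both endpoints then map to $0$, so the edge definitions match at shared vertices and glue to a continuous map on $X^1$; and, read as a path from $0$ to $0$ in $\Rr/\Zz$, the restriction to $ab$ winds around exactly $\alpha(ab)$ times, which is the behaviour the statement demands on edges.

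\emph{Next, one triangle at a time.} Fix a triangle $abc$ with $a<b<c$. Its boundary is a circle made of the edges $ab$, $bc$, $ac$, and Step~1 already gives a loop $\theta \colon \partial(abc) \to \Rr/\Zz$. I would compute the winding number of this loop by traversing $a \to b \to c \to a$: it is $\alpha(ab) + \alpha(bc) - \alpha(ac)$, the last sign coming from going along $ac$ backwards, and this is precisely $(d_1\alpha)(abc)$, which vanishes because $\alpha$ is a cocycle. A loop of winding number zero lifts through the covering $\Rr \to \Rr/\Zz$ to a genuine (closed-up) loop $\gamma \colon \partial(abc) \to \Rr$; since the triangle is a convex $2$-disk I can extend $\gamma$ over all of it (cone linearly to the barycenter, say, since $\Rr$ is convex) and post-compose with $\Rr \to \Rr/\Zz$ to get $\theta$ on the filled triangle, still restricting to the Step~1 map on the three boundary edges.

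\emph{Finally, gluing.} Carrying this out for every triangle defines $\theta$ on every simplex of dimension $\le 2$; any two closed simplices overlap in a common face lying in the $1$-skeleton, where all the definitions agree with the fixed Step~1 map, so the gluing lemma for the finite closed cover of $X$ by its closed simplices assembles the pieces into a continuous $\theta \colon X \to \Rr/\Zz$ with the stated properties. (If $X$ carries simplices of dimension $\ge 3$, the extension continues with no obstruction since $\pi_n(S^1)=0$ for $n \ge 2$; equivalently one may simply restrict attention to the $2$-skeleton, which already carries all of $\Hh^1$.) The only non-formal step is the winding-number computation in Step~2 — the sole place the hypothesis $d_1\alpha = 0$ is used — which is exactly what makes each triangle's boundary loop null-homotopic in $S^1$ and hence fillable; the rest is routine patching of continuous maps on simplices.
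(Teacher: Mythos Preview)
Your proof is correct and follows essentially the same route as the paper: build $\theta$ skeleton by skeleton, observe that the obstruction to filling each triangle is the winding number $\alpha(ab)+\alpha(bc)-\alpha(ac)=(d_1\alpha)(abc)=0$, and invoke $\pi_n(S^1)=0$ for $n\ge 2$ to handle higher cells. You supply more explicit detail (the formula $t\mapsto t\,\alpha(ab)$ on edges, the lift-and-cone construction on triangles, the gluing lemma), but the argument is the same.
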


\begin{proof}
We can define $\theta$ inductively on the vertices, edges, triangles, \dots\ of~$X$. The vertices and edges follow the prescription in the statement of the proposition. To extend $\theta$ to the triangles, it is necessary that the winding number of $\theta$ along the boundary of each triangle $abc$ is zero. And indeed this is $\alpha(bc) - \alpha(ac) + \alpha(ab) = d_1 \alpha (abc) = 0$. Since the higher homotopy groups of $S^1$ are all zero (\cite{Hatcher_2002}, section~4.3), $\theta$ can then be extended to the higher cells of~$X$ without obstruction.
\end{proof}

The construction in Proposition~\ref{prop:integer} is unsatisfactory in the sense that all vertices are mapped to the same point. All variation in the circle parameter takes place in the interior of the edges (and higher cells). This is rather unsmooth. For more leeway, we consider real coefficients.

\begin{Proposition}
\label{prop:real}
Let $\bar\alpha \in \Ch^1(X; \Rr)$ be a cocycle. Suppose we can find $\alpha \in \Ch^1(X; \Zz)$ and $f \in \Ch^0(X; \Rr)$ such that $\bar{\alpha} = \alpha + d_0 f$. Then there exists a continuous function $\theta: X \to \Rr/\Zz$ which maps each edge $ab$ linearly to an interval of length $\bar\alpha(ab)$, measured with sign.
\end{Proposition}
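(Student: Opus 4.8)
The plan is to follow the same inductive, obstruction-theoretic skeleton as the proof of Proposition~\ref{prop:integer}, except that here the pair $(\alpha, f)$ is used to prescribe the values on vertices and the (now linear) behaviour on edges. Concretely, I would first set $\theta(a) = f(a) \bmod \Zz$ for every vertex $a$, and on each edge $ab$, parametrised by $t \in [0,1]$ running from $a$ to $b$, set $\theta(t) = f(a) + t\,\bar\alpha(ab) \bmod \Zz$.

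The one nontrivial point on the $1$-skeleton is consistency at the endpoints: at $t = 0$ the edge formula gives $f(a) \bmod \Zz$ as required, and at $t = 1$ it gives $f(a) + \bar\alpha(ab) = f(b) + \alpha(ab) \pmod \Zz$, which equals $\theta(b)$ \emph{precisely because} $\alpha(ab) \in \Zz$. This is the only place the hypothesis $\bar\alpha = \alpha + d_0 f$ (with $\alpha$ integral) enters, and I would flag it as the real content of the statement; everything else is a rerun of Proposition~\ref{prop:integer}. In particular, for the extension to triangles the obstruction computation is verbatim the same: the winding number of $\theta$ around the boundary $a \to b \to c \to a$ of a triangle $abc$ is $\bar\alpha(ab) + \bar\alpha(bc) - \bar\alpha(ac) = (d_1 \bar\alpha)(abc) = 0$ since $\bar\alpha$ is a cocycle, so $\theta$ extends over each triangle, and since $\pi_n(S^1) = 0$ for $n \ge 2$ it then extends over all higher cells without obstruction.

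An equivalent and perhaps slicker route, avoiding any re-run of obstruction theory, is to build $\theta$ as a sum in the topological group $\Rr/\Zz$: apply Proposition~\ref{prop:integer} to the integer cocycle $\alpha = \bar\alpha - d_0 f$ to obtain $\theta_\alpha$ --- choosing, as we may, the edge extensions to be linear, $t \mapsto t\,\alpha(ab) \bmod \Zz$ --- linearly interpolate $f$ over the simplices of $X$ (via barycentric coordinates) to a continuous piecewise-linear $\tilde f : X \to \Rr$, and put $\theta = \theta_\alpha + (\tilde f \bmod \Zz)$. Continuity is then automatic, and on edge $ab$ one computes $\theta(t) = t\,\alpha(ab) + (1-t) f(a) + t f(b) = f(a) + t\,\bar\alpha(ab) \pmod \Zz$, the asserted linear map. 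I expect no serious obstacle here --- only mild bookkeeping: orientation conventions (handled by the fixed total order on vertices) and reading ``interval of length $\bar\alpha(ab)$, measured with sign'' as the signed displacement of this linear lift, which may exceed $1$ in absolute value and thus wrap around $S^1$.
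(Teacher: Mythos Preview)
Your proposal is correct and follows essentially the same route as the paper: define $\theta$ on vertices by $f \bmod \Zz$, verify that the endpoint discrepancy on each edge is the integer $\alpha(ab)$ so that a linear edge map of signed length $\bar\alpha(ab)$ is consistent, and then extend over triangles and higher cells using $d_1\bar\alpha = 0$ and the vanishing of higher homotopy of $S^1$. Your alternative construction via $\theta_\alpha + (\tilde f \bmod \Zz)$ is a pleasant extra not in the paper, but the primary argument matches.
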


%\begin{Remark}
In other words, we can construct a circle-valued function out of any real cocycle $\bar\alpha$ whose cohomology class $[\bar\alpha]$ lies in the image of the natural homomorphism $\Hh^1(X; \Zz) \to \Hh^1(X; \Rr)$.
%\end{Remark}

\begin{proof}
Define $\theta$ on the vertices of~$X$ by setting $\theta(a)$ to be $f(a)$ mod~$\Zz$. For each edge $ab$,
we have
\begin{eqnarray*}
\theta(b) - \theta(a) &=& f(b) - f(a)
\\
&=& d_0 f(ab)
\\
&=&
\bar{\alpha}(ab) - \alpha(ab)
%\\
%&\equiv&
%\bar{\alpha}(ab)\qquad\qquad\mbox{mod~$\Zz$}.
\end{eqnarray*}
which is congruent to $\bar{\alpha}(ab)$ mod~$\Zz$, since $\alpha(ab)$ is an integer.

It follows that $\theta$ can be taken to map $ab$ linearly onto an interval of signed length $\bar\alpha(ab)$. Since $\bar\alpha$ is a cocyle, $\theta$ can be extended to the triangles as before; then to the higher cells.
\end{proof}

Proposition~\ref{prop:real} suggests the following tactic: from an integer cocycle~$\alpha$ we construct a cohomologous real cocycle $\bar\alpha = \alpha + d_0 f$, and then define $\theta = f$ mod~$\Zz$ on the vertices of~$X$. If we can construct $\bar\alpha$ so that the edge-lengths $|\bar\alpha(ab)|$ are small, then the behaviour of $\theta$ will be apparent from its restriction to the vertices. See Section~\ref{sec:smooth}.

%-------------------------------------------------------
\subsection{Point-cloud data to simplicial complex}
\label{sec:pcd2sc}

We now begin describing the workflow in detail. The input is a point-cloud data set: in other words, a finite set $S \subset \Rr^N$ or more generally a finite metric space. The first step is to convert $S$ into a simplicial complex and to identify a stable-looking integer cohomology class. This will occupy the next three subsections.

The first lesson of point-cloud topology~\cite{Ghrist_2008AMS} is that point-clouds are best represented by 1-parameter nested families of simplicial complexes. There are several candidate constructions: the Vietoris--Rips complex $X^\epsilon = \Rips(S, \epsilon)$ has vertex set~$S$ and includes a $k$-simplex whenever all $k+1$ vertices lie pairwise within distance $\epsilon$ of each other. The witness complex $X^\epsilon = \Witness(L, S, \epsilon)$ uses a smaller vertex set~$L \subset S$ and includes a $k$-simplex when the $k+1$ vertices lie close to other points of~$S$, in a certain precise sense (see \cite{deSilva_Carlsson_2004,Guibas_Oudot_2007}). In both cases, $X^\epsilon \subseteq X^{\epsilon'}$ whenever $\epsilon \leq \epsilon'$.
Either of these constructions will serve our purposes, but the witness complex has the computational advantage of being considerably smaller.

We determine $X^\epsilon$ only up to its 2-skeleton, since we are interested in $\Hh^1$.

%-------------------------------------------------------
\subsection{Persistent cohomology}
\label{sec:pcoho}

Having constructed a 1-parameter family $\{ X^\epsilon \}$, we apply the principle of persistence to identify cocycles that are stable across a large range for~$\epsilon$. Suppose that $\epsilon_1, \epsilon_2, \dots, \epsilon_m$ are the critical values where the complex $X^\epsilon$ gains new cells. The family can be represented as a diagram
\[
X^{\epsilon_1} \longrightarrow
X^{\epsilon_2} \longrightarrow
\dots \longrightarrow
X^{\epsilon_m}
\]
of simplicial complexes and inclusion maps. For any coefficient field~$\Ff$, the cohomology functor $\Hh^1(-; \Ff)$ converts this diagram into a diagram of vector spaces and linear maps over~$\Ff$; the arrows are reversed:
\[
\Hh^1(X^{\epsilon_1}; \Ff) \longleftarrow
\Hh^1(X^{\epsilon_2}; \Ff) \longleftarrow
\dots \longleftarrow
\Hh^1(X^{\epsilon_m}; \Ff)
\]
According to the theory of persistence~\cite{Edelsbrunner_L_Z_2002,Zomorodian_Carlsson_2005}, such a diagram decomposes as a direct sum of 1-dimensional terms indexed by half-open intervals of the form $[\epsilon_i, \epsilon_j)$. Each such term corresponds to a cochain $\alpha \in \Ch^i(X^\epsilon)$ that satisfies the cocycle condition for $\epsilon < \epsilon_j$ and becomes a coboundary for $\epsilon < \epsilon_i$.
The collection of intervals can be displayed graphically as a \emph{persistence diagram}, by representing each interval $[\epsilon_i,\epsilon_j)$ as a point $(\epsilon_i,\epsilon_j)$ in the Cartesian plane above the main diagonal.
We think of long intervals as representing trustworthy (i.e.\ stable) topological information.

\smallskip
\textsc{Choice of coefficients.}
The persistence decomposition theorem applies to diagrams of vector spaces over a field. When we work over the ring of integers~$\Zz$, however, the result is known to fail: there need not be an interval decomposition. This is unfortunate, since we require integer cocycles to construct circle maps. To finesse this problem, we pick an arbitrary prime number~$p$ (such as $p=47$) and carry out our persistence calculations over the finite field $\Ff = \Ff_p$. The resulting $\Ff_p$ cocyle must then be converted to integer coefficients: we address this in Section~\ref{sec:lift}.
\medskip

In principle we can use the ideas in~\cite{Zomorodian_Carlsson_2005} to calculate the persistent cohomology intervals and then select a long interval $[\epsilon_i, \epsilon_j)$ and a specific $\delta \in [\epsilon_i, \epsilon_j)$. We then let $X = X^\delta$ and take $\alpha$ to be the cocycle in $\Ch^1(X; \Ff)$ corresponding to the interval. 

Explicitly, persistent cocycles can be calculated in the following way.
We thank Dmitriy Morozov for this algorithm.
Suppose that the simplices in the filtered complex are totally ordered, and labelled $\sigma_1, \sigma_2, \dots, \sigma_m$ so that $\sigma_i$ arrives at time~$\epsilon_i$. For $k = 0, 1, \dots, m$ we maintain the following information:
\begin{itemize}
\item
a set of indices $I_k \subseteq \{ 1, 2, \dots, k\}$ associated with `live' cocycles;
\item
a list of cocycles $( \alpha_i : i \in I_k)$ in $\Ch^*(X^{\epsilon_k}; \Ff)$.
\end{itemize}
The cocycle $\alpha_i$ involves only $\sigma_i$ and those simplices of the same dimension that appear later in the filtration sequence (thus only $\sigma_j$ with $j \geq i$).

Initially $I_0 = \emptyset$ and the list of cocycles is empty.

To update from $k-1$ to $k$, we compute the coboundaries of the cocycles $( \alpha_i : i \in I_{k-1} )$ of~$X^{\epsilon_{k-1}}$ within the larger complex $X^{\epsilon_{k}}$ obtained by including the simplex $\sigma_k$. In fact, these coboundaries must be multiples of the elementary cocycle $\alpha = [\sigma_k]$ defined by $\alpha(\sigma_k) =1$, and $\alpha(\sigma_j) = 0$ otherwise. We can write $d\alpha_i = c_i [\sigma_k]$. If all the $c_i$ are zero, then we have one new cocycle: let $I_k = I_{k-1} \cup \{  k\}$ and define $\alpha_k = [\sigma_k]$. Otherwise, we must lose a cocycle. Let $j \in I_{k-1}$ be the largest index for which $c_j \ne 0$. We delete $\alpha_j$ by setting $I_k = I_{k-1} \setminus \{ j\}$, and we restore the earlier cocycles by setting $\alpha_i \leftarrow \alpha_i - (c_i/c_j)\alpha_j$. In this latter case, we write the persistence interval $[\epsilon_j, \epsilon_k)$ to the output.

At the end of the process, surviving cocycles are associated with semi-infinite intervals: $[\epsilon_i, \infty)$ for $i \in I_m$.

\smallskip
\textsc{Remark.}
The reader may be more familiar with persistence diagrams in homology rather than cohomology. In fact, the universal coefficient theorem~\cite{Hatcher_2002} implies that the two diagrams are identical. The salient point is that cohomology is the vector-space dual of homology, when working with field coefficients. That said, we cannot simply use the usual algorithm for persistent homology: we are interested in obtaining explicit cocycles, whereas the classical algorithm~\cite{Zomorodian_Carlsson_2005} returns cycles.
\smallskip

We will establish the correctness of this algorithm in the archival version of this paper. The expert reader may regard this as an exercise in the theory of persistence.

%-------------------------------------------------------
\subsection{Lifting to integer coefficients}
\label{sec:lift}

%The interval description in persistent homology and cohomology is valid only for field coefficients. However, what we need is a stable integer cocycle. Accordingly, we pick an arbitrary prime number~$p$ (such as $p=47$) and carry out the calculation on Section~\ref{sec:pcoho} with $\Ff = \Ff_p$. 

We now have a simplicial complex $X = X^\delta$ and a cocycle $\alpha_p \in \Ch^1(X; \Ff_p)$. The next step is to `lift' $\alpha_p$ by constructing an integer cocycle~$\alpha$ which reduces to $\alpha_p$ modulo~$p$.

To show that this is (almost) always possible, note that the short exact sequence of coefficient rings
$
0 \longrightarrow
\Zz \stackrel{\cdot p}{\longrightarrow}
\Zz \longrightarrow
\Ff_p \longrightarrow
0
$
gives rise to a long exact sequence, called the Bockstein sequence (see Section 3.E of~\cite{Hatcher_2002}). Here is the relevant section of the sequence:
%\[
%\begin{array}{cccccc}
%%\dots
%&\longrightarrow
%&\Hh^1(X; \Zz)
%&\longrightarrow
%&\Hh^1(X; \Ff_p)
%&\stackrel{\beta}{\longrightarrow}
%\\\\
%\Hh^2(X; \Zz)
%&\stackrel{\cdot p}{\longrightarrow}
%&\Hh^2(X; \Zz)
%&\longrightarrow
%&%\dots
%\end{array}
%\]
%
\[
\to \Hh^1(X; \Zz) \to \Hh^1(X; \Ff_p)
\stackrel{\beta}{\to}
\Hh^2(X; \Zz) \stackrel{\cdot p}{\to} \Hh^2(X; \Zz) \to
\]
By exactness, the Bockstein homomorphism~$\beta$ induces an isomorphism between the cokernel of $\Hh^1(X; \Zz) \to \Hh^1(X; \Ff_p)$ and the kernel of $\Hh^2(X; \Zz) \stackrel{\cdot p}{\to} \Hh^2(X; \Zz)$, and this kernel is precisely the set of $p$-torsion elements of $\Hh^2(X; \Zz)$. If there is no $p$-torsion, then it follows immediately that the cokernel of the first map is zero. In other words $\Hh^1(X; \Zz) \to \Hh^1(X; \Ff_p)$ is surjective; any cocycle $\alpha_p \in \Ch^1(X; \Ff_p)$ can be lifted to a cocycle $\alpha \in \Ch^1(X; \Zz)$.

If we are unluckily sabotaged by $p$-torsion, then we pick another prime and redo the calculation from scratch: it is enough to pick a prime that does not divide the order of the torsion subgroup of $\Hh^2(X; \Zz)$, so almost any prime will do.

In practice, we construct $\alpha$ by taking the coefficients of $\alpha_p$ in $\Ff_p$ and replacing them with integers in the correct congruence class modulo~$p$. The default choice is to choose coefficients close to zero. If $d_1 \alpha = 0$ then we are done; otherwise it becomes necessary to do some repair work. Certainly $d_1\alpha \equiv 0$ modulo~$p$, so we can write $d_1 \alpha = p\eta$ for some $\eta\in \Ch^2(X; \Zz)$. In the absence of $p$-torsion, we can then solve $\eta = d_1 \zeta$ for $\zeta \in \Ch^1(X; \Zz)$, and then the required lift is $\alpha - p \zeta$. Fortunately, this has not proved necessary in any of our examples.

\smallskip
\textsc{Remark.}
We expect that $p$-torsion is extremely rare in `real' data sets, since it is symptomatic of rather subtle topological phenomena. For instance, the simplest examples which exhibit 2-torsion are the nonorientable closed surfaces (such as the projective plane and the Klein bottle).
\smallskip

%-------------------------------------------------------
\subsection{Harmonic smoothing}
\label{sec:smooth}

Given an integer cocycle $\alpha \in \Ch^1(X; \Zz)$, or indeed a real cocycle $\alpha \in \Ch^1(X; \Rr)$, we wish to find the `smoothest' real cocycle $\bar\alpha \in \Ch^1(X; \Rr)$ cohomologous to~$\alpha$. It turns out that what we want is the harmonic cocycle representing the cohomology class $[\alpha]$.

We define smoothness. Each of the spaces $\Ch^i(X; \Rr)$ comes with a natural Euclidean metric:
\begin{eqnarray*}
\| f \|^2 &=& \sum_{a \,\in X^0} |f(a)|^2,
\\
\| \alpha \|^2 &=& \sum_{ab \,\in X^1} |\alpha(ab)|^2,
\\
\| A \|^2 &=& \sum_{abc \,\in X^2} |A(abc)|^2.
\end{eqnarray*}
A circle-valued function $\theta$ is `smooth' if its total variation across the edges of~$X$ is small. The terms $| \alpha(ab)|^2$ capture the variation across individual edges; therefore what we must minimize is $\| \bar\alpha\|^2$.

\begin{Proposition}
Let $\alpha \in \Ch^1(X; \Rr)$. There is a unique solution $\bar\alpha$ to the least-squares minimization problem
\begin{equation}
\label{eq:lsqr}
\mathop{\rm argmin}_{\bar\alpha} 
\left\{ 
\| \bar\alpha \|^2
\mid
\exists f \in \Ch^0(X; \Rr),\, \bar\alpha = \alpha + d_0 f
\right\}.
\end{equation}
Moreover, $\bar\alpha$ is characterized by the equation $d_0^*\,\bar \alpha = 0$, where $d_0^*$ is the adjoint of $d_0$ with respect to the inner products on $\Ch^0, \Ch^1$.
\end{Proposition}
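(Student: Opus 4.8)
The plan is to read \eqref{eq:lsqr} as a linear least-squares problem and dispatch it with the orthogonal projection theorem, all inside the finite-dimensional inner-product space $\Ch^1(X;\Rr)$. First I would note that the feasible set is the affine subspace $V = \alpha + \im(d_0)$: it is nonempty since it contains $\alpha$ (take $f=0$), and it is a translate of the linear subspace $\im(d_0)$, hence closed and convex. The objective $\bar\alpha \mapsto \|\bar\alpha\|^2$ is the squared distance from the origin of $\Ch^1$, which is strictly convex and coercive, so it attains its minimum over $V$ at exactly one point. That already proves existence and uniqueness of $\bar\alpha$. One should keep in mind that the auxiliary potential $f$ is \emph{not} unique --- it is determined only up to $\ker(d_0)$, i.e.\ up to constants on each connected component --- but the cocycle $\bar\alpha = \alpha + d_0 f$ is.

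For the characterization I would invoke the projection lemma: $\bar\alpha$ is the point of $V$ nearest the origin if and only if the residual $-\bar\alpha$ is orthogonal to the direction space $\im(d_0)$, that is, $\langle \bar\alpha, d_0 g\rangle = 0$ for every $g \in \Ch^0(X;\Rr)$. Passing $d_0$ to the other side of the inner product, this reads $\langle d_0^*\bar\alpha, g\rangle = 0$ for all $g$, which is equivalent to $d_0^*\bar\alpha = 0$. Alternatively, and just as quickly, one can differentiate: with $g(f) = \|\alpha + d_0 f\|^2$ one has $\nabla g(f) = 2\,d_0^*(\alpha + d_0 f)$, so the stationarity condition at the minimum is precisely $d_0^*\bar\alpha = 0$; the resulting normal equations $d_0^* d_0 f = -d_0^*\alpha$ are always consistent because $\im(d_0^* d_0) = \im(d_0^*)$ (both equal $(\ker d_0)^{\perp}$ in $\Ch^0$, comparing dimensions via $\ker(d_0^*d_0) = \ker(d_0)$) and $d_0^*\alpha$ lies in that space. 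If desired, uniqueness can also be recovered from the characterization alone: two feasible solutions of $d_0^*(\,\cdot\,) = 0$ differ by an element of $\im(d_0) \cap \ker(d_0^*) = \im(d_0) \cap \im(d_0)^{\perp} = \{0\}$.

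There is no real obstacle here: the statement is a textbook application of least-squares and orthogonal projection in a finite-dimensional space, and the finite-dimensionality removes any need to worry about closedness or completeness. The only points worth stating carefully are the distinction between the unique minimizer $\bar\alpha$ and the non-unique potential $f$ flagged above, and the observation that the proposition does not require $\alpha$ itself to be a cocycle --- but if it is, then since $\im(d_0) \subseteq \ker(d_1)$ the minimizer additionally satisfies $d_1\bar\alpha = 0$, so $\bar\alpha \in \ker(d_1) \cap \ker(d_0^*)$ is genuinely harmonic and represents the class $[\alpha] \in \Hh^1(X;\Rr)$, which is the use we will make of it.
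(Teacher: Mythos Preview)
Your argument is correct and follows essentially the same least-squares/orthogonal-projection route as the paper; the only cosmetic difference is the order of presentation. The paper first verifies that any $\bar\alpha$ with $d_0^*\bar\alpha=0$ is the unique minimizer by expanding $\|\bar\alpha+d_0 f\|^2$, and then establishes existence via the normal equations $d_0^*d_0 f=-d_0^*\alpha$ using $\im(d_0^*)=\im(d_0^*d_0)$, whereas you obtain existence and uniqueness up front from the projection theorem and then read off the orthogonality condition---same content, and your remarks on the non-uniqueness of $f$ and on the cocycle case are apt additions.
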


\begin{proof}
Note that if $d_0^*\, \bar\alpha = 0$ then for any $f \in \Ch^0$ we have
\begin{eqnarray*}
\| \bar\alpha + d_0 f \|^2
&=&
\| \bar\alpha \|^2 + 2 \pairing{\bar\alpha}{d_0 f} + \| d_0 f \|^2
\\
&=&
\| \bar\alpha \|^2 + 2 \pairing{d_0^*\,\bar\alpha}{f} + \| d_0 f \|^2
\\
&=& 
\| \bar\alpha \|^2 + \| d_0 f \|^2
\end{eqnarray*}
which implies that such an $\bar\alpha$ must be the unique minimizer.
For existence, note that
\[
d_0^*\, \alpha + d_0^*\,d_0 f = 0
\]
certainly has a solution~$f$ if $\im(d_0^*) = \im(d_0^*\,d_0)$. But this is a standard fact in finite-dimensional linear algebra: $\im(A^\text{\sc t}) = \im(A^\text{\sc t} A)$ for any real matrix~$A$; this follows from the singular value decomposition, for instance.
\end{proof}

\smallskip
\textsc{Remark.}
It is customary to construct the Laplacian $\Delta = d_1^*\,d_1 + d_0\,d_0^*$. The twin equations $d_1\,\bar\alpha = 0$ and $d_0^*\,\bar\alpha=0$ immediately imply (and conversely, can be deduced from) the single equation $\Delta \bar\alpha =0$; in other words $\bar\alpha$ is \emph{harmonic}.
\smallskip

%-------------------------------------------------------
\subsection{Integration}
\label{sec:integration}

The least-squares problem in equation~\eqref{eq:lsqr} can be solved using a standard algorithm such as LSQR~\cite{Paige_Saunders_1982a}. By Proposition~\ref{prop:real} we can use the solution parameter~$f$ to define the circular coordinate $\theta$ on the vertices of~$X$. This works because the original cocycle $\alpha$ has integer coefficients.

More generally, if $\bar\alpha$ is an arbitrary real cocycle such that $[\bar\alpha]  \in \im(\Hh^1(X; \Zz) \to \Hh^1(X; \Rr))$, it is a straightforward matter to integrate $\bar\alpha$ to a circle-valued function $\theta$ on the vertex set~$X^0$. Suppose that $X$ is connected (if not, each connected component can be treated separately) and pick a starting vertex~$x_0$ and assign~$\theta(x_0)=0$. One can use Dijkstra's algorithm to find shortest paths to each remaining vertex from~$x_0$. When a new vertex $b$ enters the structure via an edge $ab$, we assign $\theta(b) = \theta(a) + \bar\alpha(ab)$ (or $\theta(a) - \bar\alpha(ba)$ if the edge is correctly identified as $ba$). If a vertex $a$ is connected to $x_0$ by multiple paths then the different possible values of $\theta(a)$ differ by an integer; this is where we use the hypothesis that $\bar\alpha$ is cohomologous to an integer cocyle.

%-------------------------------------------------------
\section{Experiments}
\label{sec:examples}

%-------------------------------------------
\subsection{Software}
The following experiments were carried out using the Java-based jPlex simplicial complex software~\cite{jPlex_2008}, with high-level scripting and numerical analysis in MATLAB. We ran a development version of jPlex to obtain explicit persistent cohomology cocycles. We expect to include the code in the next release of jPlex. We used Paige and Saunders' implementation of LSQR~\cite{LSQR_2007} for the least-squares problem in the harmonic smoothing step. Timings were determined using MATLAB's built-in `tic' and `toc' commands, and are included for relative comparison against each other.

%-------------------------------------------
\subsection{General procedure}

We tested our methods on several synthetic data sets with known topology, ranging from the humble circle itself to a genus-2 surface (`double torus'). Most of the examples were embedded in $\Rr^2$ or~$\Rr^3$, with the exception of a sample from a complex projective curve (embedded in $\Cc P^2$) and a synthetic image-like data set (embedded in $\Rr^{120000}$).

In each case we selected vertices for the filtered simplicial complex: either the whole set, or a smaller well-distributed subset of `landmarks' selected by iterative furthest-point sampling. We then built a Rips or witness complex, with maximum radius generally chosen to ensure around $10^5$ simplices in the complex.

In most cases, we show the persistence diagram produced by the cocycle computation. The chosen value~$\delta$ is marked on the diagonal, with its upper-left quadrant indicated in green lines. The persistent cocycles available at that parameter value are precisely those contained in that quadrant. Each of those cocycles then produces a circular coordinate.

There are various figures associated with each example. Most important are the correlation scatter plots: each scatter plot compares two circular coordinate functions. These may be functions produced by the computation (`inferred coordinates') or known parameters. These scatter plots are drawn in the unit square, which is of course really a torus $S^1 \times S^1$.

When the original data are embedded in $\Rr^2$ or~$\Rr^3$, we also display the circular coordinates directly on the data set, plotting each point in color according to its coordinate value interpreted on the standard hue-circle. This works less well in grayscale reproductions, of course.

Finally, in certain cases we plot coordinate values against frequency, as a histogram. This distributional information can sometimes be useful in the absence of other information.

\smallskip
\textsc{Remark.} When the goal is to infer the topology of a data set whose structure is unknown, we do not have any `known parameters' available to us. We can still construct correlation scatter plots between pairs of inferred coordinates, and the distributional histograms for each coordinate individually. We exhort the reader to view the following examples through the lens of the topological inference problem: what structures can be distinguished using scatter plots and histograms (and persistence diagrams) alone?
\smallskip

%-------------------------------------------
\subsection{Noisy circle}
\label{sec:noisy-circle}

We begin with the circle itself, and its tautological circle-valued coordinate. 

We picked 400 points distributed along the unit circle. We added a uniform
random variable from $[0.0,0.4]$ to each coordinate. A Rips complex was
constructed with maximal radius 0.5, resulting in 23475 simplices. The
computation of cohomology finished in 237 seconds.

Parametrizing at 0.4 yielded a single coordinate function, which very closely reproduces the tautological angle function. Parametrizing at 0.14 yielded several possible cocycles. We selected one of those with low persistence; this produced a parametrization which `snags' around a small gap in the data.

See Figure~\ref{fig:noisyCircle}.
\begin{figure*}
  \centering
  \begin{minipage}{0.3\linewidth}
    \includegraphics[width=\linewidth]{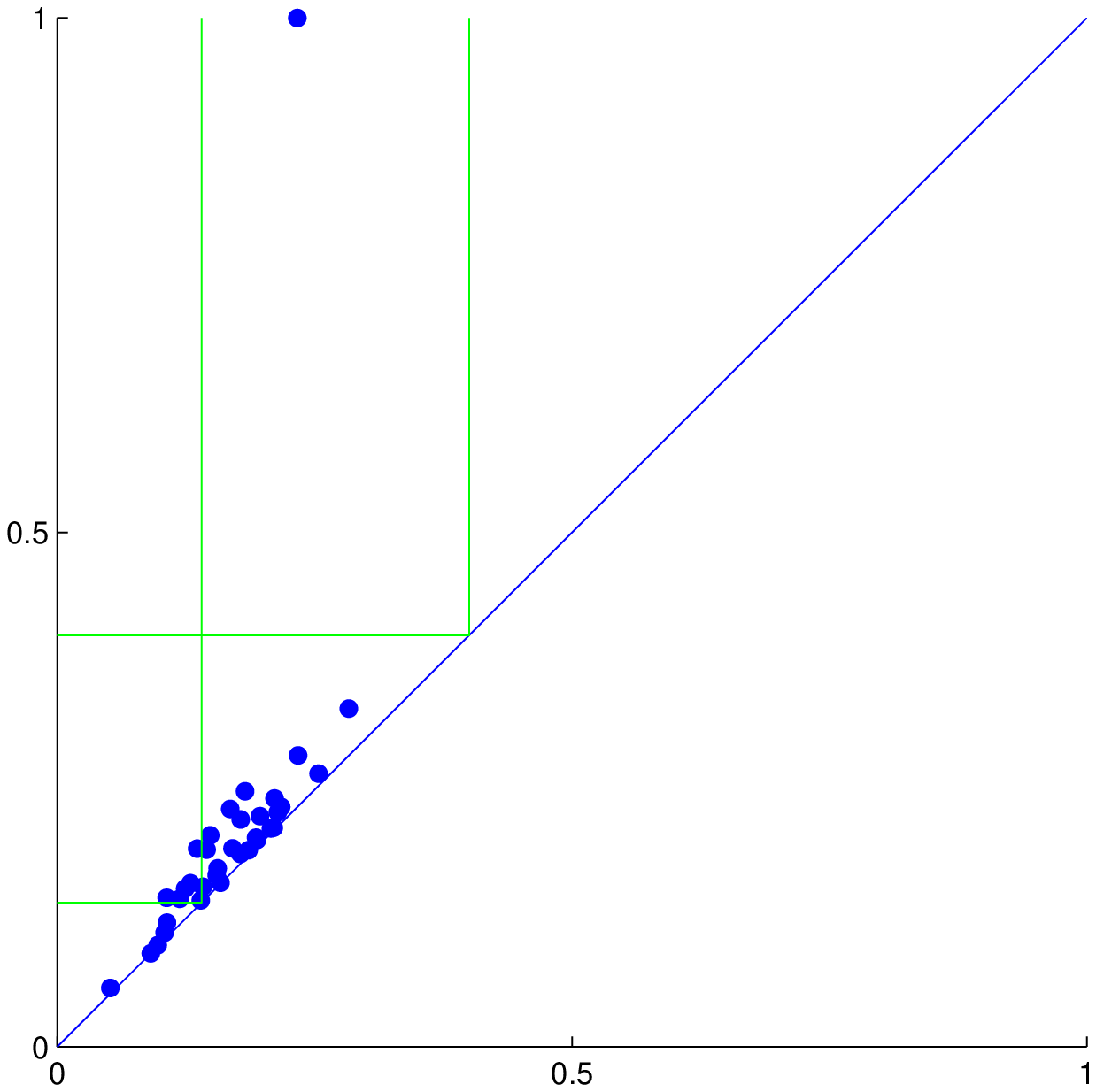}
  \end{minipage}
  \begin{minipage}{0.65\linewidth}
    \includegraphics[width=0.3\linewidth]{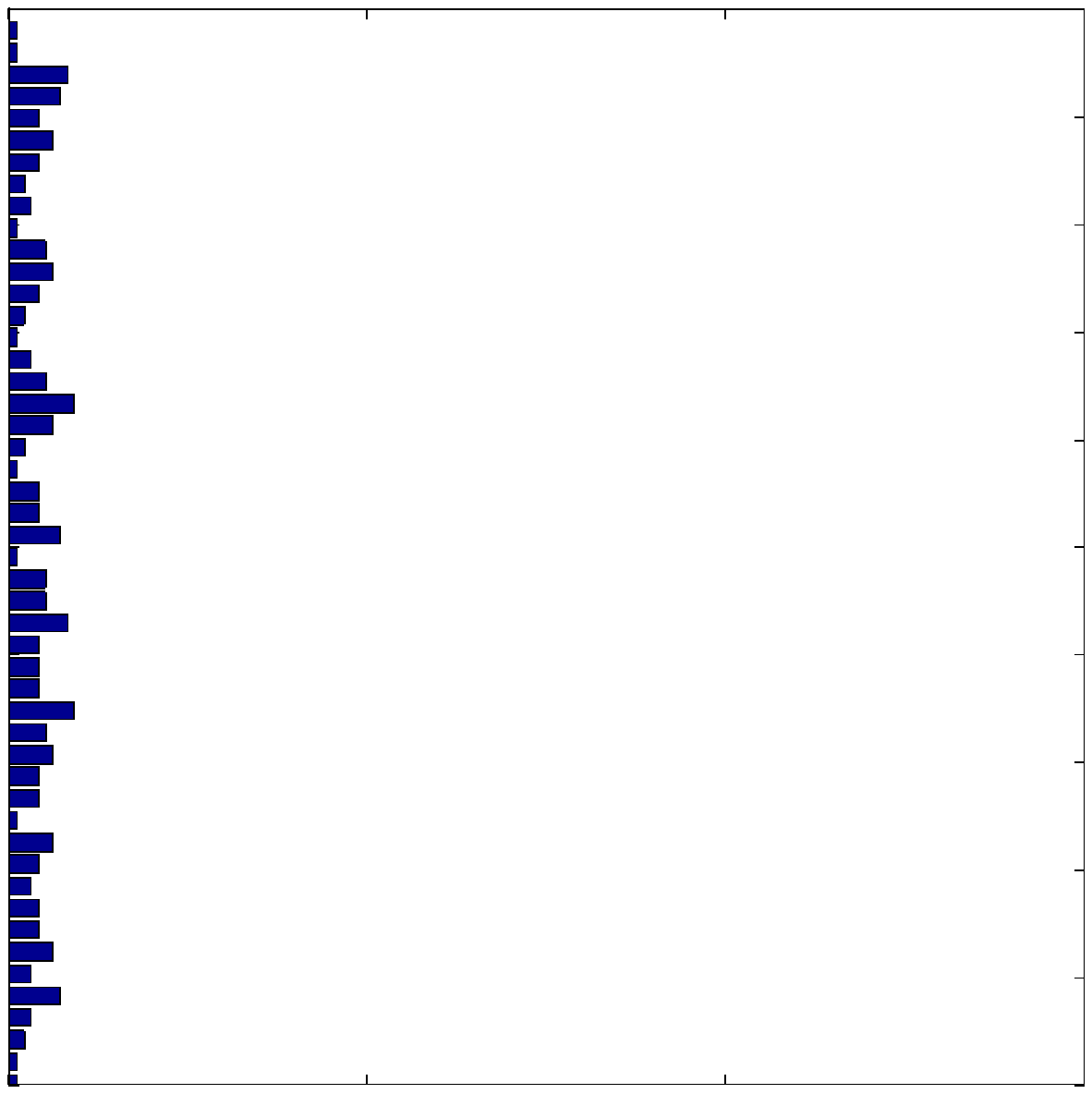}
    \includegraphics[width=0.3\linewidth]{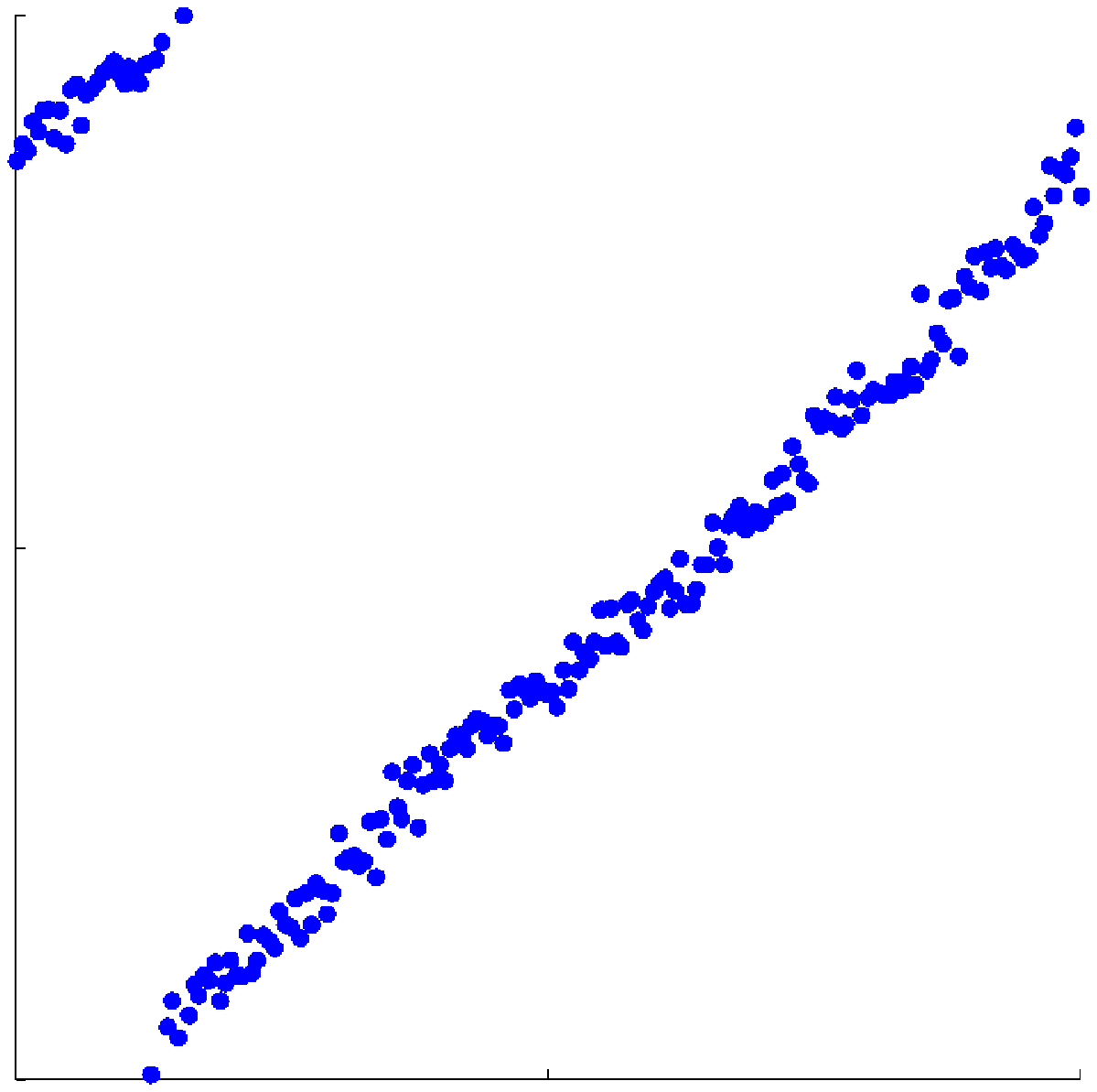}
    \includegraphics[width=0.33\linewidth]{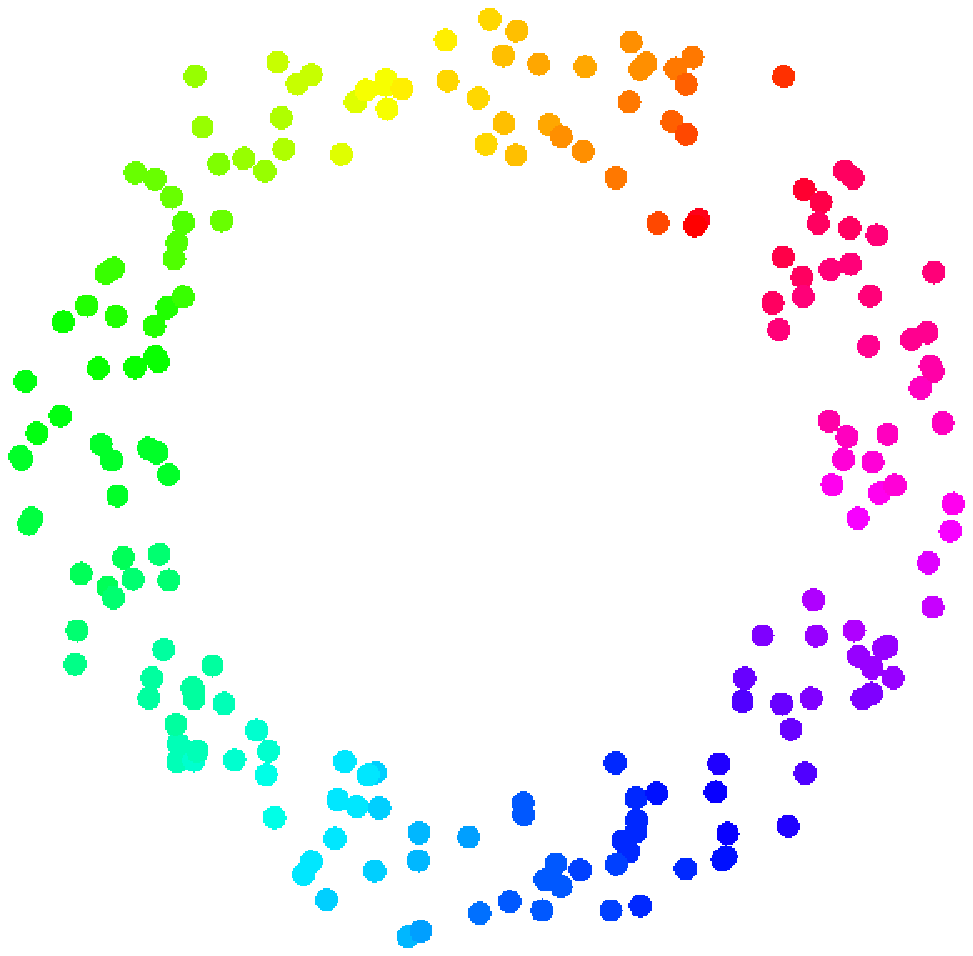}
    \\
    \includegraphics[width=0.3\linewidth]{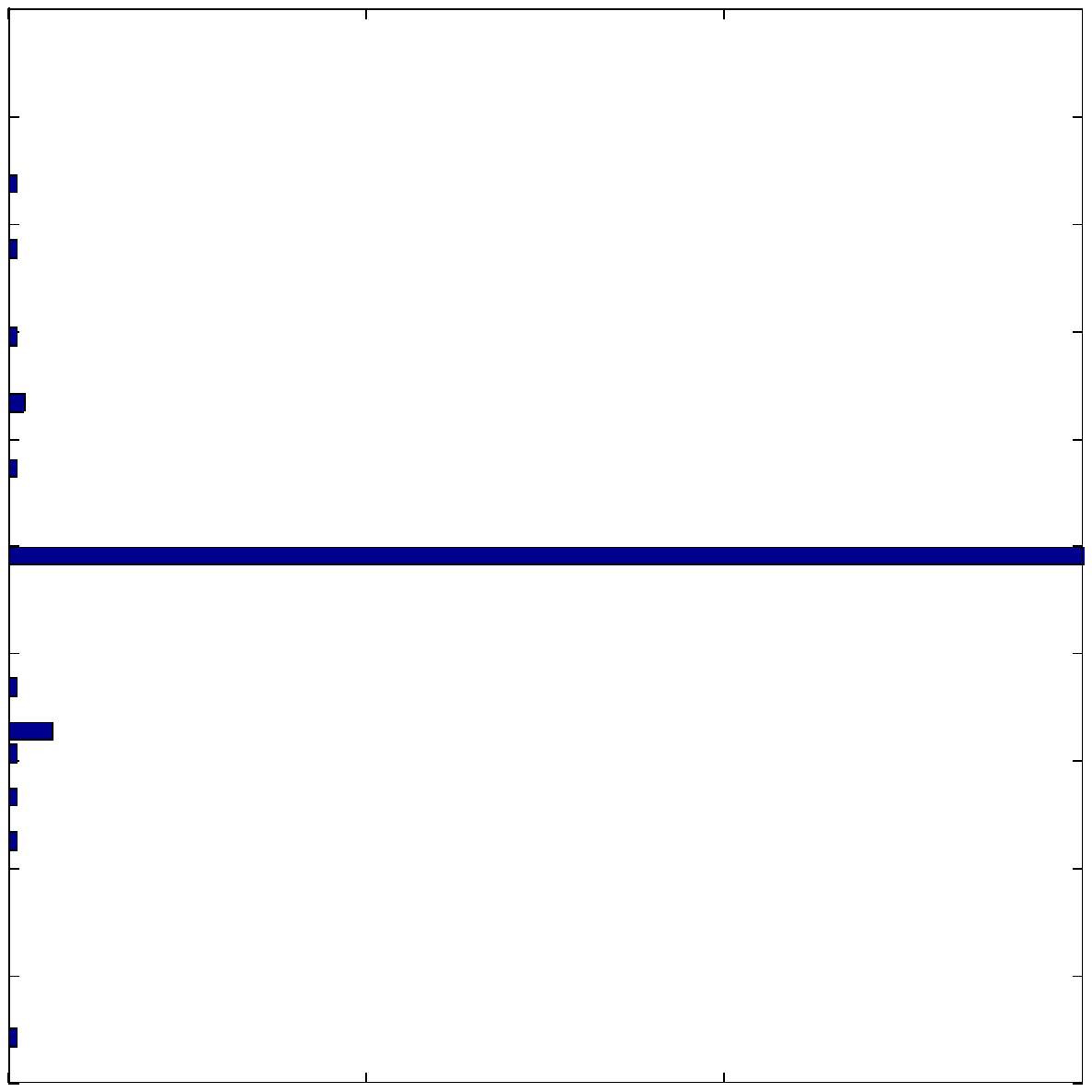}
    \includegraphics[width=0.3\linewidth]{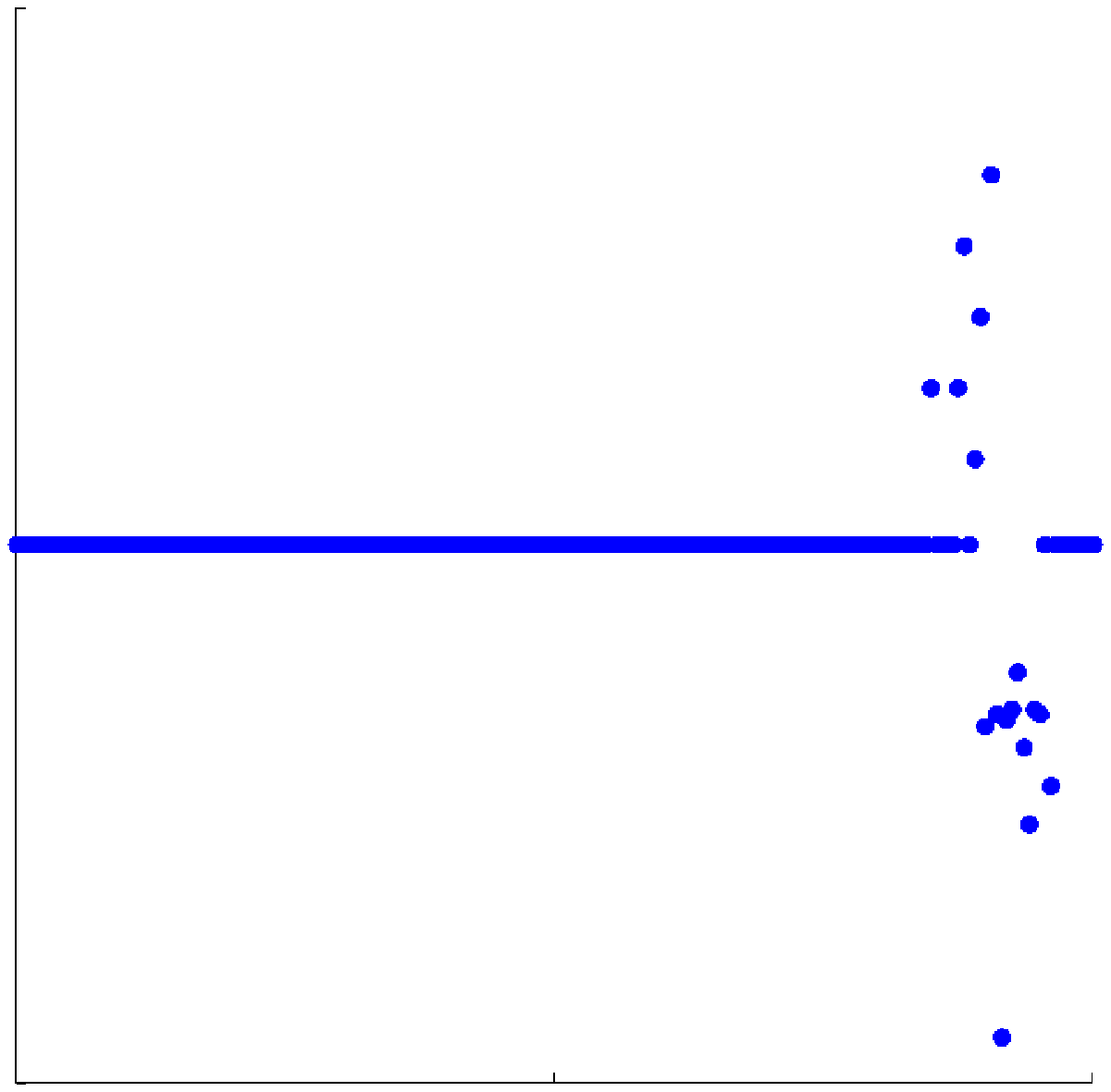}
    \includegraphics[width=0.33\linewidth]{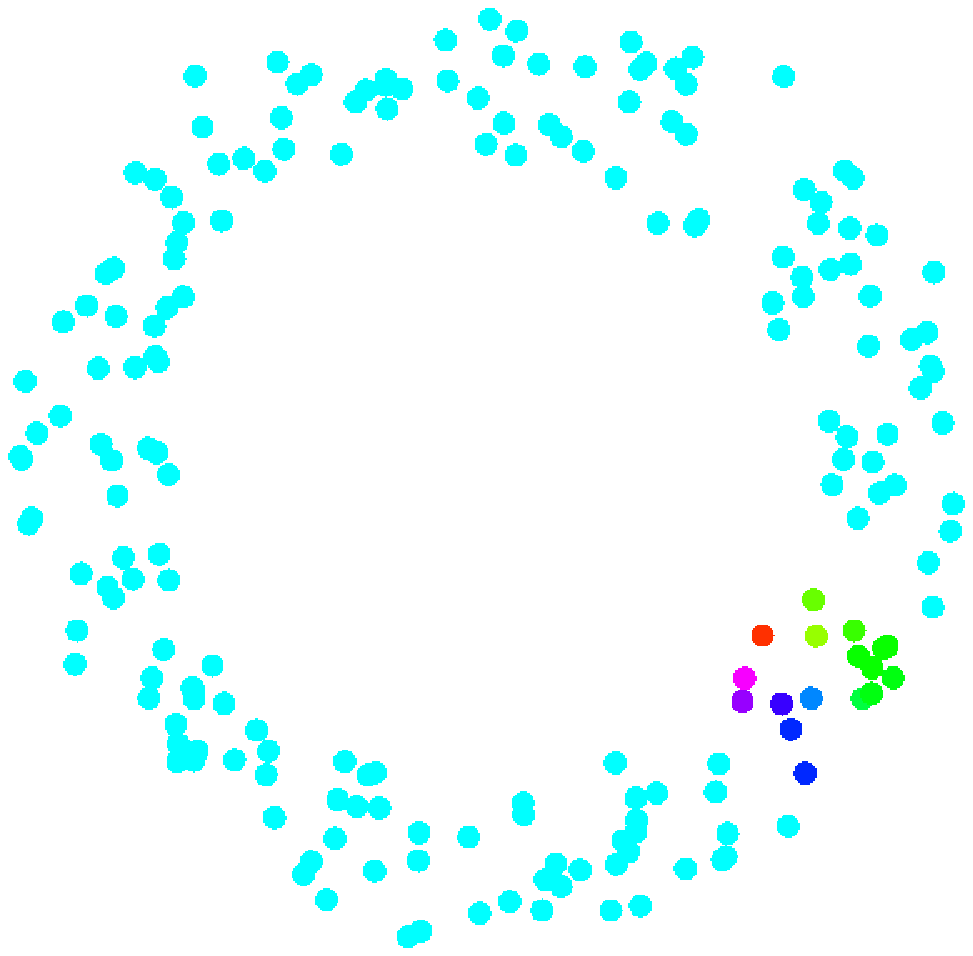}
  \end{minipage}
  \caption{Noisy circle. Persistence diagram (left). Global coordinate (top row), local coordinate (bottom row). In each row: histogram of coordinate values (left), correlation scatter plot against known angle function (middle), inferred coordinate in color (right).}
  \label{fig:noisyCircle}
\end{figure*}
The left panel in each row shows the histogram of coordinate values; the middle panel shows the correlation scatter plot against the known angle function; the right panel displays the coordinate using color. The high-persistence (`global') coordinate correlates with the angle function with topological degree~1. Variation in that coordinate is uniformly distributed, as seen in the histogram. In contrast, the low-persistence (`local') coordinate has a spiky distribution.

%-------------------------------------------
\subsection{Trefoil torus knot}
\label{sec:trefoil-torus-knot}

Another example with circle topology: see Figure~\ref{fig:trefoil}.
We picked 400 points distributed along the $(2,3)$ torus knot on a torus
with radii 2.0 and 1.0. We jittered them by a uniform random variable
from $[0.0,0.2]$ added to each coordinate. We generated a Rips complex up to radius~1.0, acquiring 36936 simplices. We computed persistent
cohomology in 70 seconds.
\begin{figure*}
  \centering
%  \subfigure[Trefoil persistence diagram]
  \includegraphics[width=0.25\linewidth]{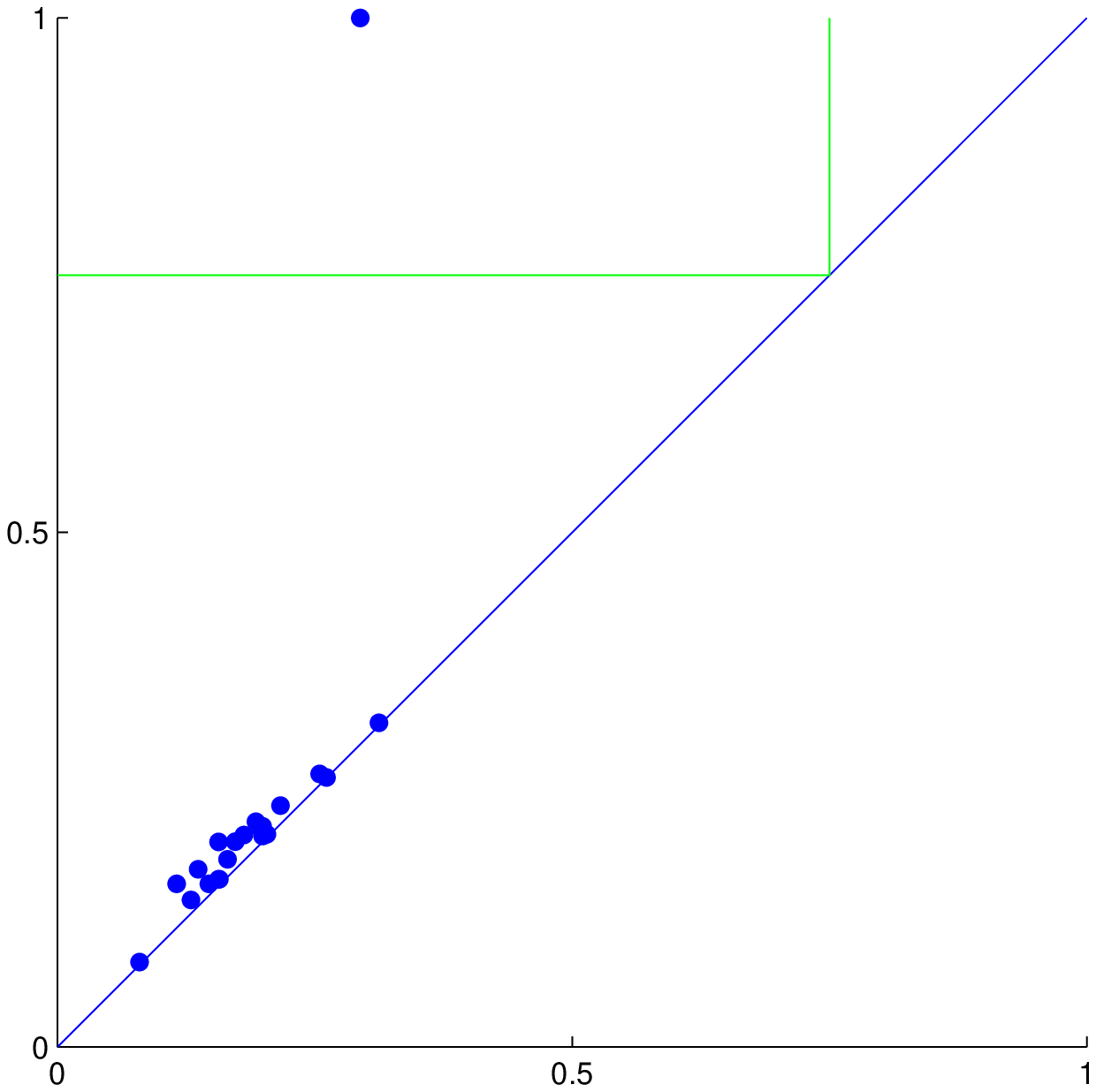}
%  \subfigure[Trefoil correlation plot with known original coordinate]
  \includegraphics[width=0.25\linewidth]{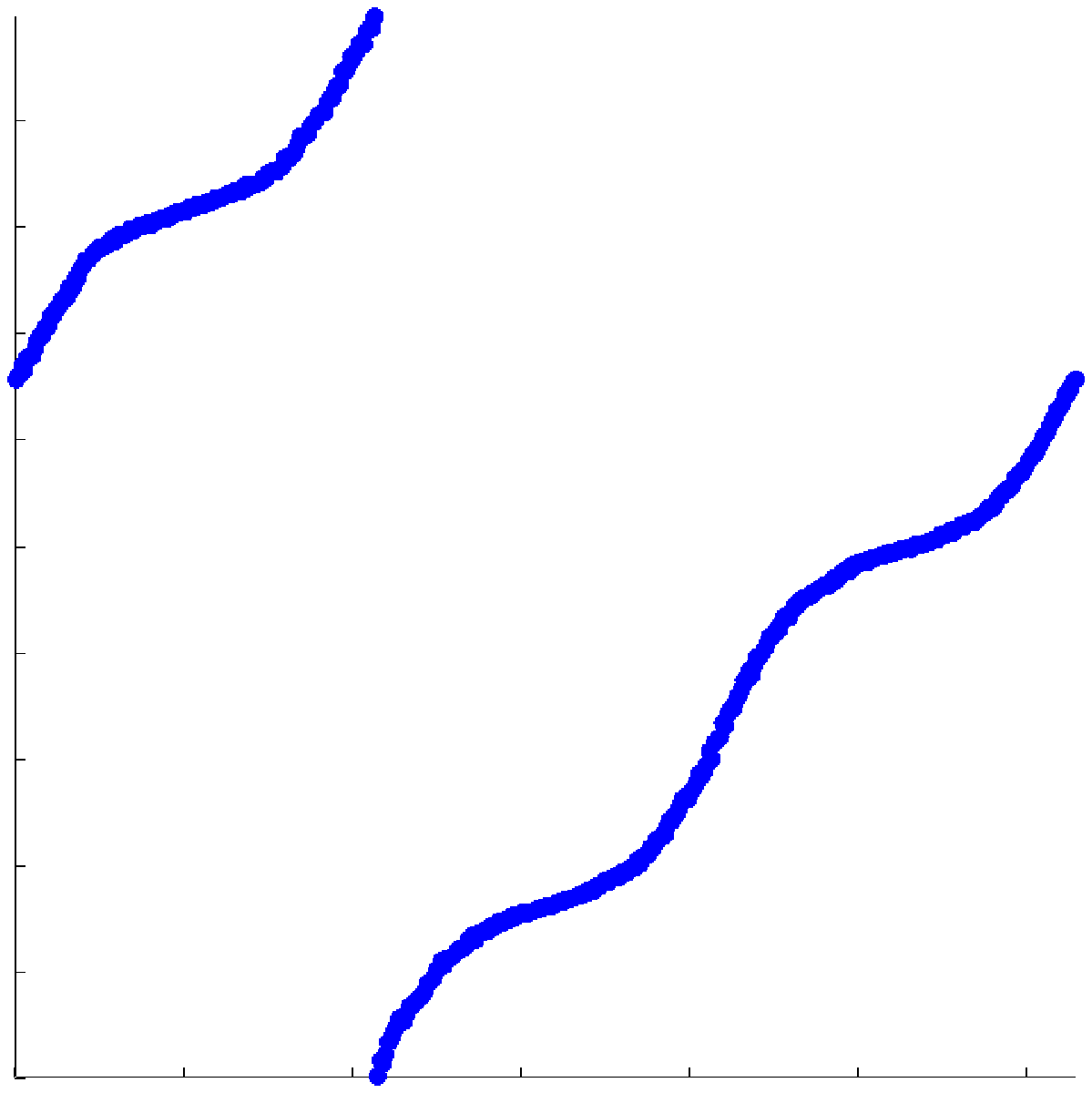}
%  \subfigure[Trefoil parametrization]
  \includegraphics[width=0.3\linewidth]{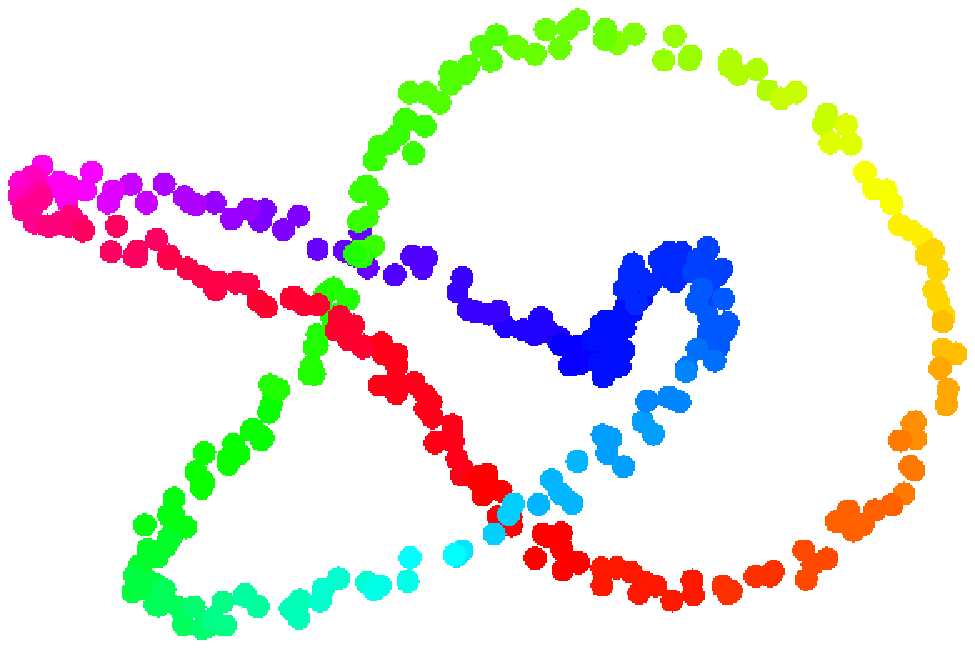}
  \caption{Trefoil torus knot. Persistence diagram (left), correlation scatter plot of inferred coordinate against known parametrization (middle), inferred coordinate in color (right).}
  \label{fig:trefoil}
\end{figure*}
As expected, the inferred coordinate correlates strongly with the known parameter with topological degree~1. The histogram shows three `bulges' corresponding to the three high-density regions of the sampled curve, which occur when the curve approaches the central axis of the torus.

%-------------------------------------------
\subsection{Rotating cube}
\label{sec:rotatingCube}

For a more elaborate data set with $S^1$-topology, we generated a sequence of 657 rendered images of a colorful cube rotating around one axis. Each image was regarded as a vector in the Euclidean space $\mathbb R^{200\cdot200\cdot3}$. From this data we built a witness complex with 50 landmark points and constructed a single circular coordinate.
Interpolating the resulting function linearly between the landmarks gave us coordinates for all the points in the family.

\begin{figure*}
\centering
\parbox{0.90\linewidth}{
	\raisebox{0.45in}{
	\includegraphics[width=0.25\linewidth]{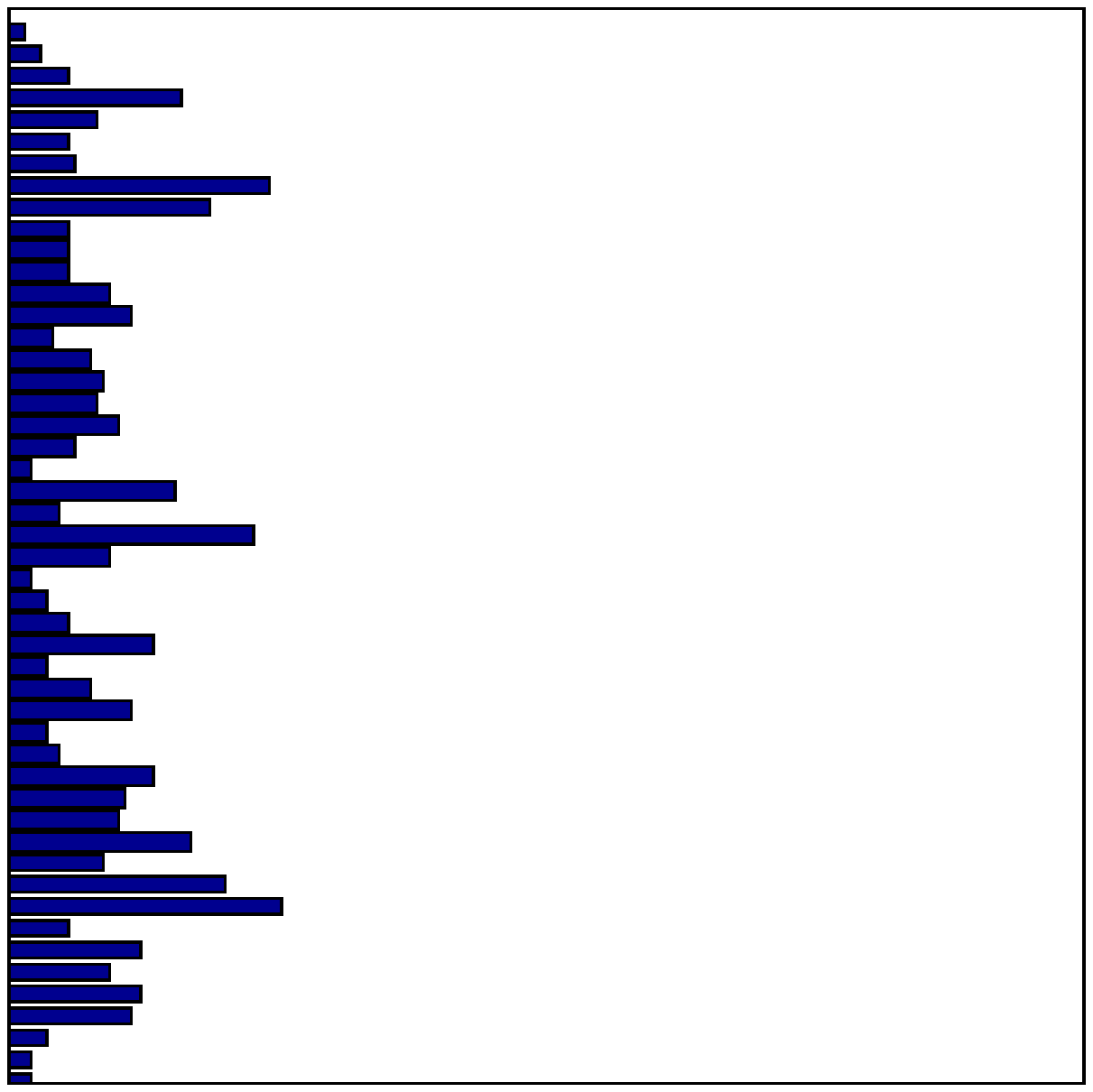}
	}
	\hfill
	\raisebox{0.45in}{
	\includegraphics[width=0.25\linewidth]{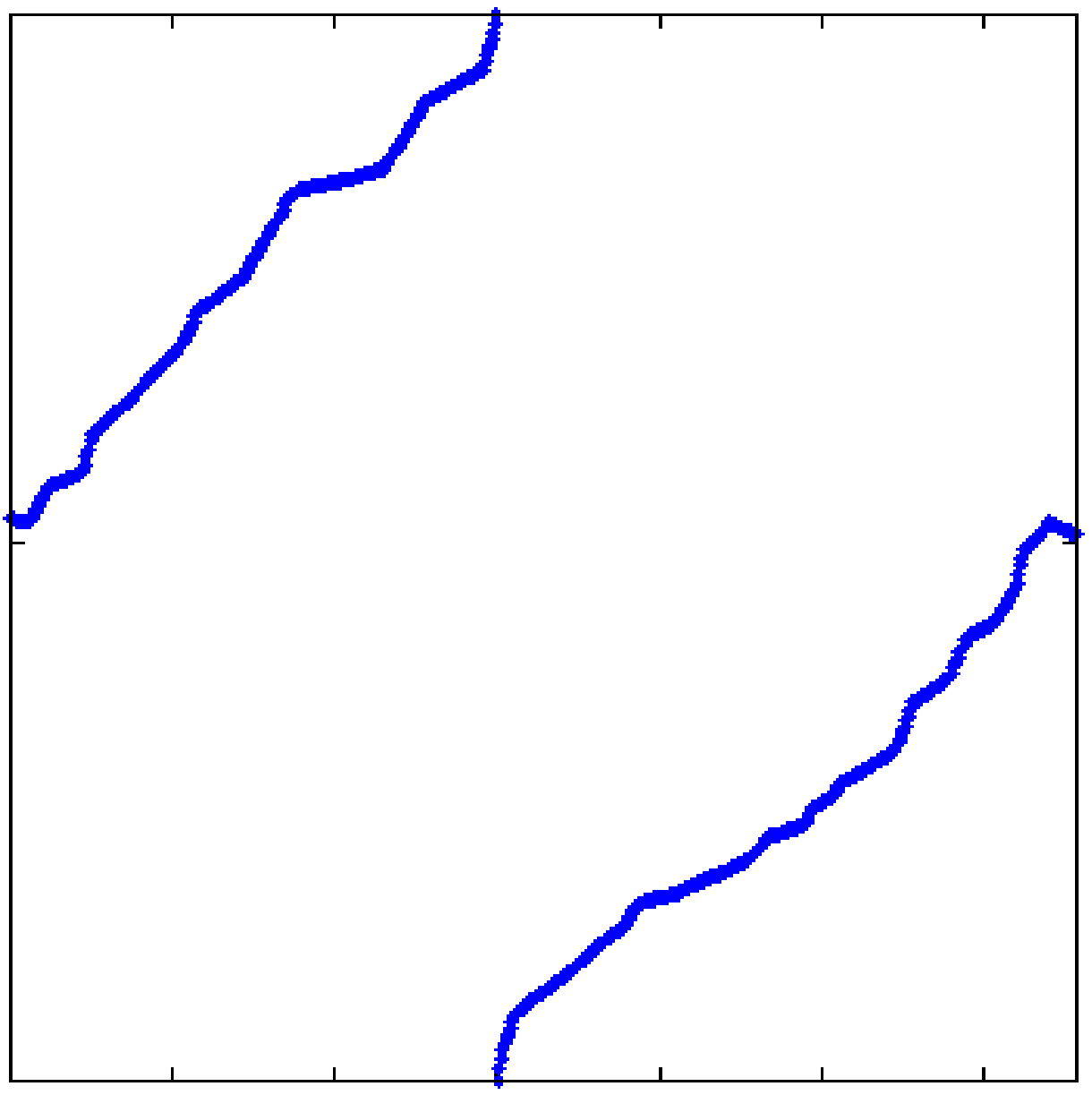}
	}
	\hfill
	\includegraphics[width=0.4\linewidth]{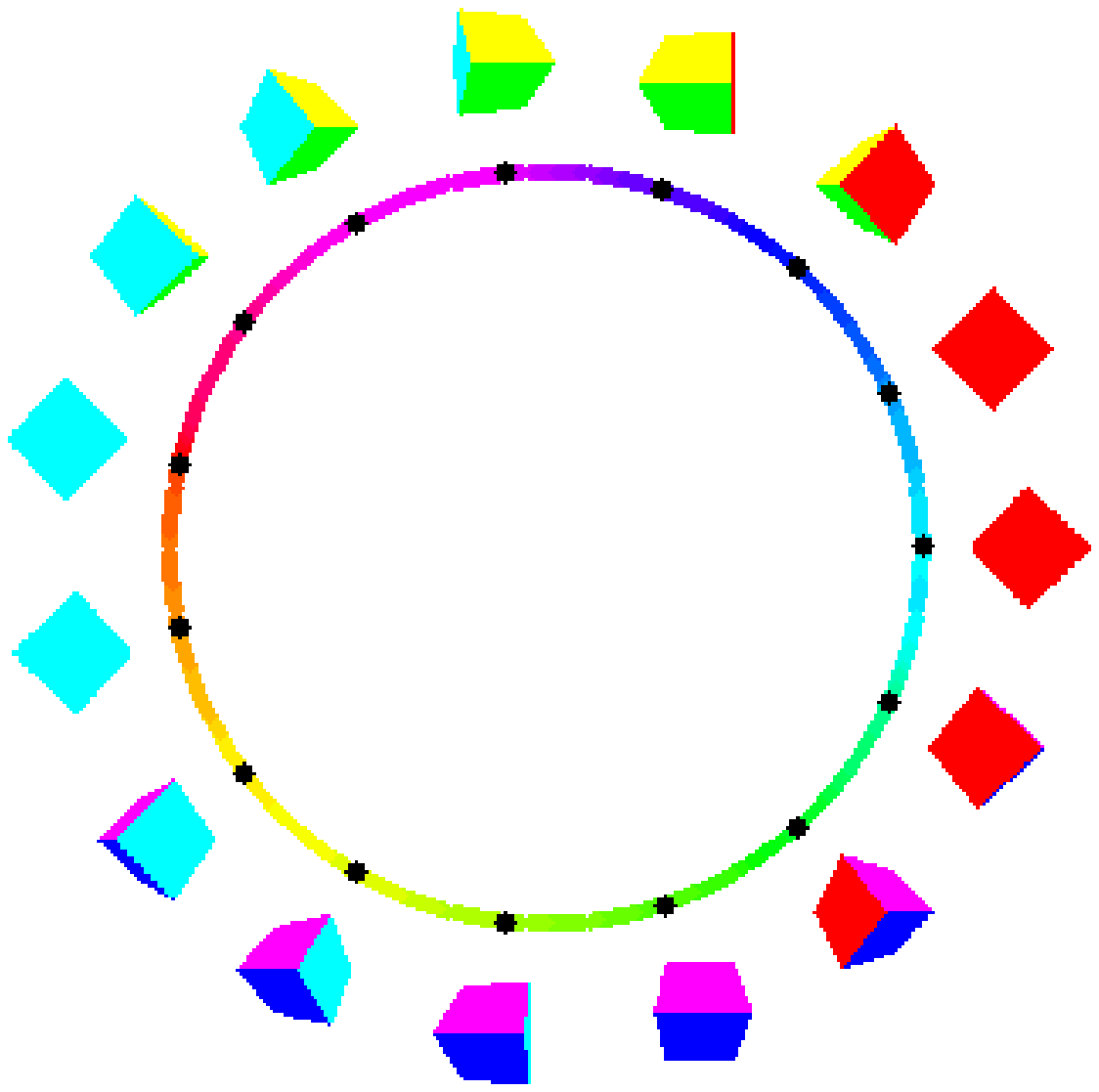}
}
\caption{Images of a rotating cube. Histogram of coordinate values (left); scatter plot against known angle function (middle); a selection of images matched to recovered circle coordinate (right).}
\label{fig:cube}
\end{figure*}
See Figure~\ref{fig:cube}. The frequency distribution is comparatively smooth (by which we mean that there are no large spikes in the histogram), which indicates that the coordinate does not have large static regions. The correlation plot of the inferred coordinate against the original known sequence of the cube images shows a correlation with topological degree~1. We show the progression of the animation on an evenly-spaced sample of representative points around the circle.

%-------------------------------------------
\subsection{Pair of circles}
\label{sec:wedge-two-circles}

See Figure~\ref{fig:wedgedisjoint} for these two examples.

Conjoined circles: we picked 400 points distributed along circles in the plane with radius 1 and with centres at $(\pm 1,0)$. The points were then jittered by
adding noise to each coordinate taken uniformly randomly from the interval $[0.0,0.3]$. A Rips complex was constructed with maximal radius 0.5, resulting in
76763 simplices. The cohomology was computed in 378 seconds.

Disjoint circles: 400 points were distributed on circles of radius 1 centered around
$(\pm 2, 0)$ in the plane. These points were subsequently disturbed by
a uniform random variable from $[0.0,0.5]$. We constructed a Rips
complex with maximum radius 0.5, which gave us 45809 simplices. The
cohomology computation finished in about 117 seconds.

%-------------------------------------------
\begin{figure*}
\centering
\begin{minipage}{0.35\linewidth}
\centering
\begin{tabular}{m{0.49\linewidth}m{0.49\linewidth}}
  \includegraphics[width=\linewidth]{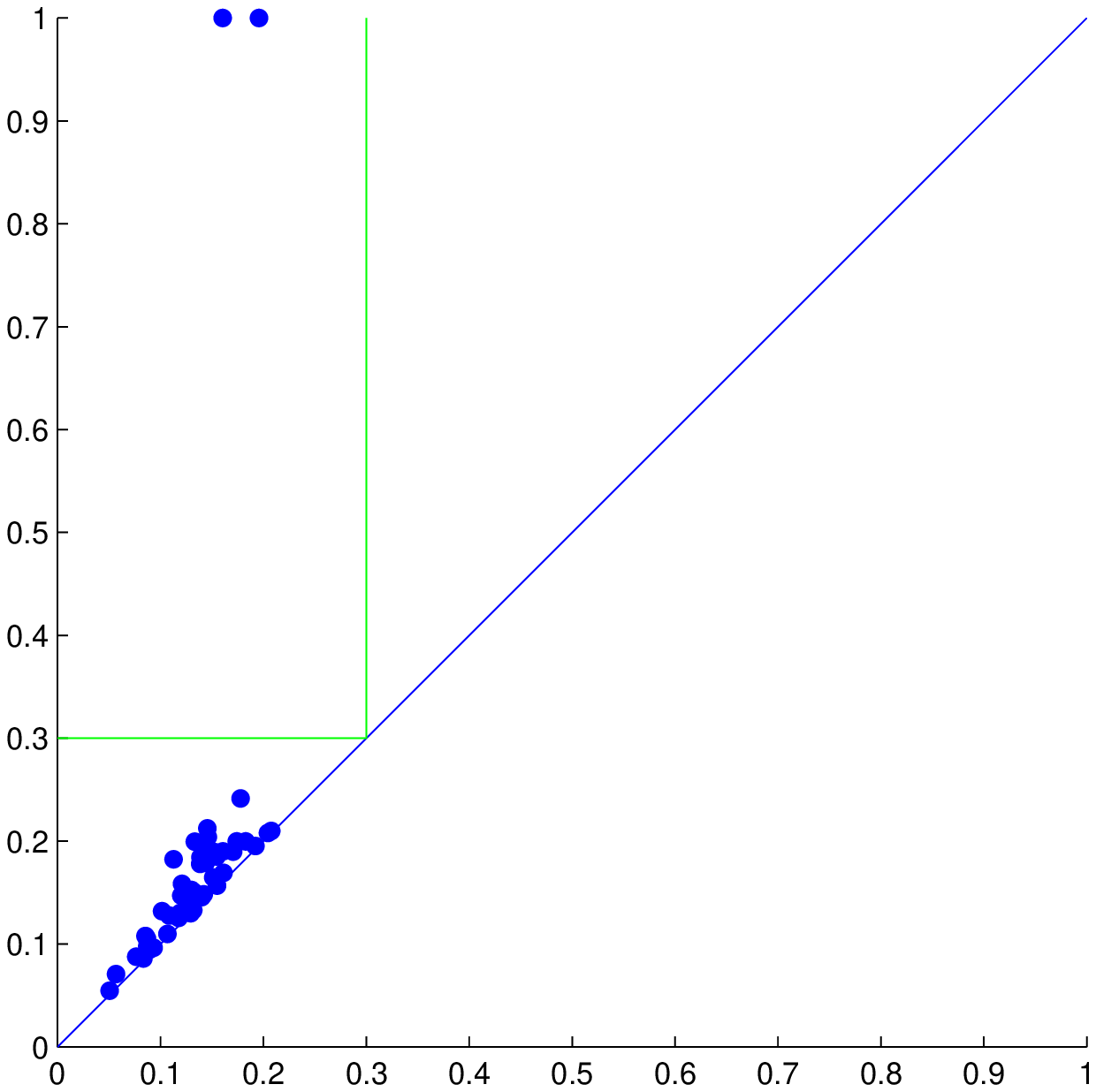}
  &\includegraphics[height=0.5\linewidth]{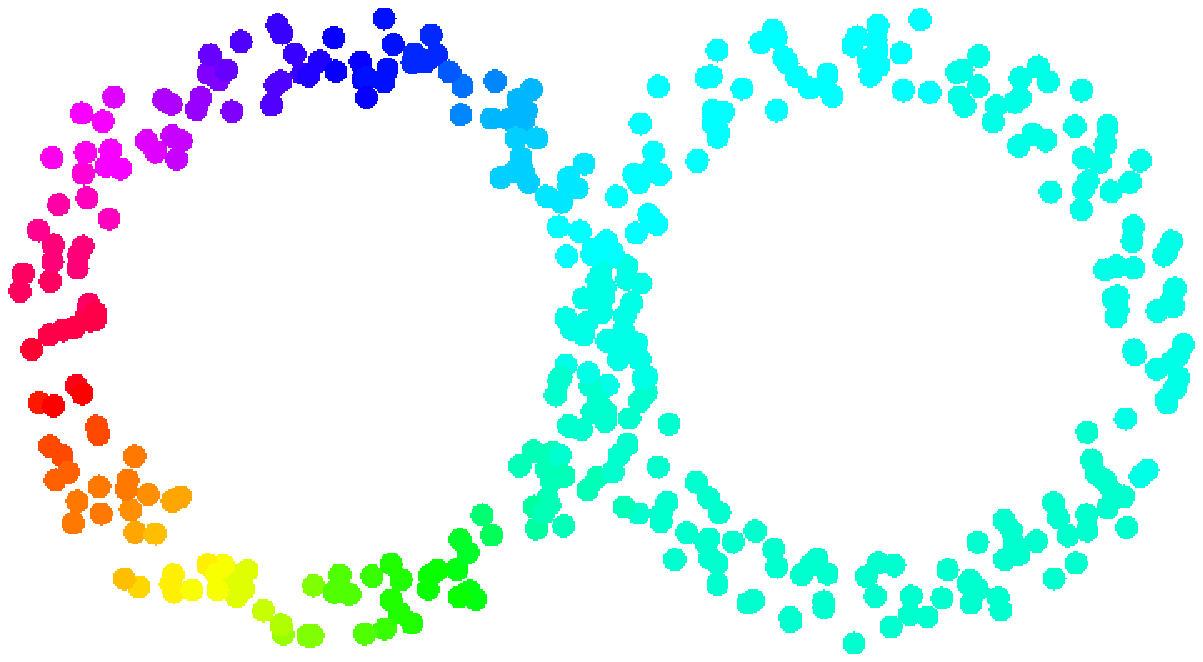}
  \\
  \includegraphics[width=\linewidth]{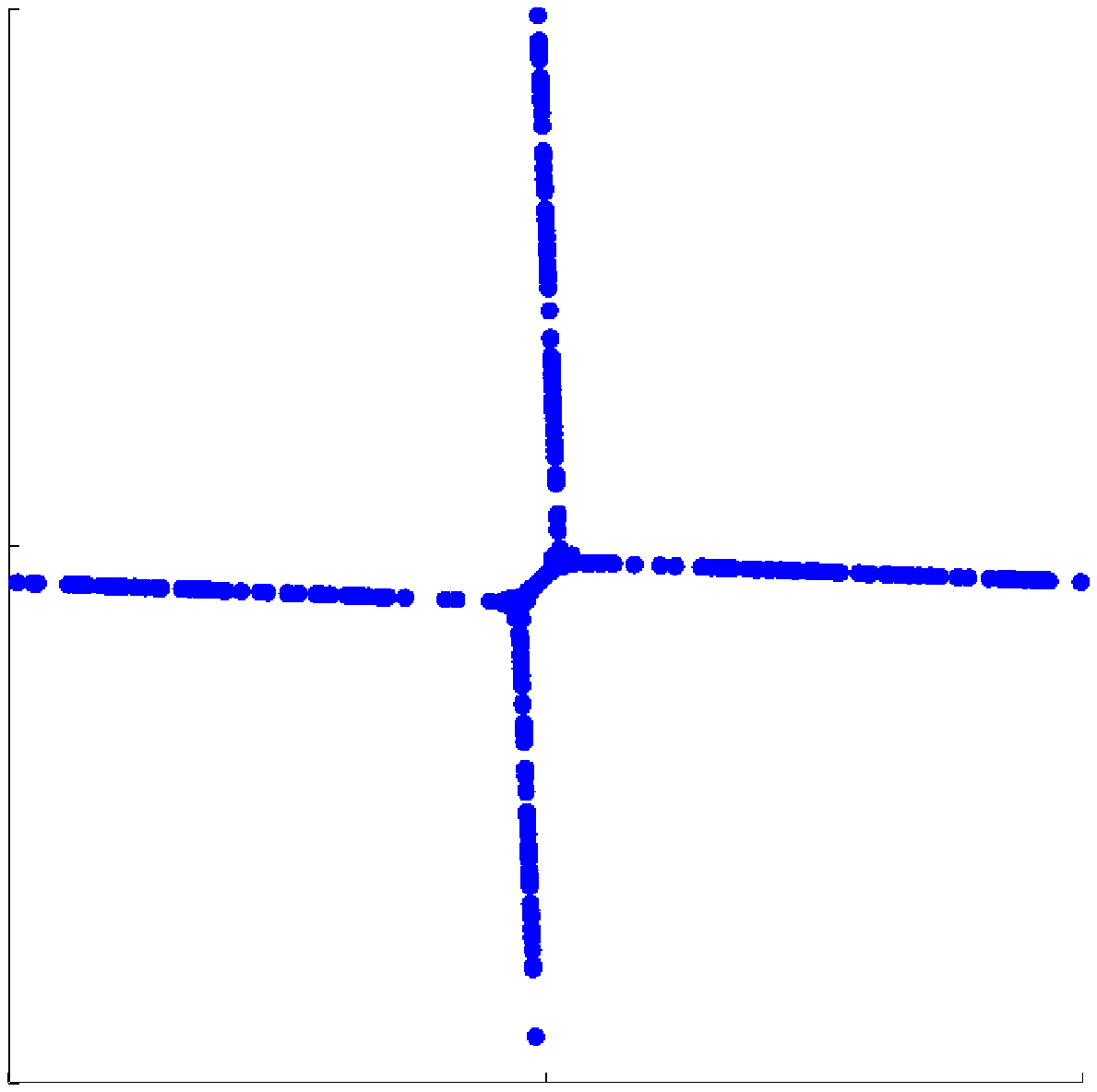}
  &\includegraphics[height=0.5\linewidth]{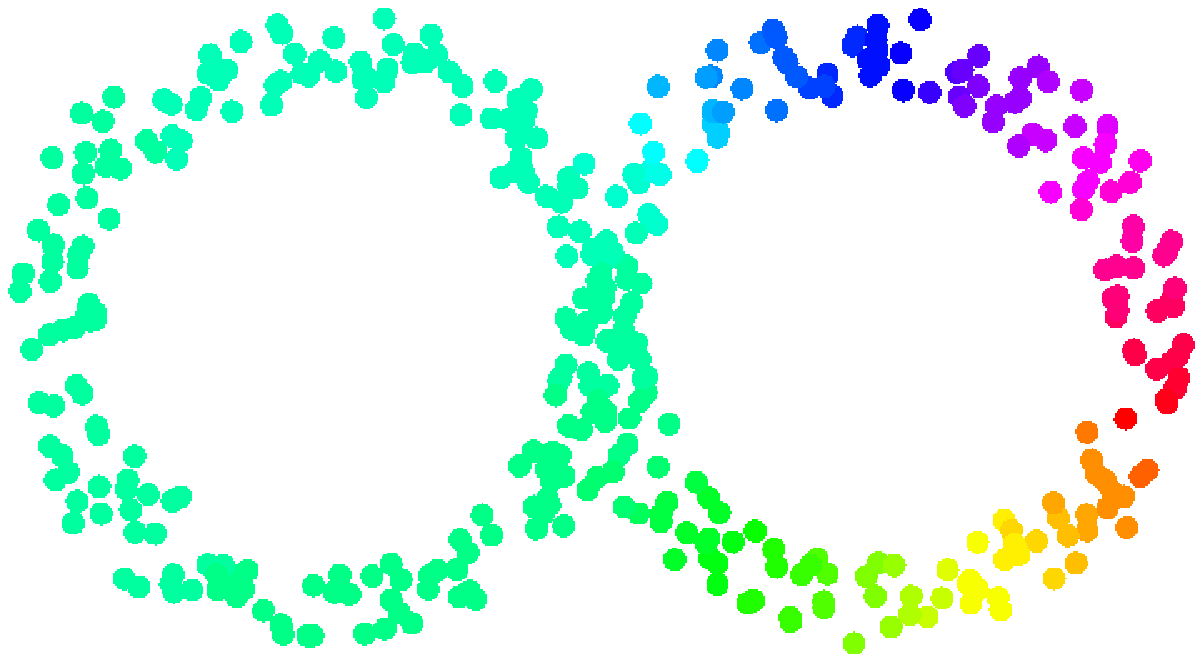}
\end{tabular}
\end{minipage}
\hspace{0.1\linewidth}
\begin{minipage}{0.525\linewidth}
\centering
\begin{tabular}{m{0.33\linewidth}m{0.66\linewidth}}
  \includegraphics[width=\linewidth]{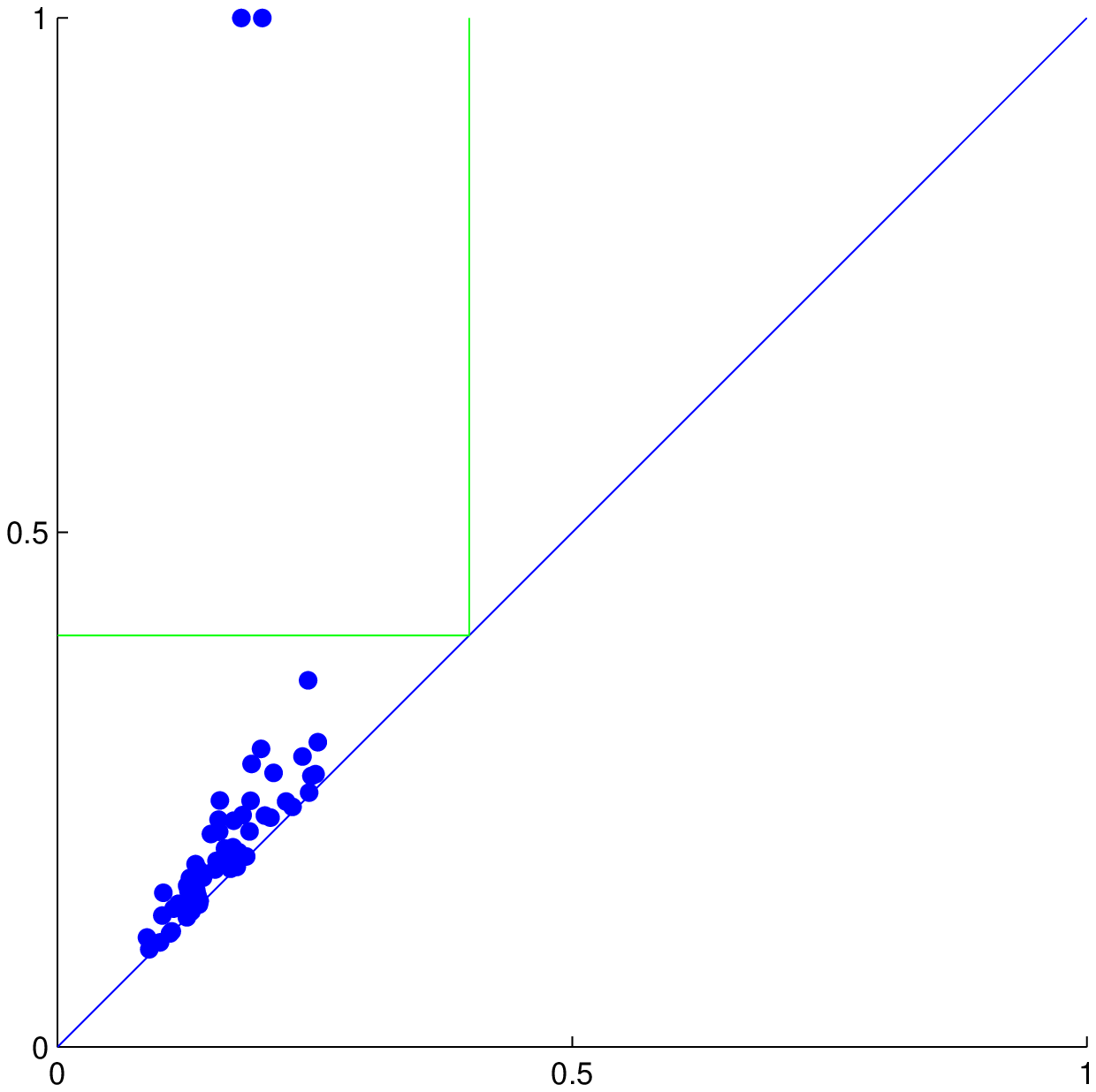}
  &\includegraphics[height=0.275\linewidth]{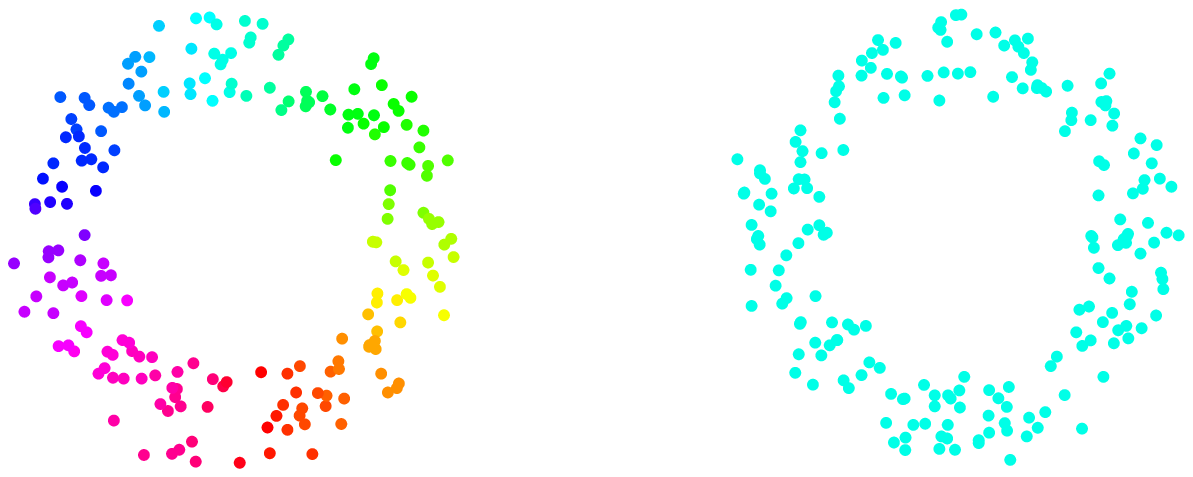}
  \\
  \includegraphics[width=\linewidth]{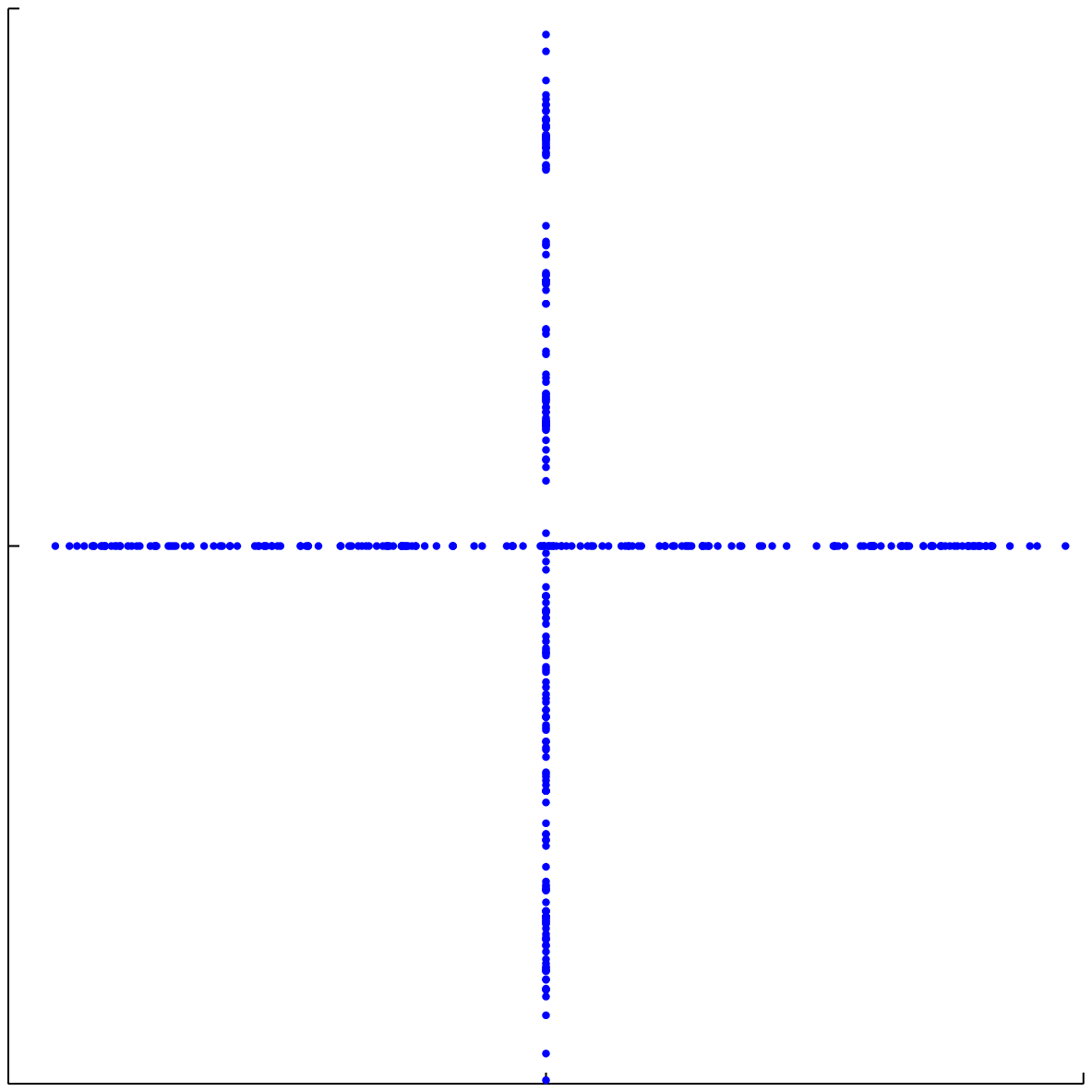}
  &\includegraphics[height=0.275\linewidth]{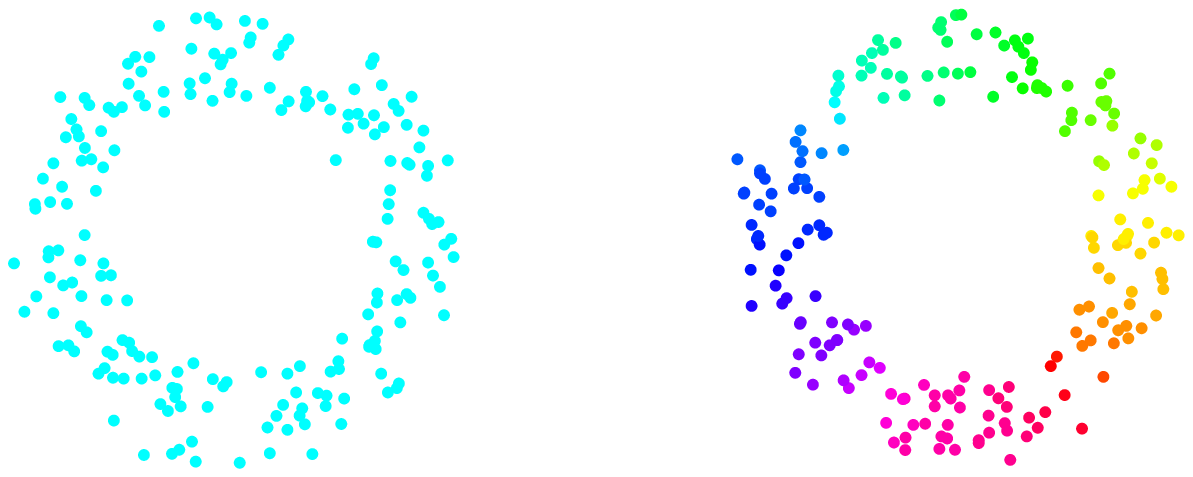}
\end{tabular}
\end{minipage}
\caption{Two conjoined circles (left); two disjoint circles (right). In each case we show the persistence diagram (top left),  the two inferred coordinates (right column), the correlation scatter plot (bottom left).}
\label{fig:wedgedisjoint}
\end{figure*}

In both cases, our method detects the two most natural circle-valued functions. The scatter plots appear very similar. In the conjoined case, there is some interference between the two circles, near their meeting point.

%-------------------------------------------
\subsection{Torus}
\label{sec:torus}

See Figure~\ref{fig:torus}. We picked 400 points at random in the unit square, and then used a standard parametrization to map the points onto a torus with inner and outer radii 1.0 and 3.0. These were subsequently jittered by adding a uniform
random variable from $[0.0,0.2]$ to each coordinate. We constructed a Rips
complex with maximal radius $\sqrt3$, resulting in 61522 simplices. The corresponding cohomology was computed in 209 seconds.

\begin{figure*}
\centering
\begin{minipage}{0.8\linewidth}
\centering{
  \subfigure[Persistence diagram (left); first inferred coordinate (middle); second inferred coordinate (right).]
{
  \includegraphics[width=0.25\linewidth]{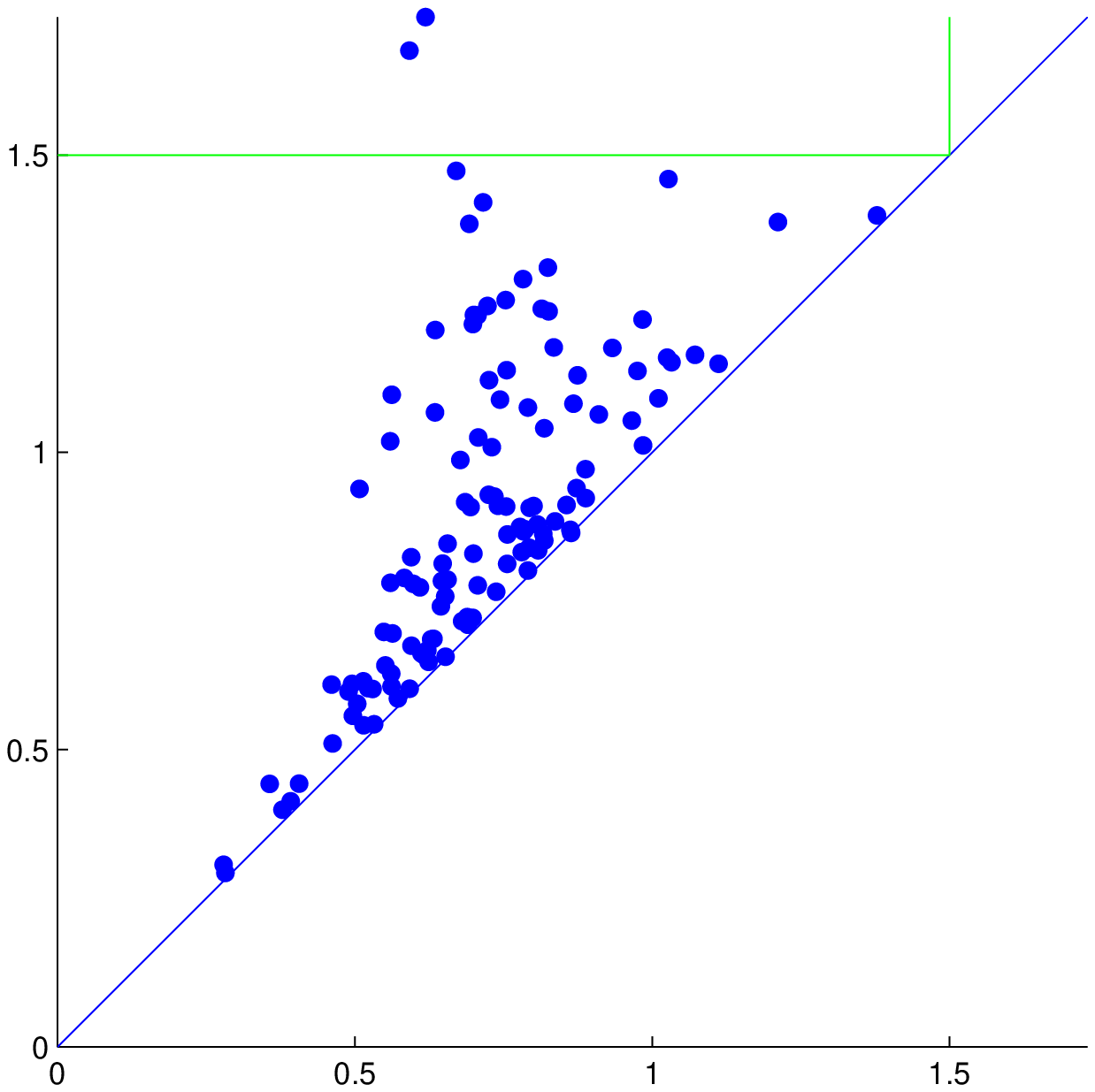} \qquad
  \includegraphics[width=0.3\linewidth]{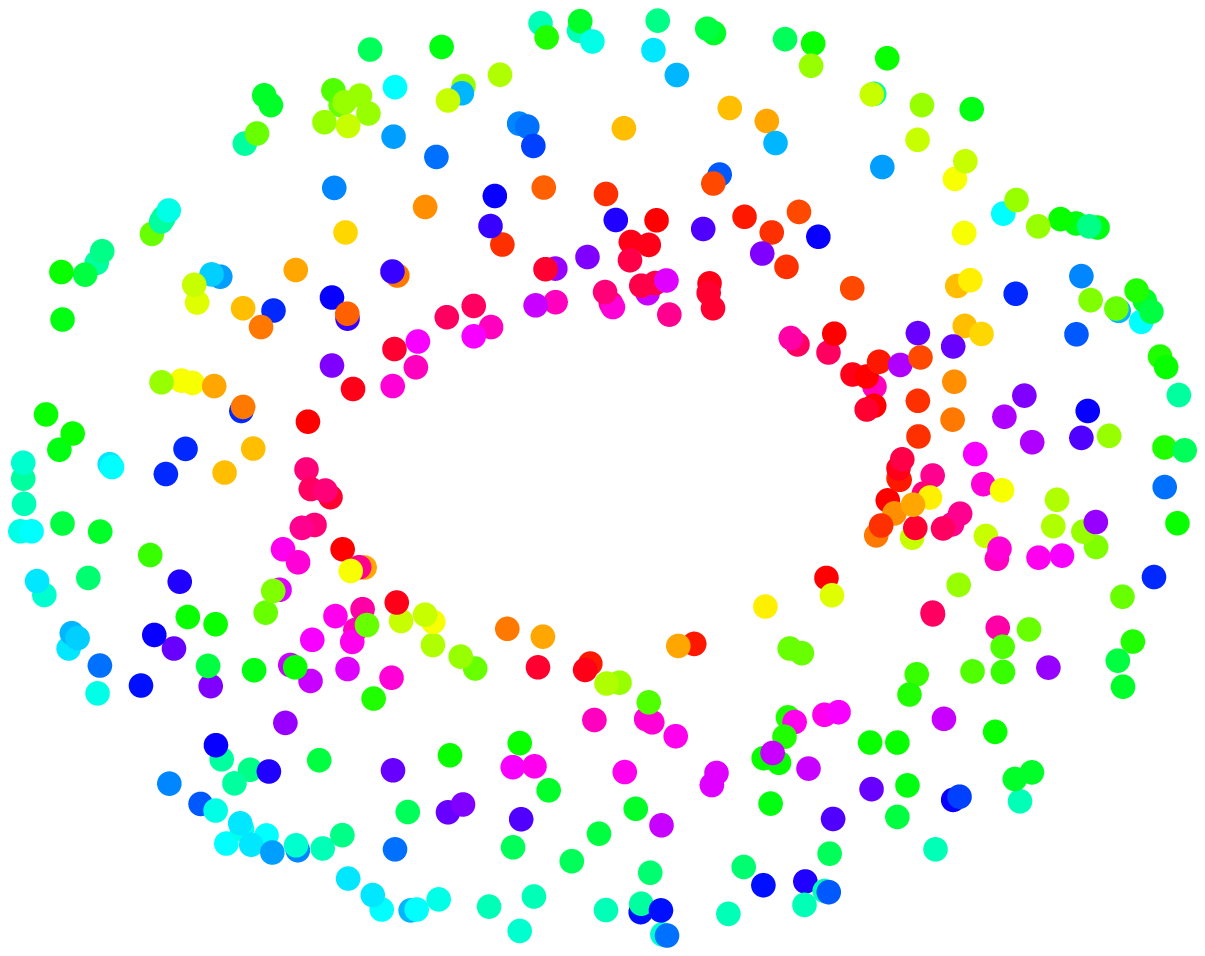} \qquad
  \includegraphics[width=0.3\linewidth]{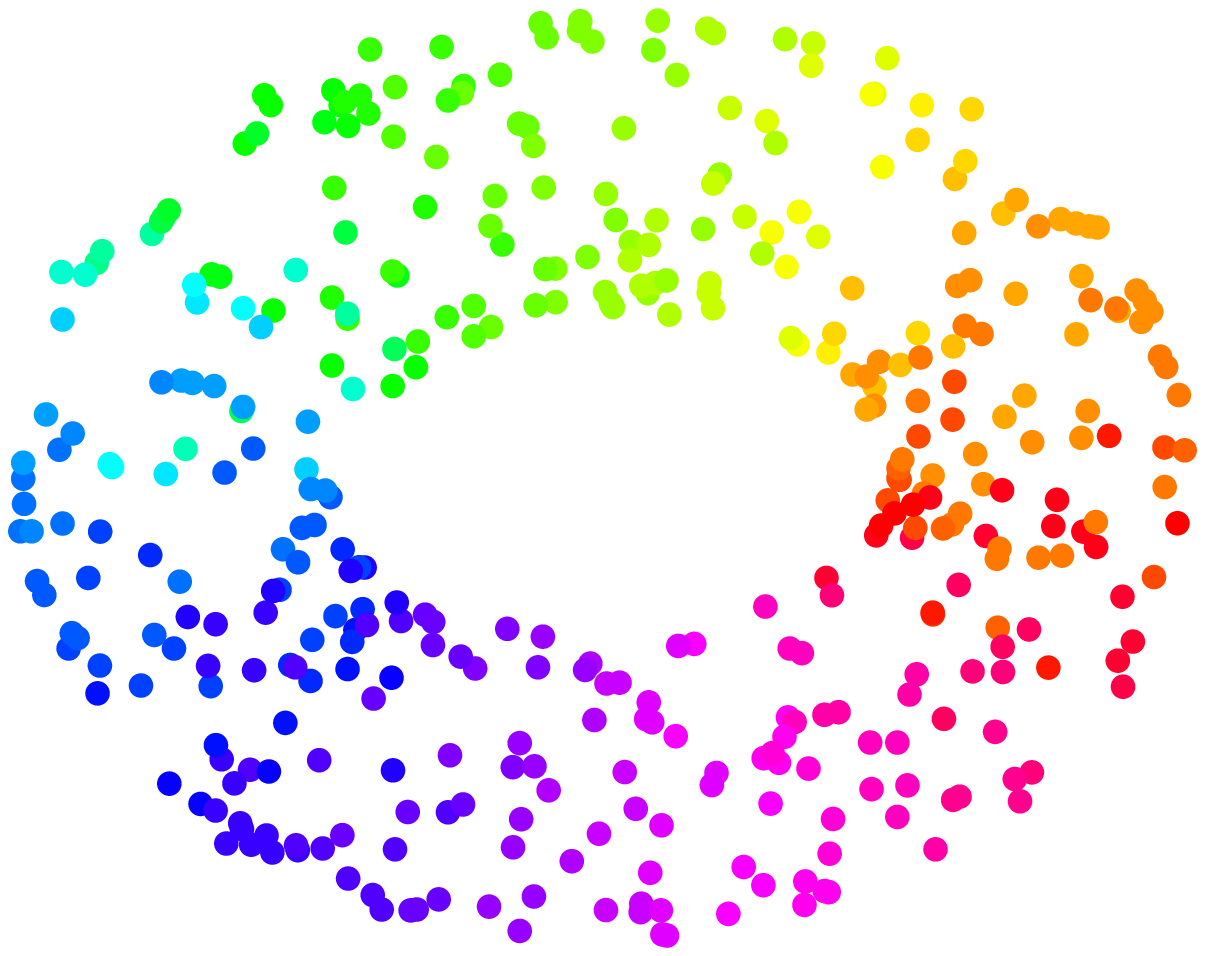}}
}
  \\
  \subfigure[Correlation scatter plots between the two original and two inferred coordinates.]{
  \begin{tabular}{cccc}
    & $\text{Inferred}_2$ & $\text{Original}_1$ &
    $\text{Original}_2$\\
    {\begin{sideways} \hspace*{2em}
        $\textrm{Inferred}_{1}$\end{sideways}} & 
    \includegraphics[width=0.3\linewidth]{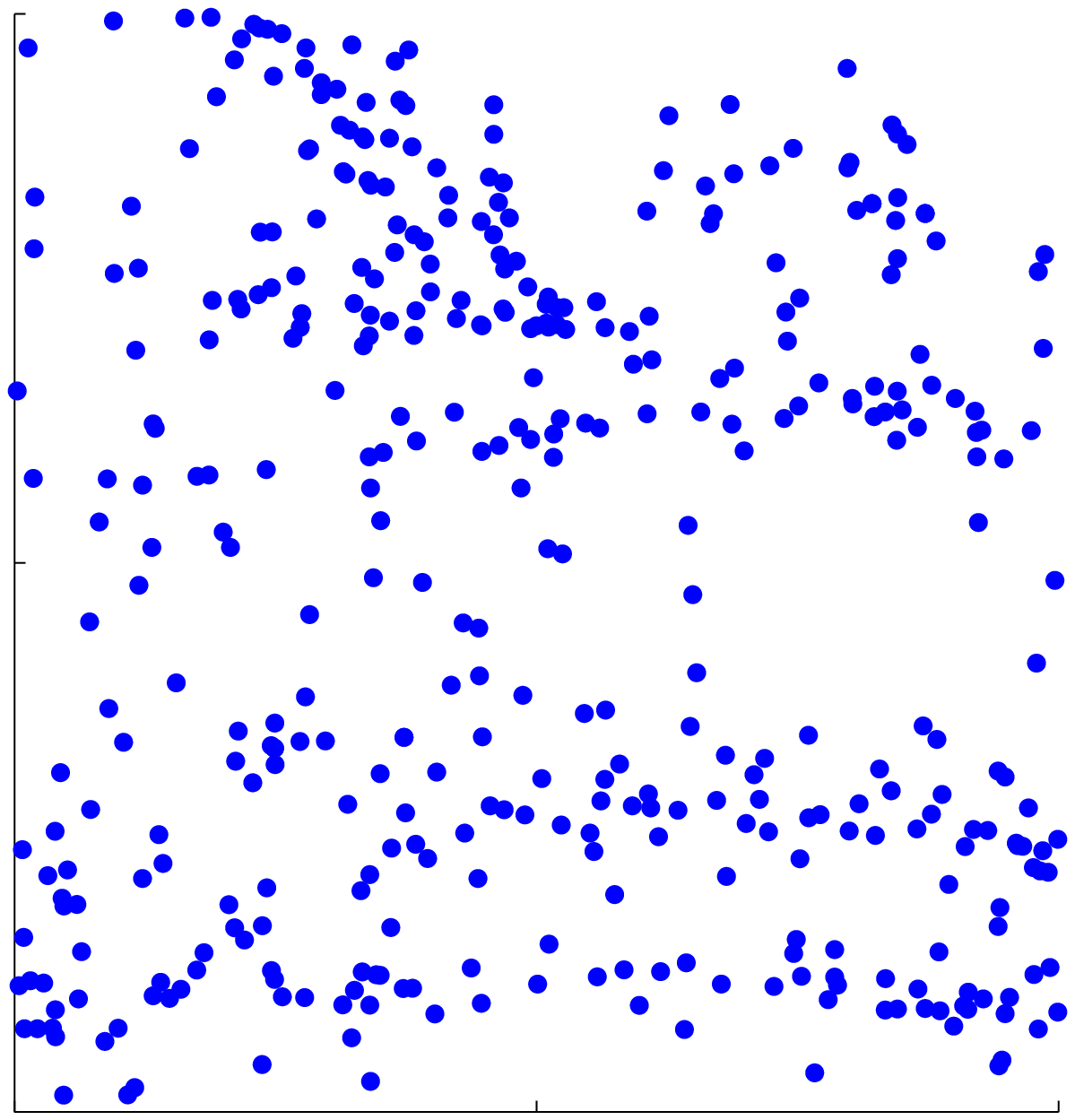} &
    \includegraphics[width=0.3\linewidth]{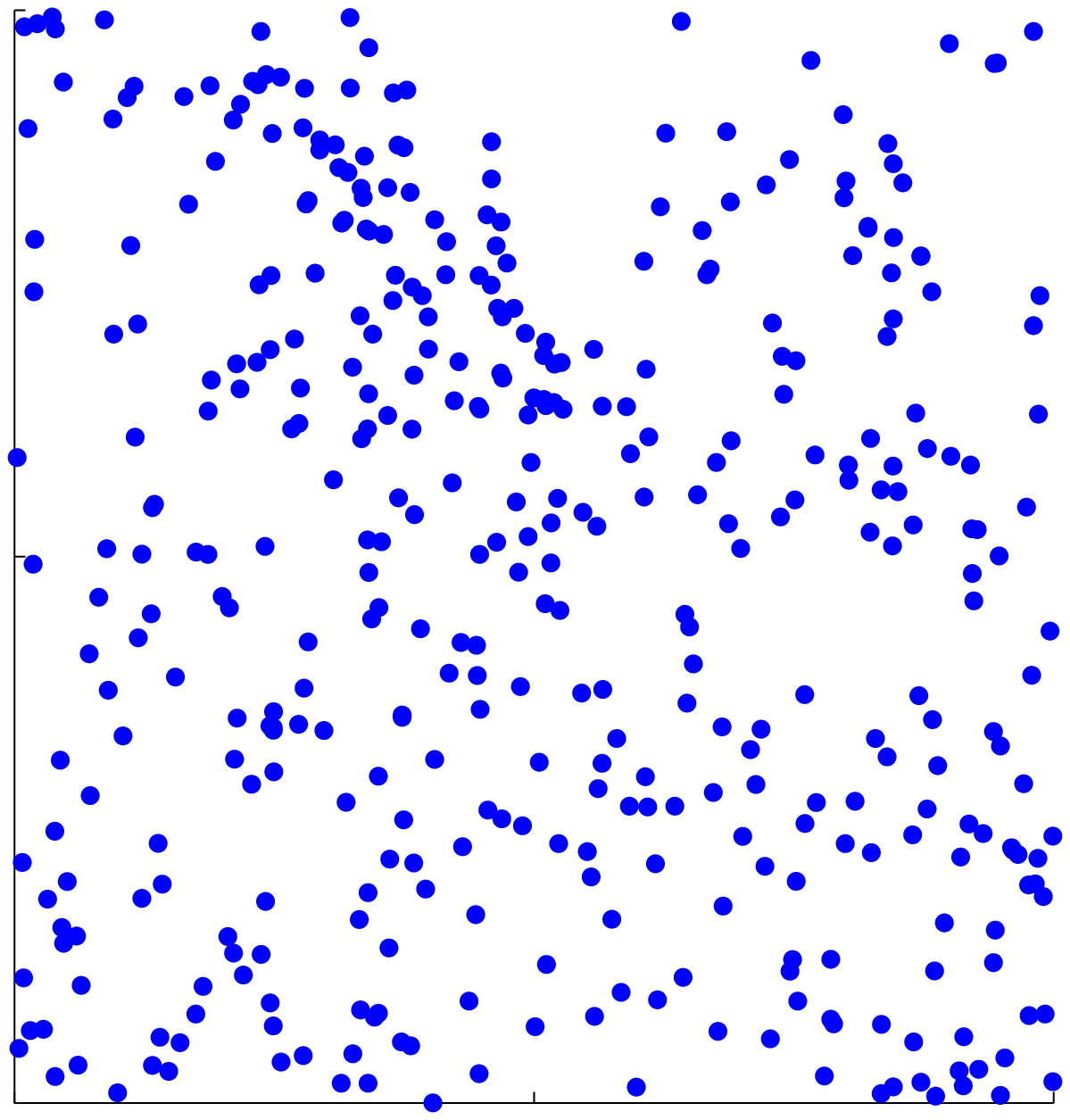} &
    \includegraphics[width=0.3\linewidth]{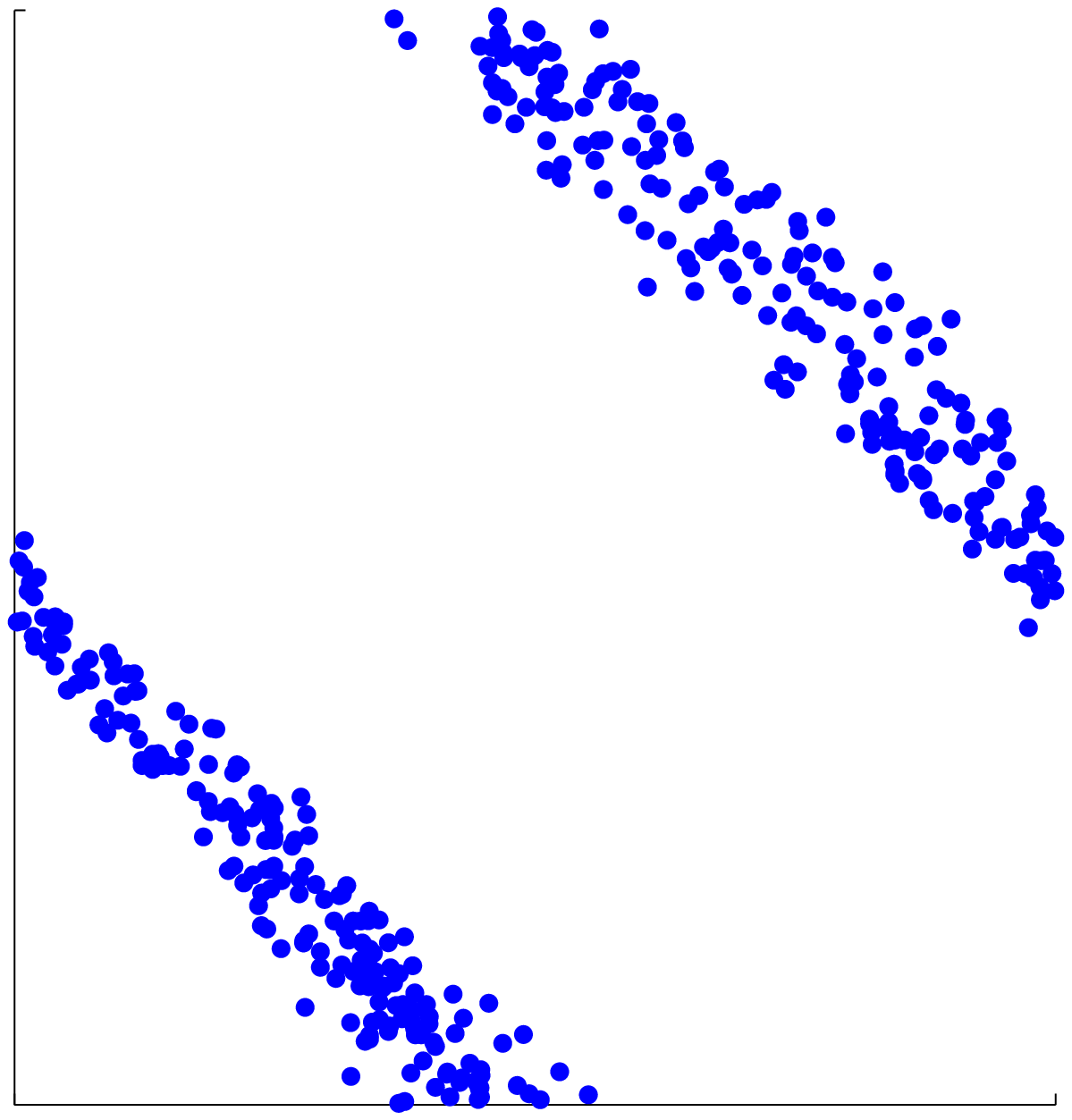} \\
    {\begin{sideways} \hspace*{2em}
        $\textrm{Inferred}_{2}$\end{sideways}} & 
    &
    \includegraphics[width=0.3\linewidth]{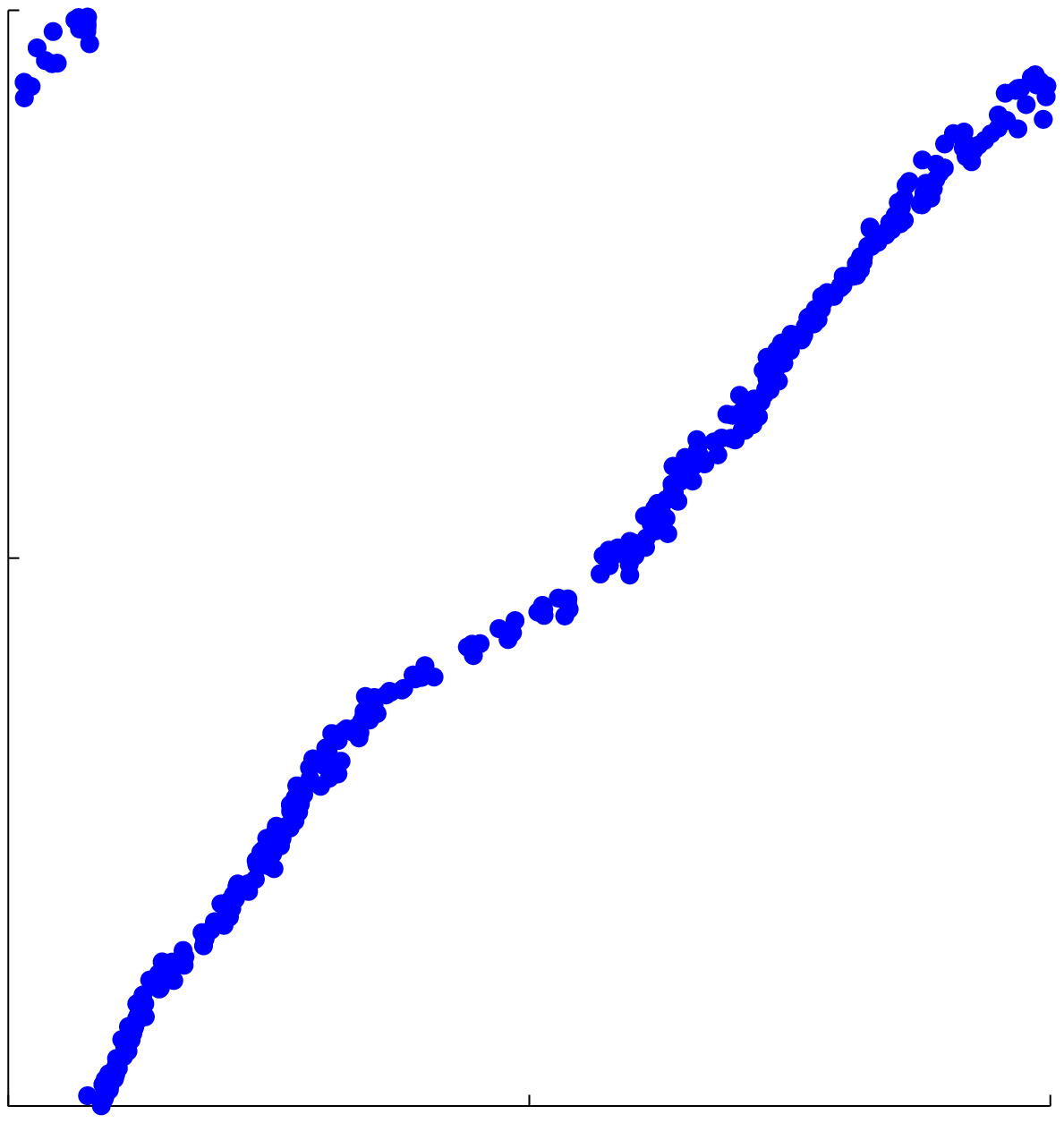} &
    \includegraphics[width=0.3\linewidth]{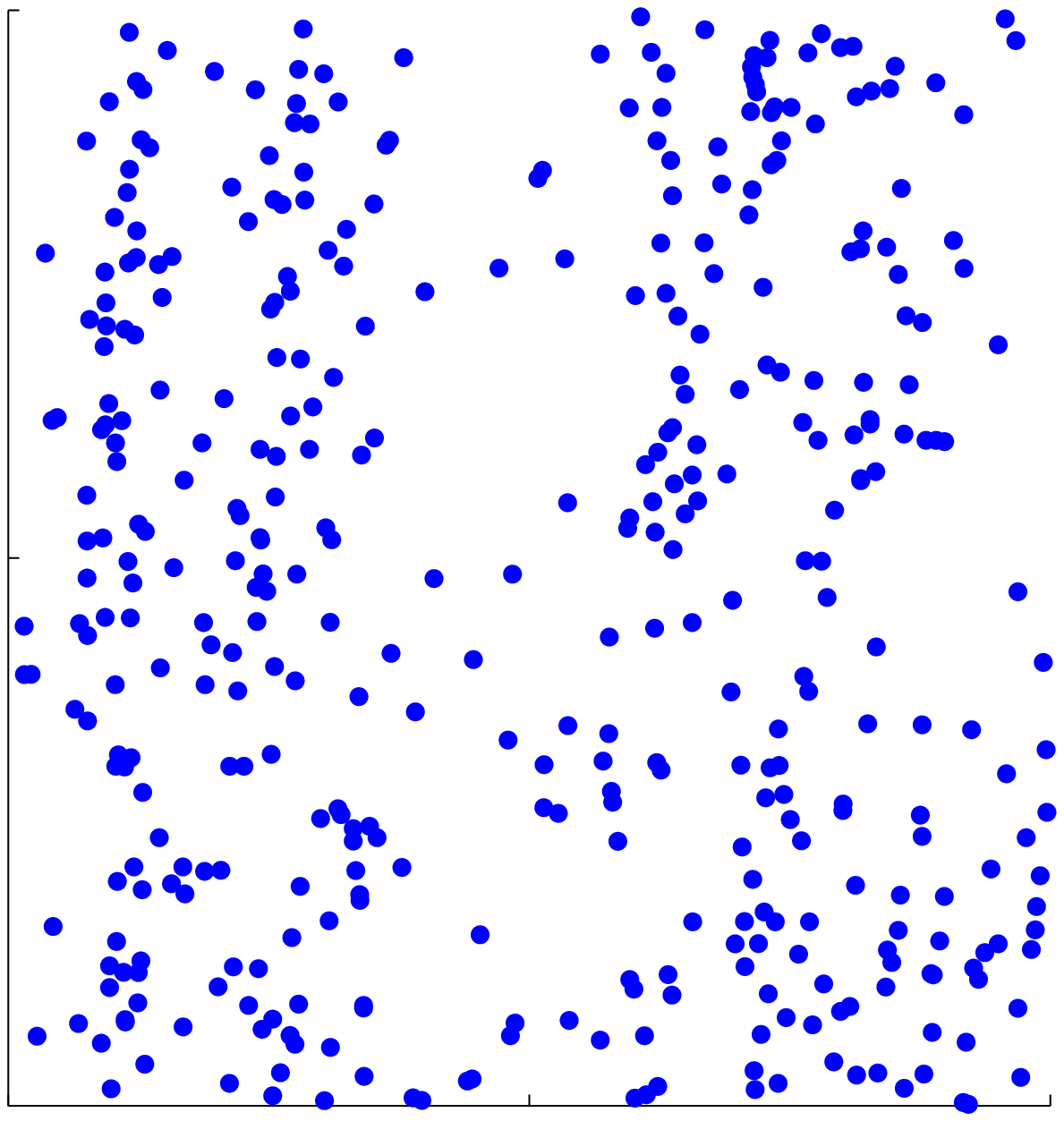} \\
    {\begin{sideways} \hspace*{2em}
        $\textrm{Original}_{1}$\end{sideways}} & 
    &
    &
    \includegraphics[width=0.3\linewidth]{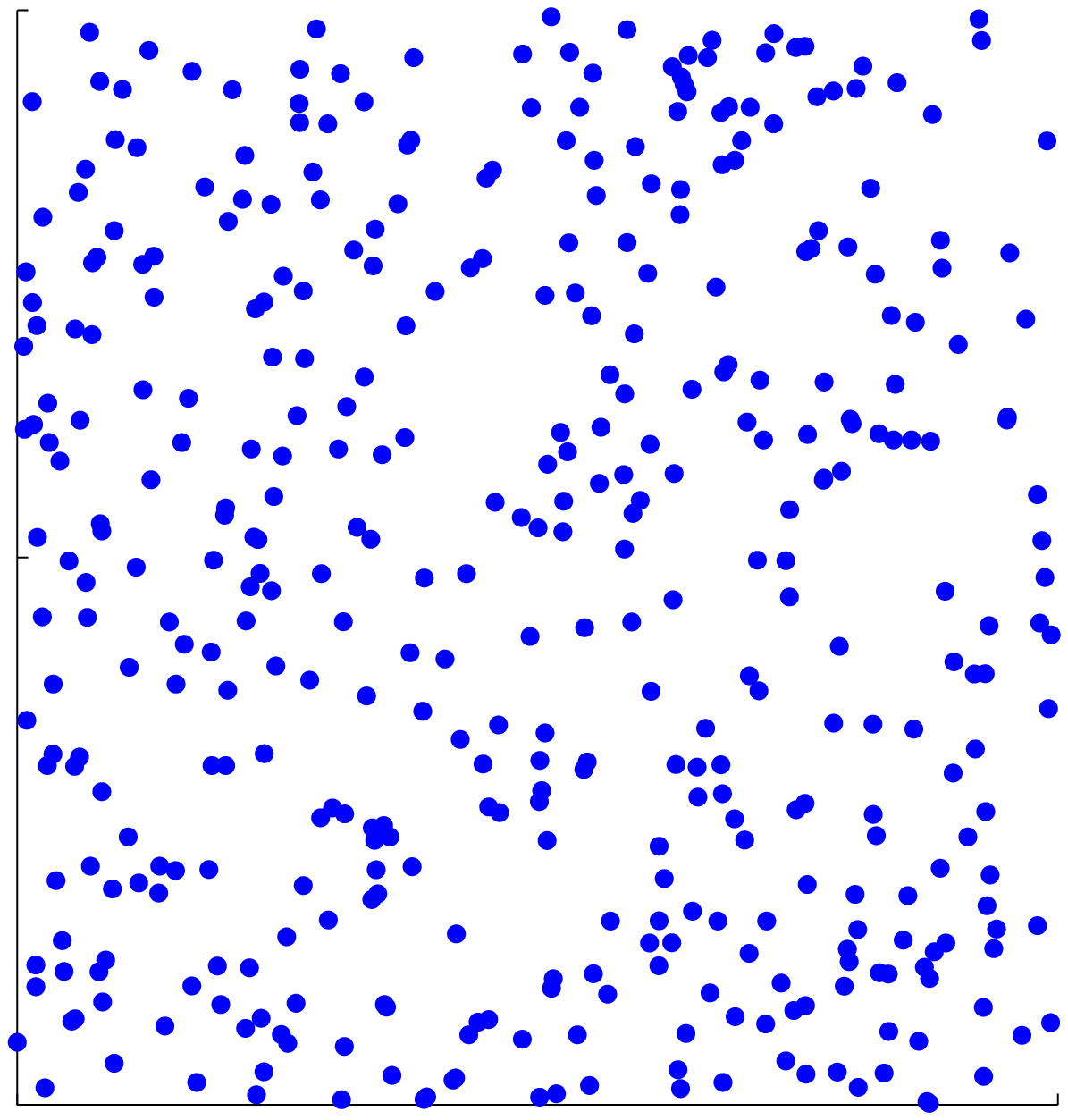} \\
  \end{tabular}
  }
  \caption{Torus in~$\Rr^3$.}
  \label{fig:torus}
\end{minipage}
\end{figure*}

The two inferred coordinates in this (fairly typical) experimental run recover the original coordinates essentially perfectly: the first inferred coordinate correlates with the meridional coordinate with topological degree~$-1$, while the second inferred coordinate correlates with the longitudinal coordinate with degree~$1$.

When the original coordinates are unavailable, the important figure is the inferred-versus-inferred scatter plot. In this case the scatter plot is fairly uniformly distributed over the entire coordinate square (i.e.\ torus). In other words, the two coordinates are decorrelated. This is slightly truer (and more clearly apparent in the scatter plot) for the two original coordinates. Contrast these with the corresponding scatter plots for a pair of circles (conjoined or disjoint).

%-------------------------------------------
\subsection{Elliptic curve}
\label{sec:elliptic-curve}

See Figure~\ref{fig:elliptic}. For fun, we repeated the previous experiment with a torus abstractly defined as the zero set of a homogeneous cubic polynomial in three variables, interpreted as a complex projective curve. We picked 400 points at random on $S^5 \subset \Cc^3$, subject to the cubic equation
\[
x^2y + y^2 z + z^2 x = 0.
\]
To interpret these as points in $\Cc P^2$, we used the projectively invariant metric
\[
d(\xi,\eta) = \cos^{-1}(|\bar{\xi} \cdot \eta|)
\]
for all pairs $\xi,\eta \in S^5$. With this metric we
built a Rips complex with maximal radius 0.15. The resulting complex
had 44184 simplices, and the cohomology was computed in 56
seconds. We found two dominant coclasses that survived beyond radius~0.15, and we computed our parametrizations at the 0.15 mark.

The resulting correlation plot quite clearly exhibits the decorrelation which is characteristic of the torus.

\begin{figure}
  \centering
%  \subfigure[Persistence diagram for the elliptic curve computation]
  {\includegraphics[width=0.45\linewidth]{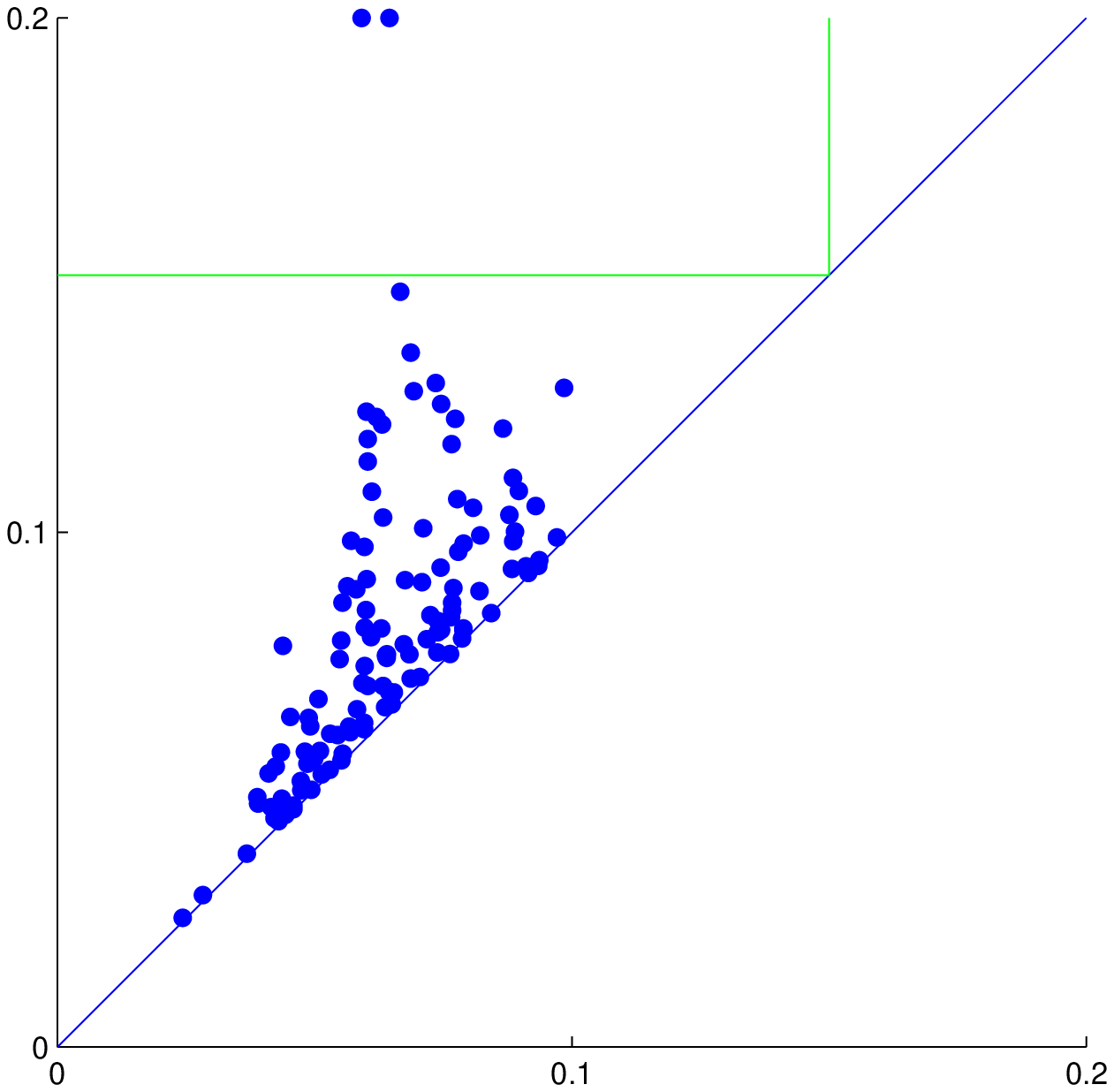}}
%  \quad
  \hfill
%  \subfigure[Correlation plot for the elliptic curve computation]
  {\includegraphics[width=0.45\linewidth]{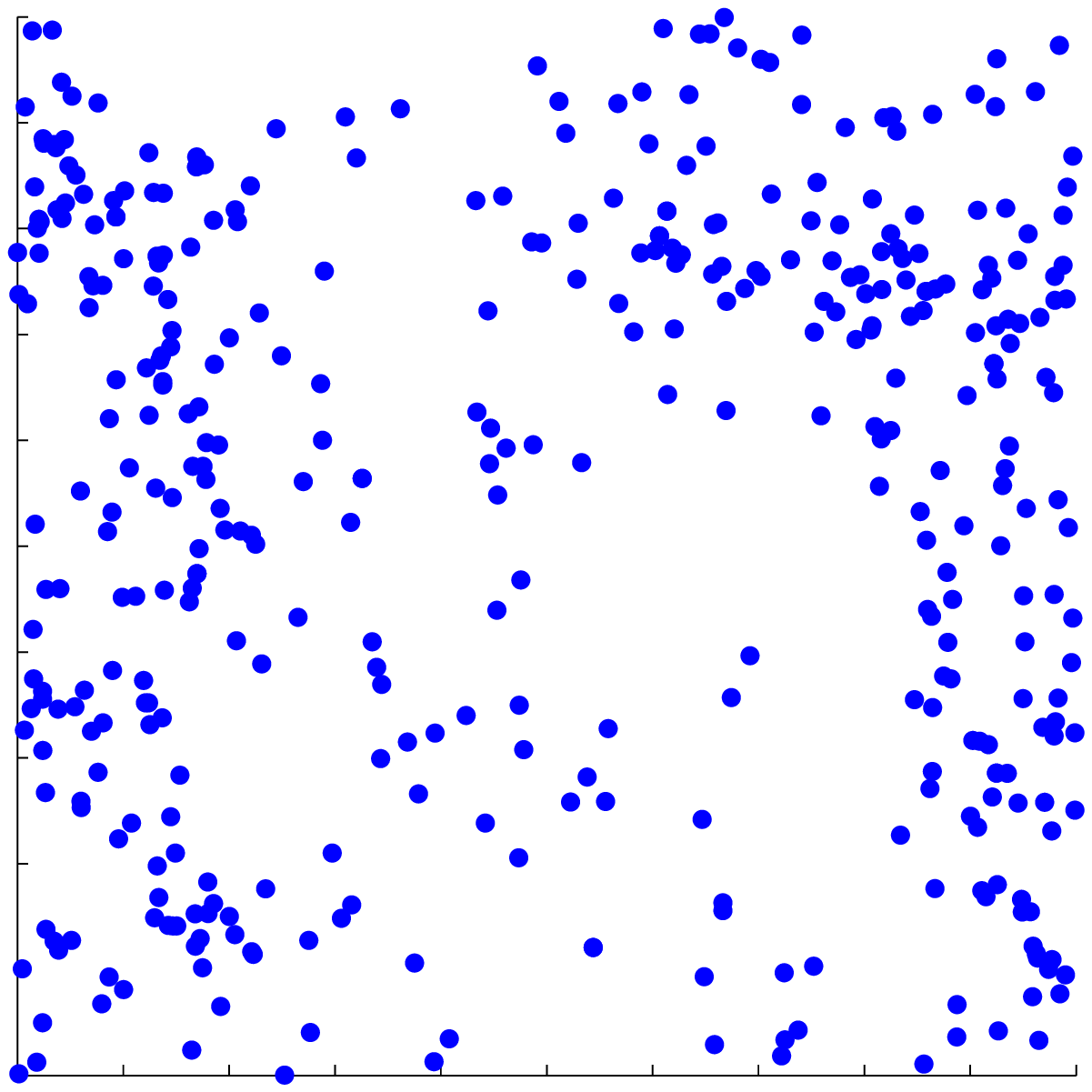} }
  \caption{Elliptic curve. Persistence diagram (left), correlation scatter plot between the two coordinates (right).}
  \label{fig:elliptic}
\end{figure}

\eject
%-------------------------------------------
\subsection{Double torus}
\label{sec:double-torus}

See Figure~\ref{fig:doubleTorus}. We constructed a genus-2 surface by generating 1600 points on a torus with inner and outer radii 1.0 and 3.0; slicing off part of the data set by a plane at distance 3.7 from the axis of the torus, and reflecting the remaining points in that plane. The resulting data set has 3120 points. Out of these, we pick 400 landmark points, and construct a witness complex with maximal radius 0.6. The landmark set yields a covering radius $r_{\max}=0.9982$ and a complex with 70605 simplices. The computation took 748 seconds active computer time. We identified the four most significant cocycles.

\begin{figure*}
  \centering
  \subfigure[Persistence diagram: four cocycles detected.]
  {
  \begin{minipage}{0.4\linewidth}
      \includegraphics[width=0.9\linewidth]{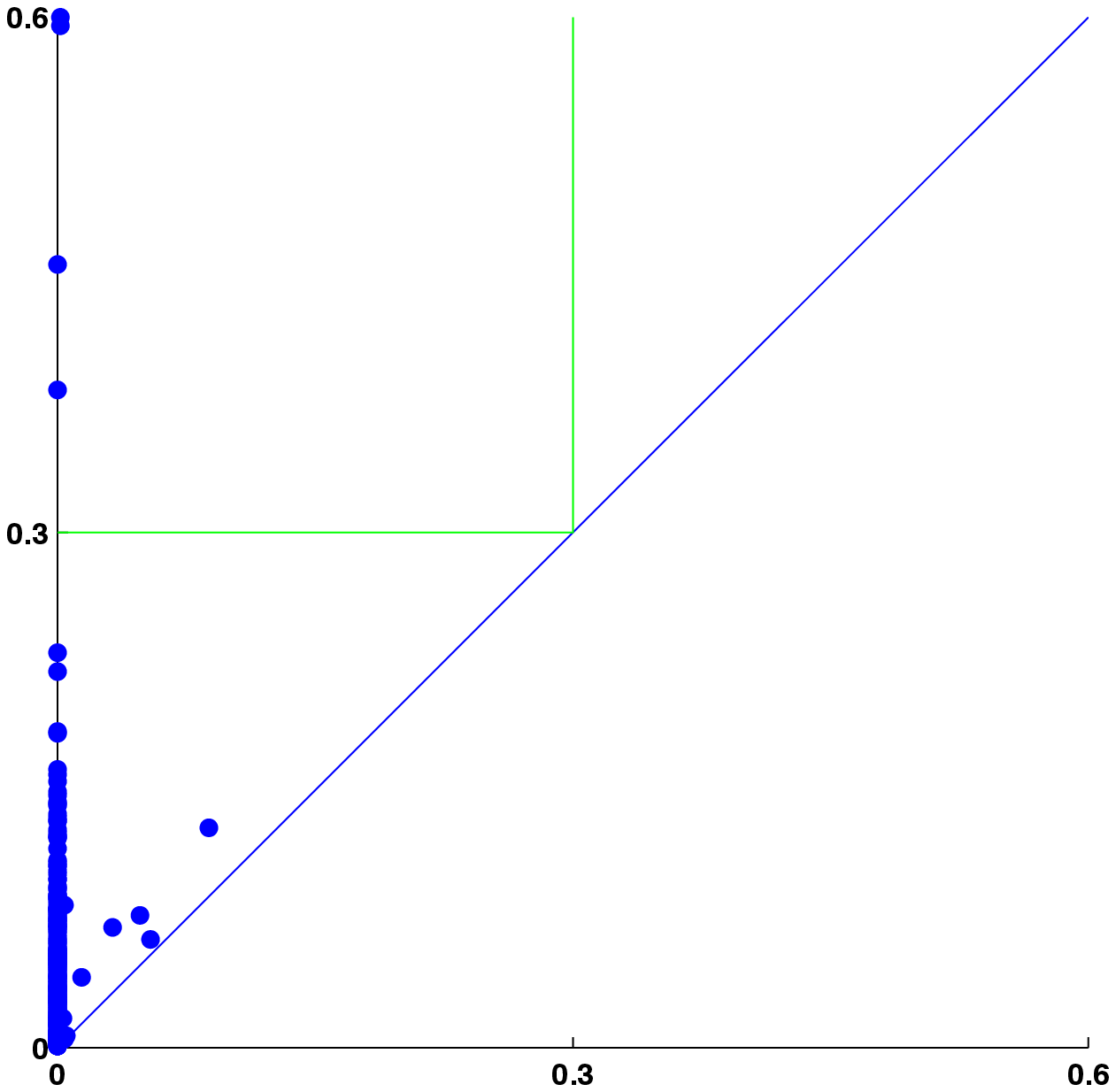}
  \end{minipage}
  }
  \\
  \subfigure[Correlation scatter plots between the four inferred coordinates.]
  {
  \begin{minipage}{0.8\linewidth}
    \begin{flushright}
     \includegraphics[width=0.225\linewidth]{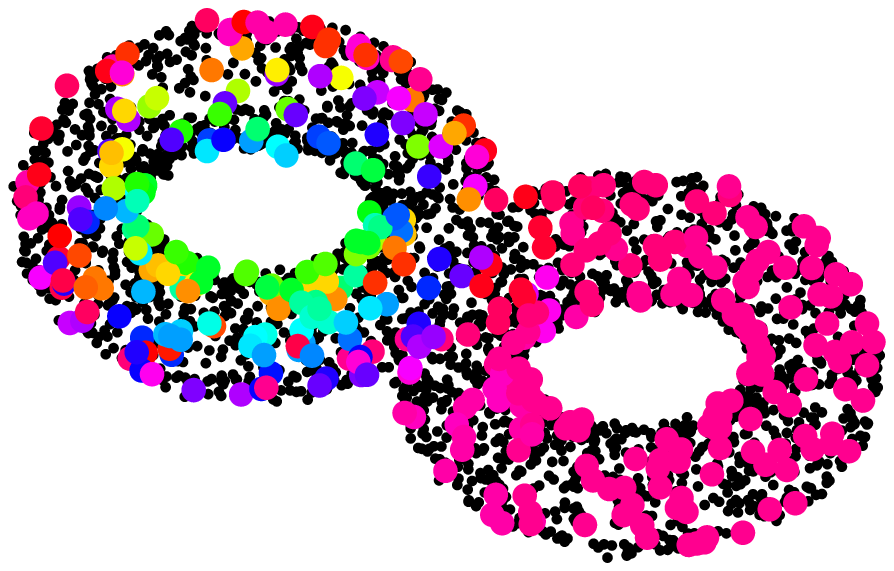}
     \hfill
     \includegraphics[width=0.225\linewidth]{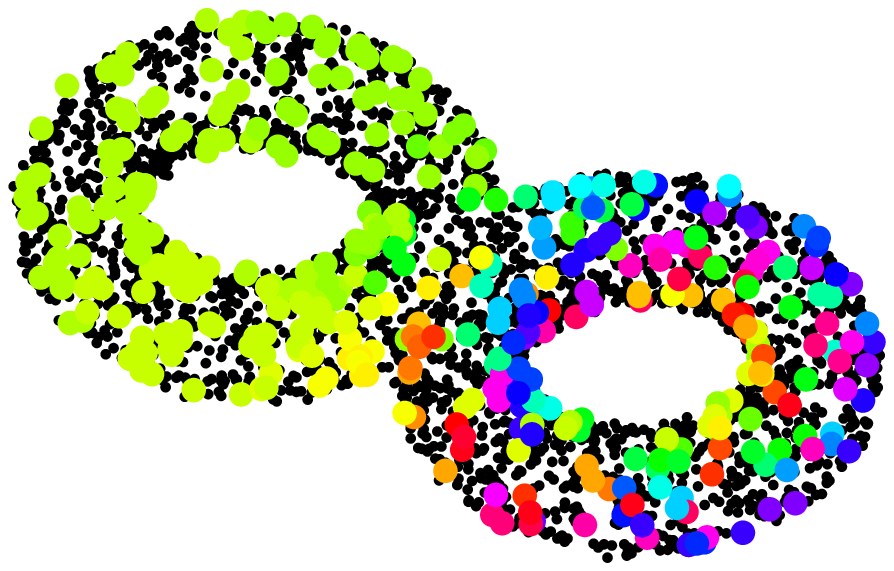}
     \hfill
     \includegraphics[width=0.225\linewidth]{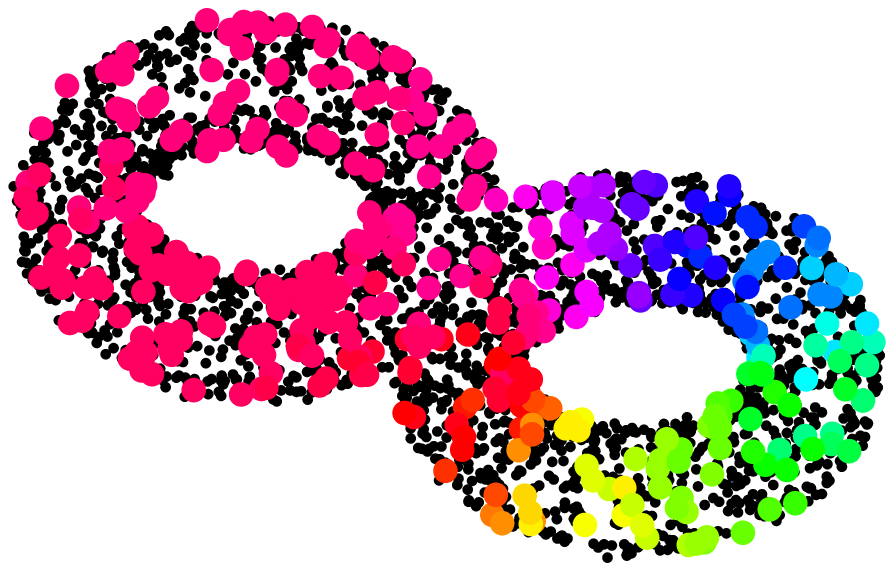}
     \hfill
     \includegraphics[width=0.225\linewidth]{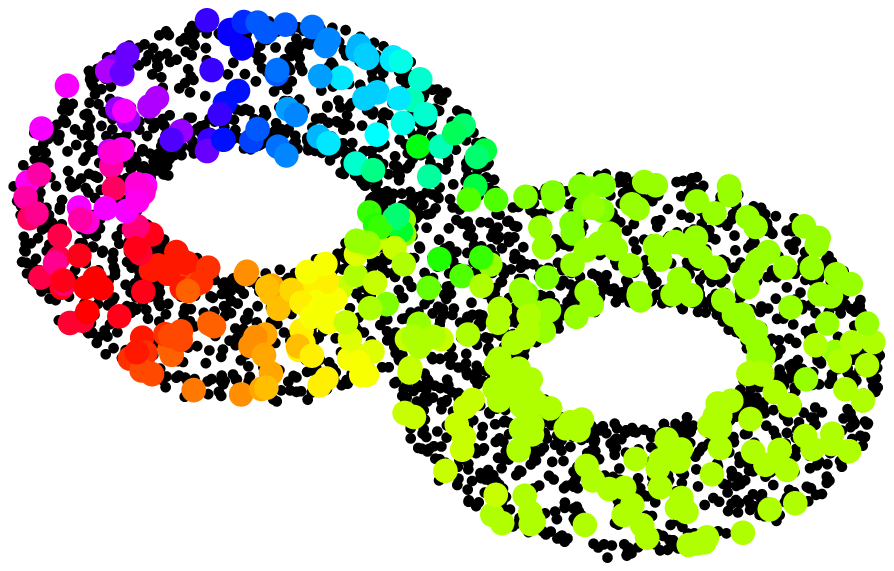}
    \\         
     \includegraphics[width=0.225\linewidth]{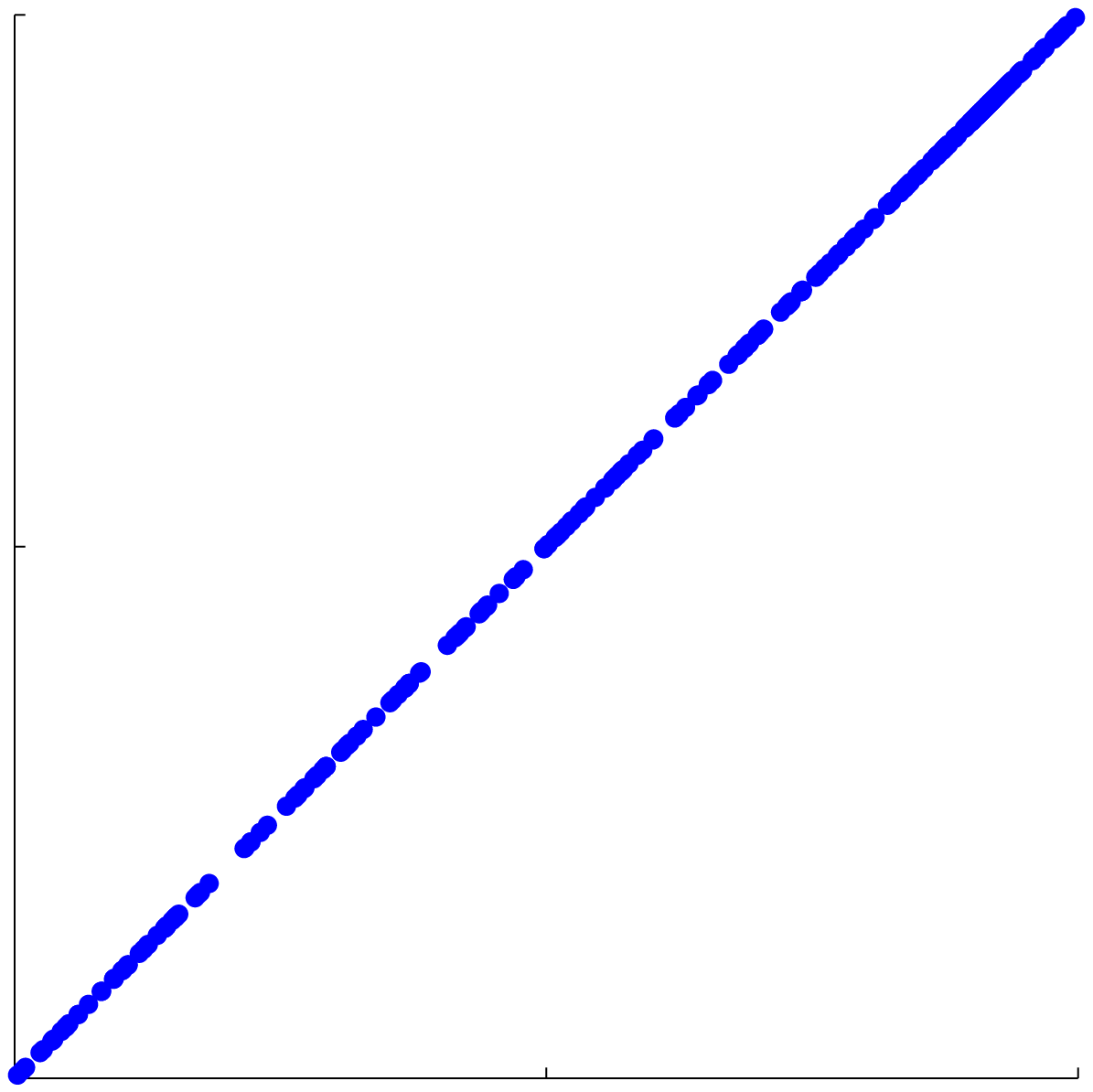}
     \hfill
     \includegraphics[width=0.225\linewidth]{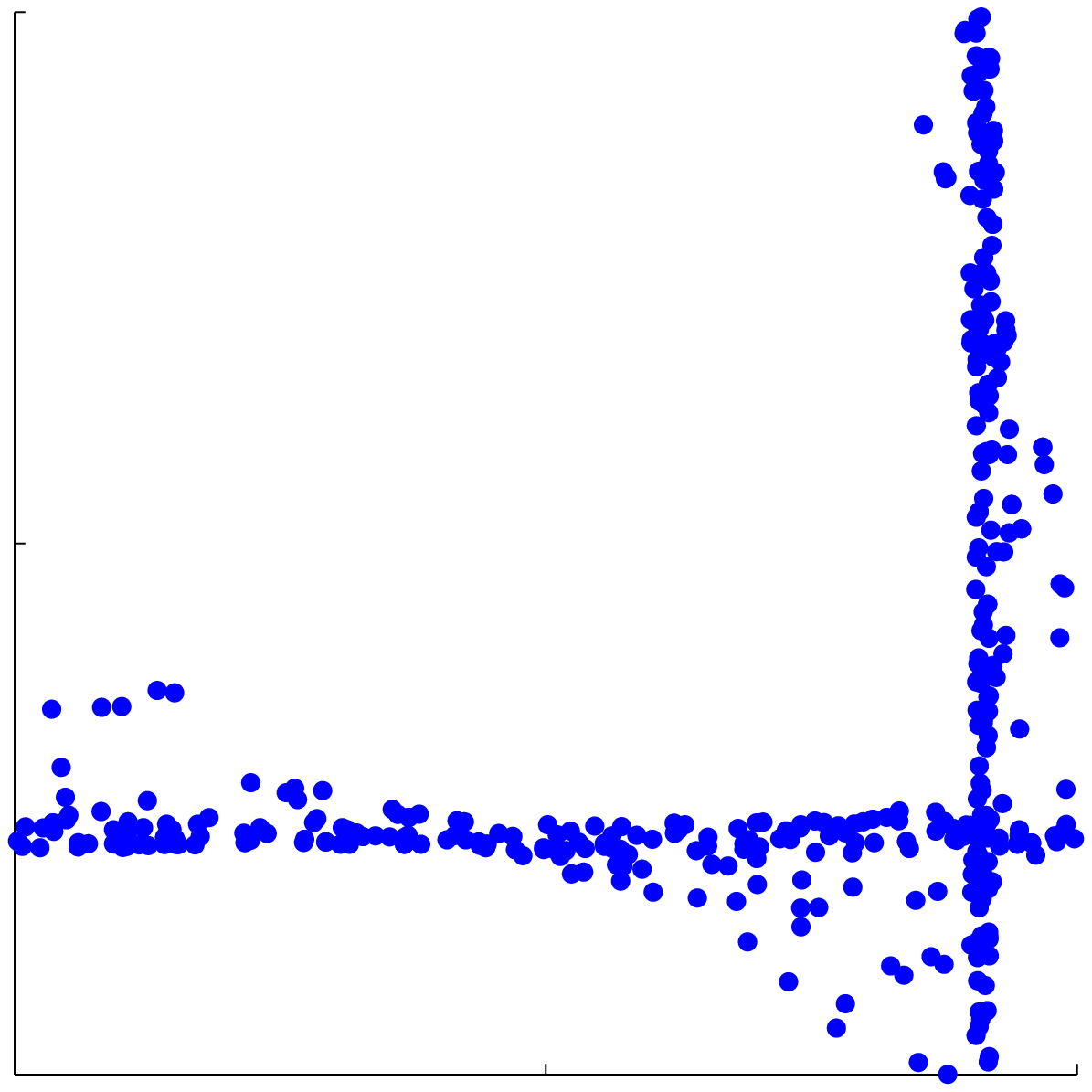}
     \hfill
     \includegraphics[width=0.225\linewidth]{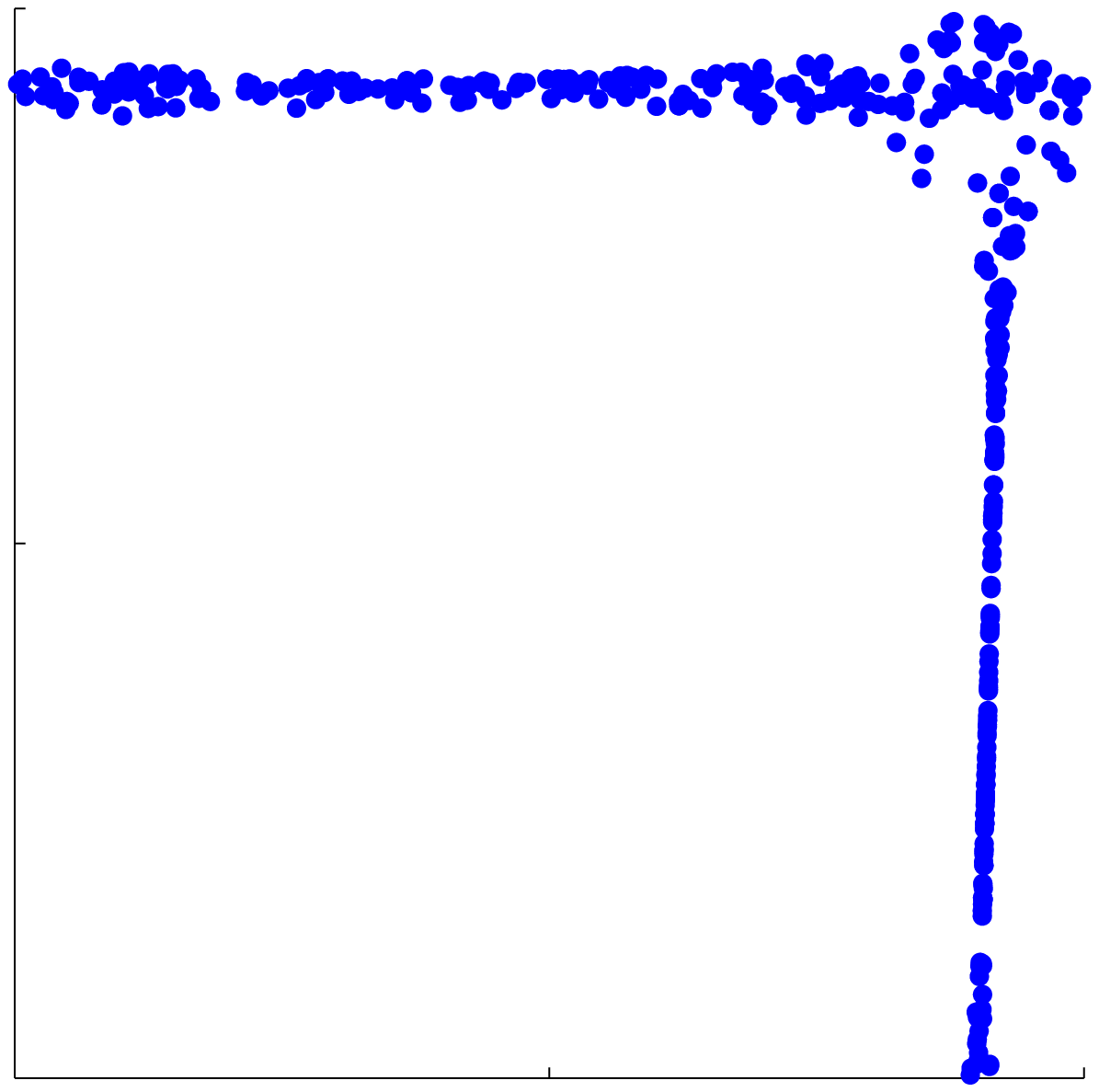}
     \hfill
     \includegraphics[width=0.225\linewidth]{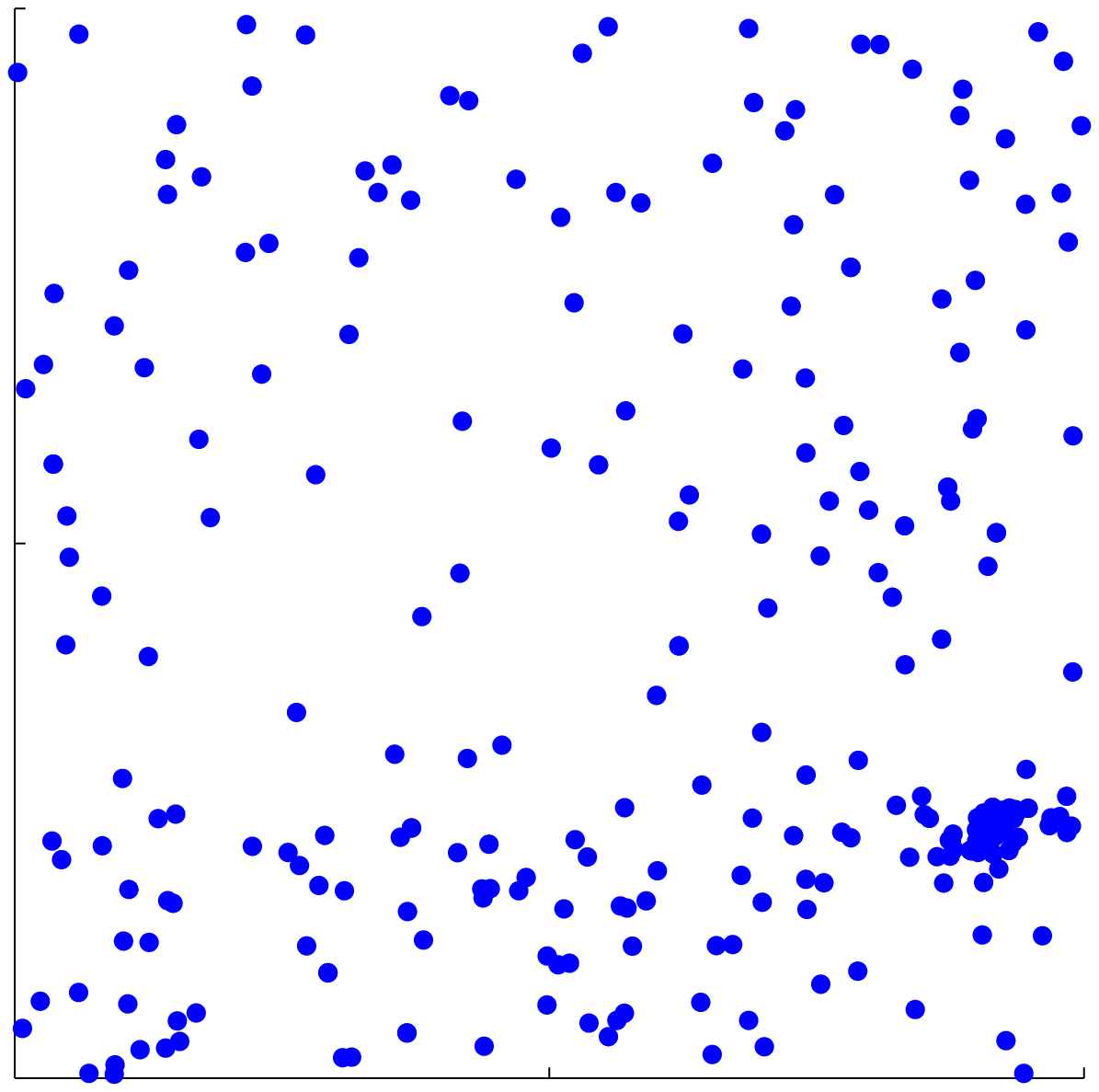}
\\         
     \includegraphics[width=0.225\linewidth]{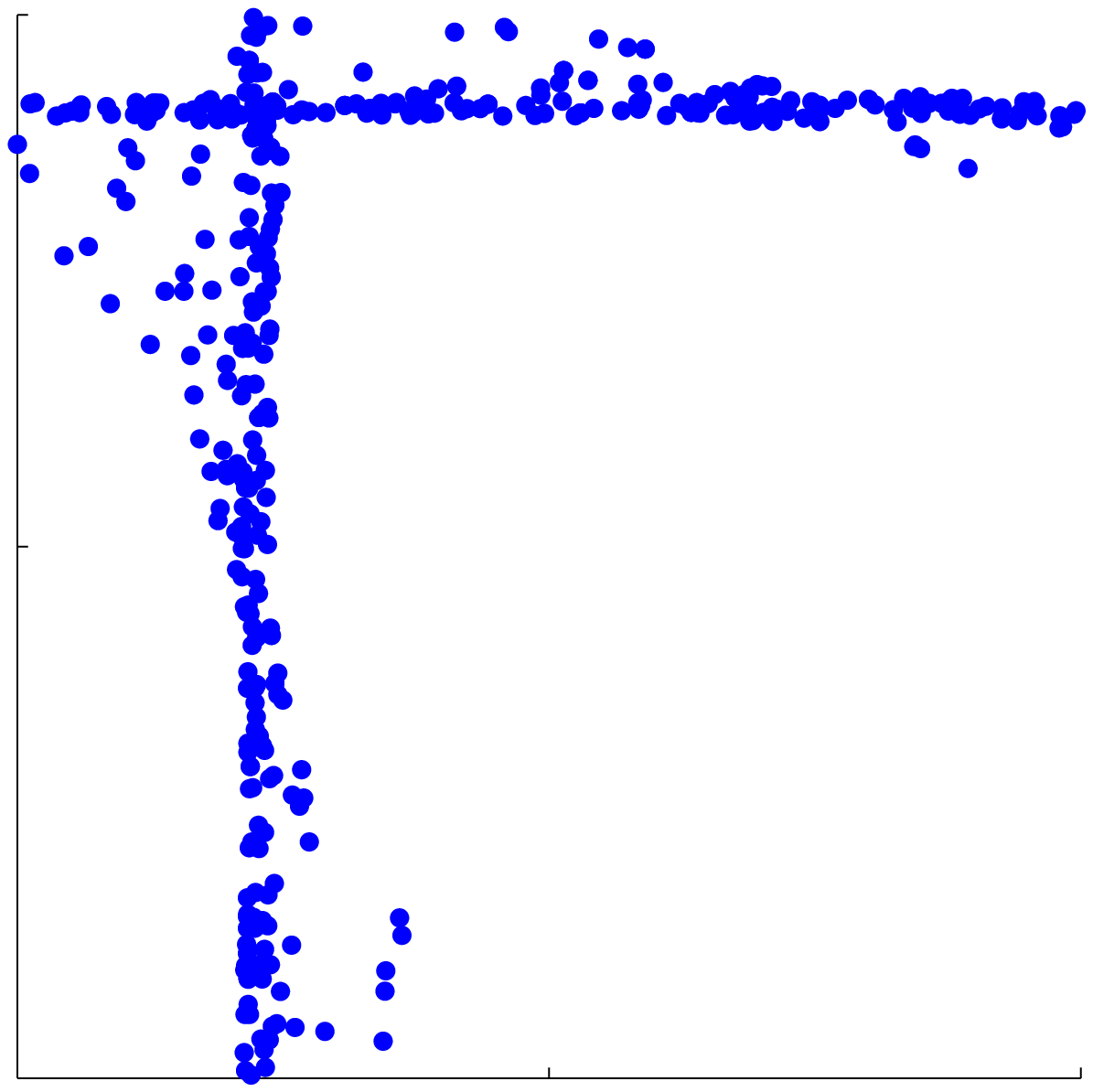}
     \hfill
     \includegraphics[width=0.225\linewidth]{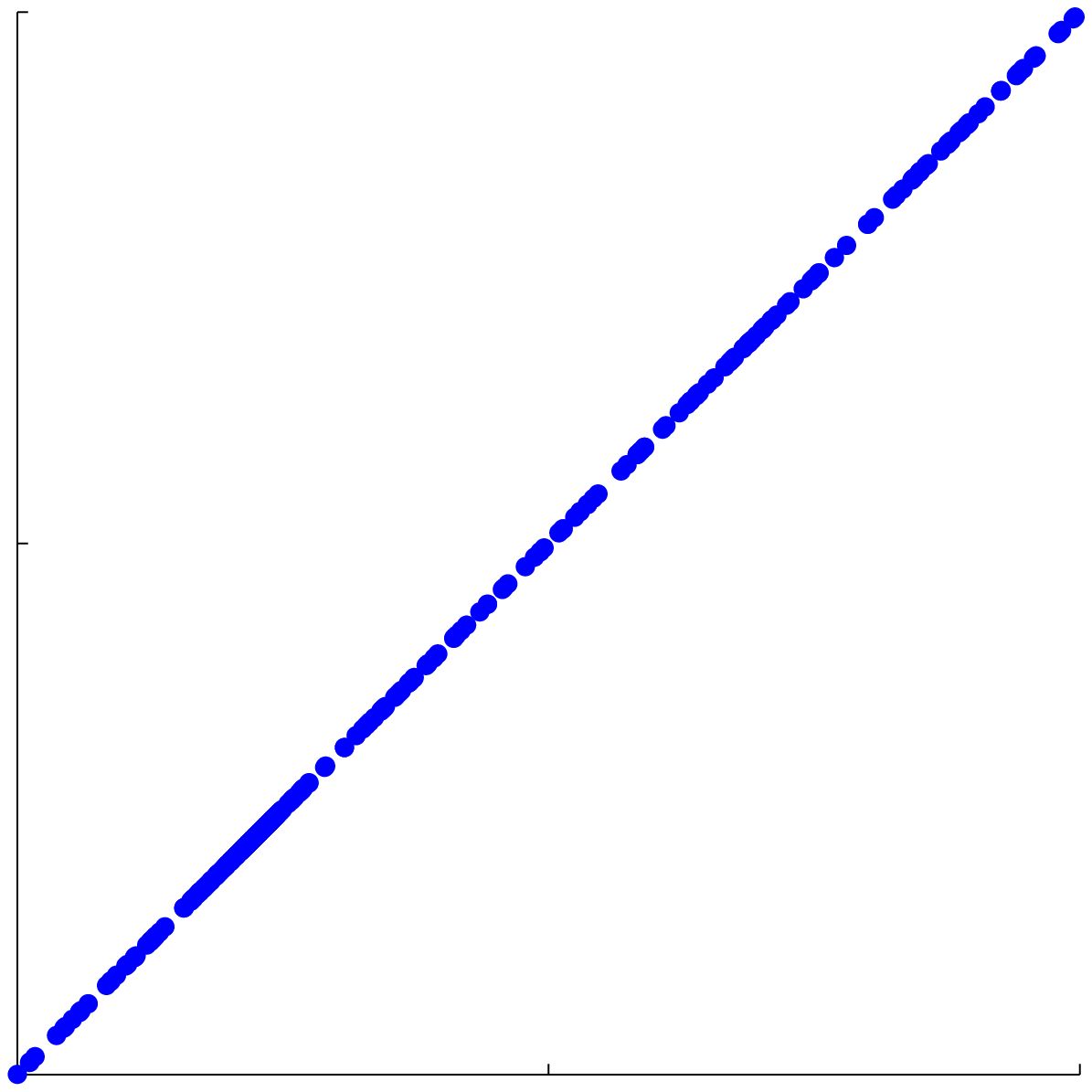}
     \hfill
     \includegraphics[width=0.225\linewidth]{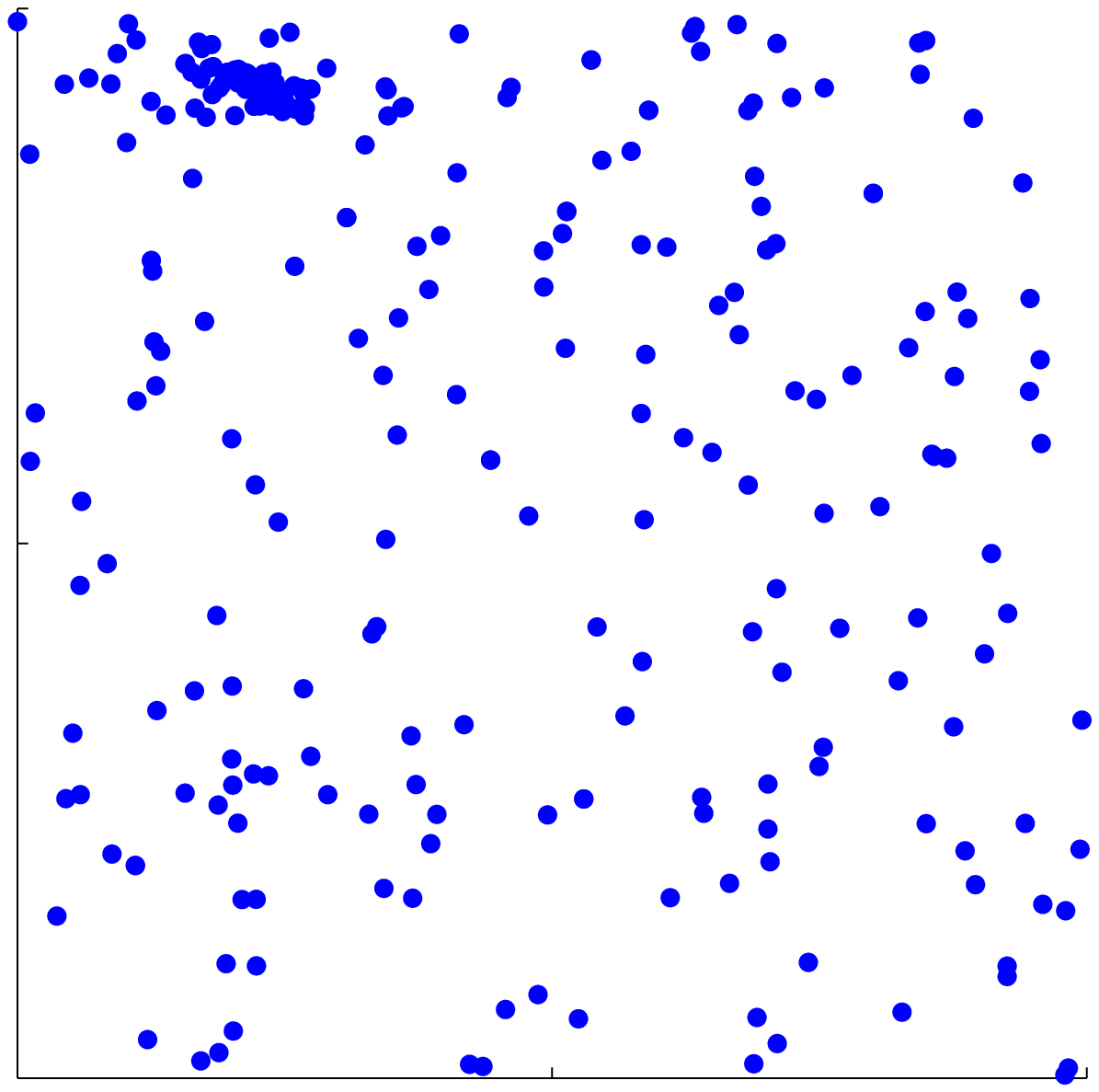}
     \hfill
     \includegraphics[width=0.225\linewidth]{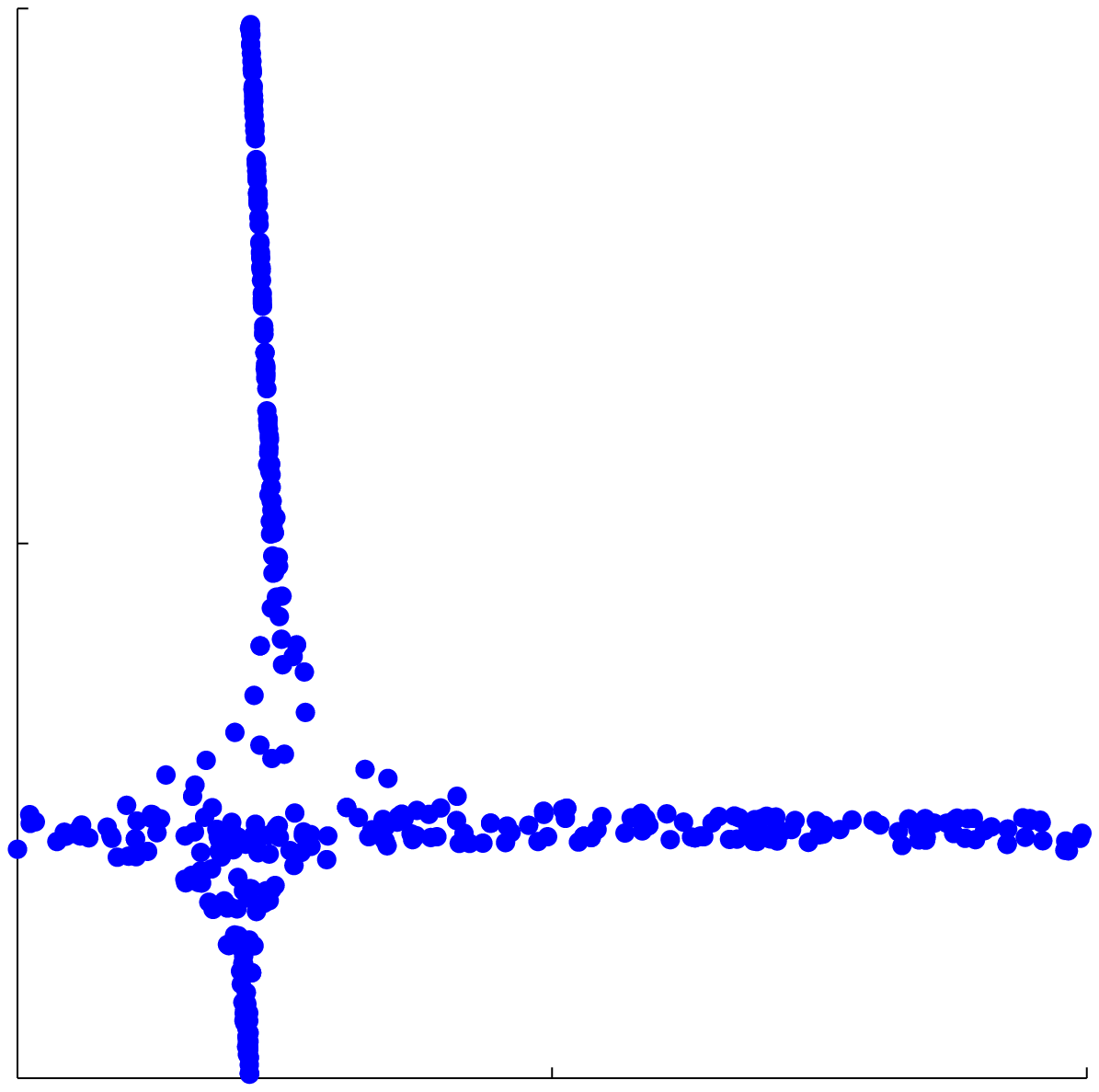}
\\         
     \includegraphics[width=0.225\linewidth]{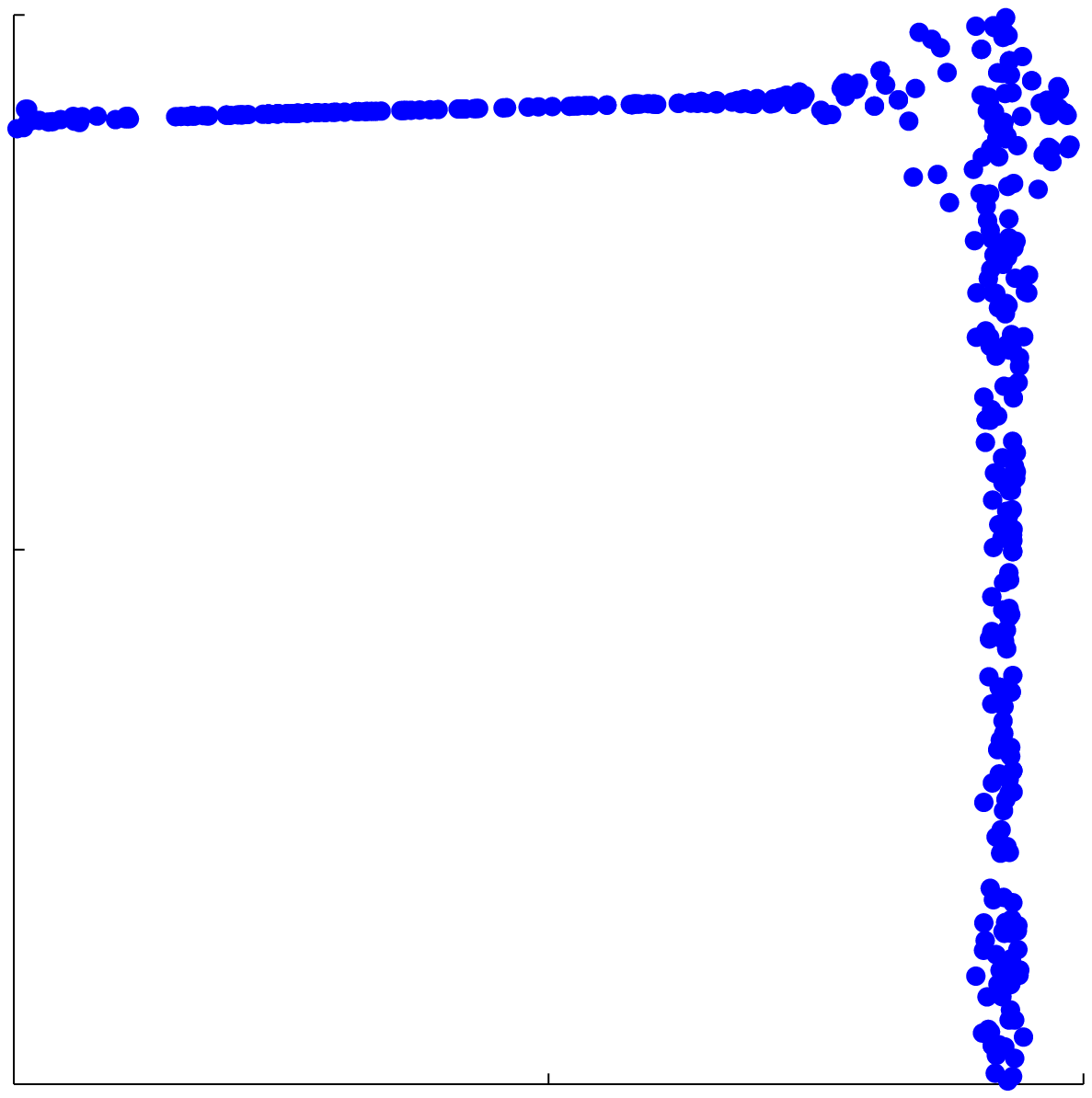}
     \hfill
     \includegraphics[width=0.225\linewidth]{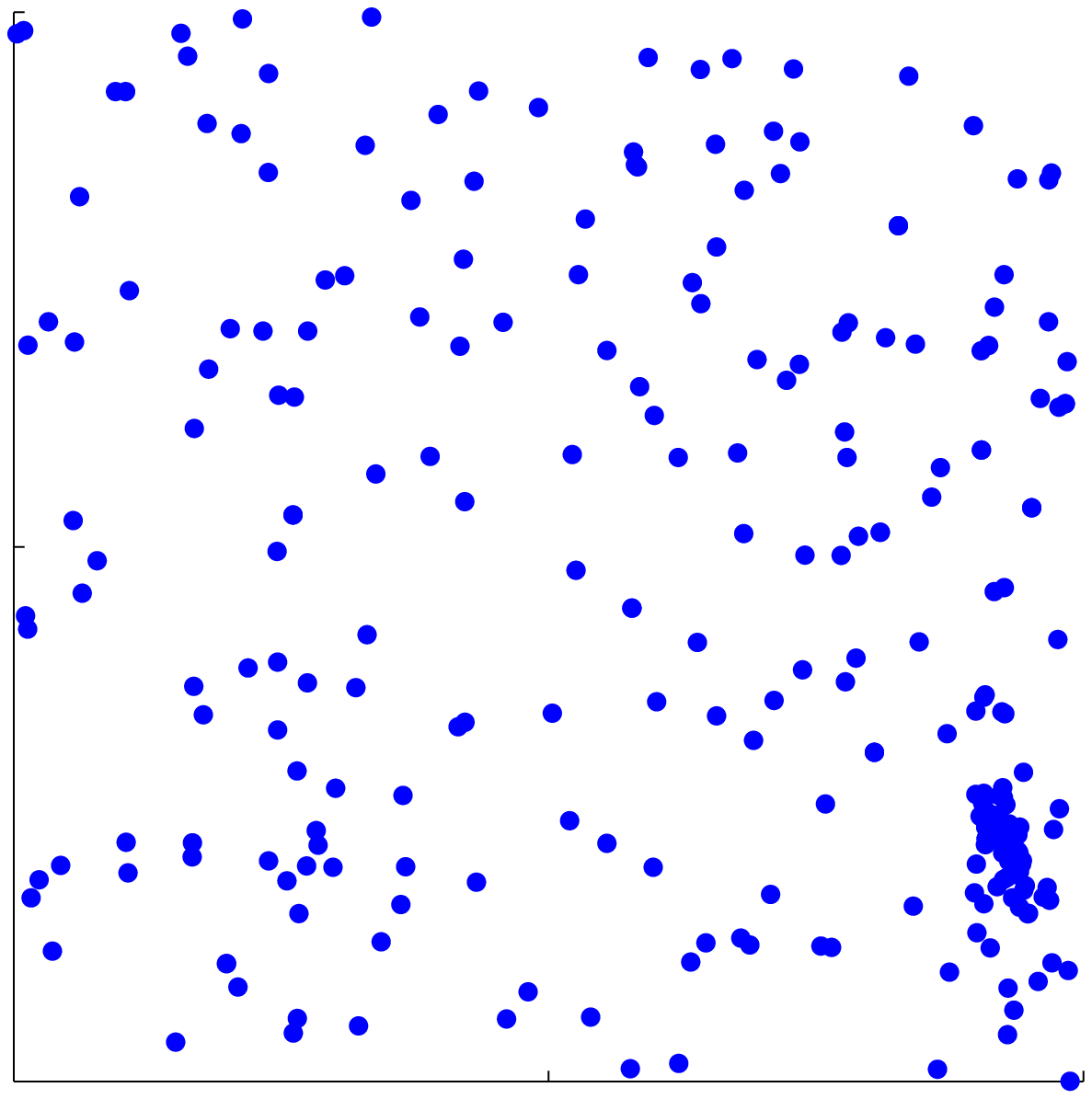}
     \hfill
     \includegraphics[width=0.225\linewidth]{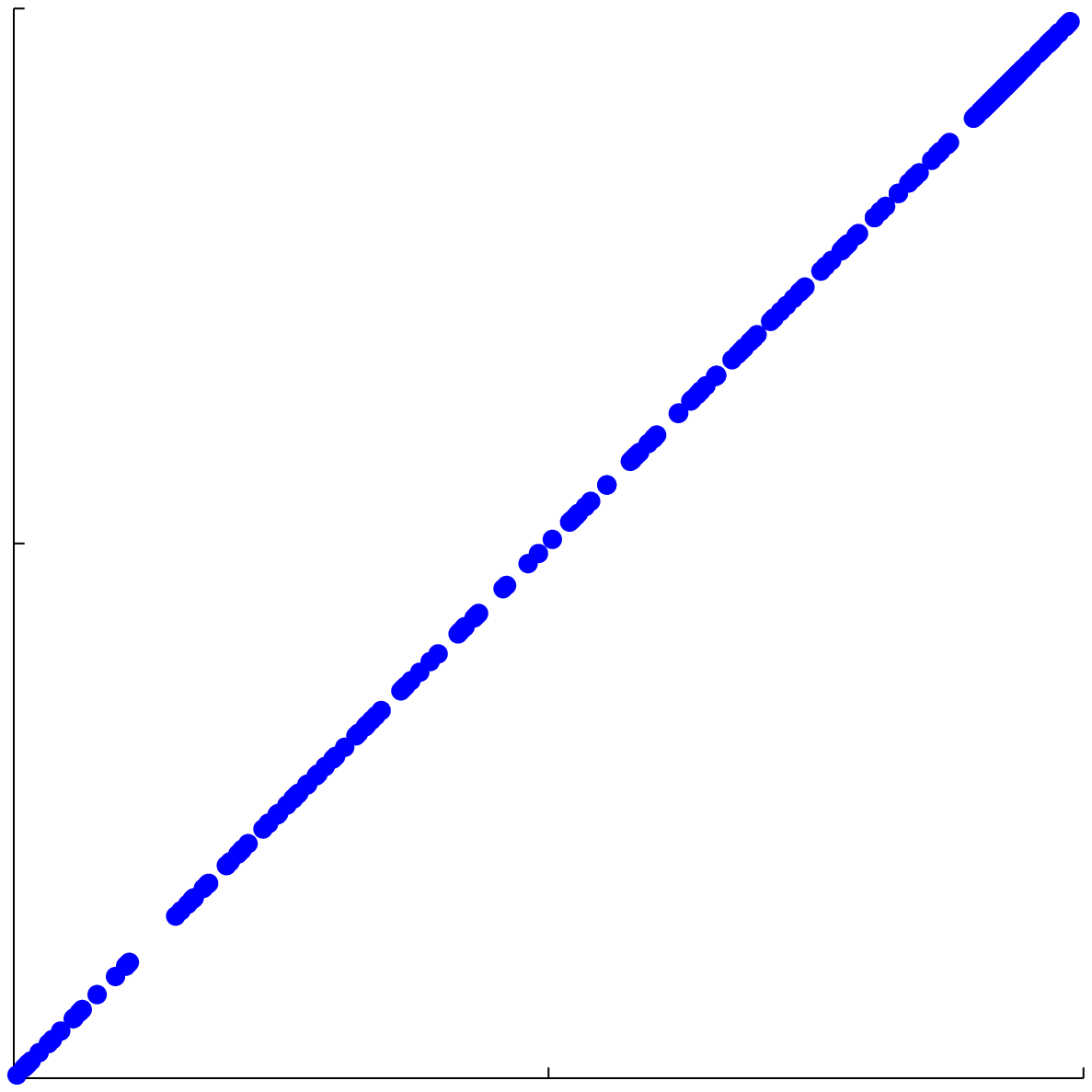}
     \hfill
     \includegraphics[width=0.225\linewidth]{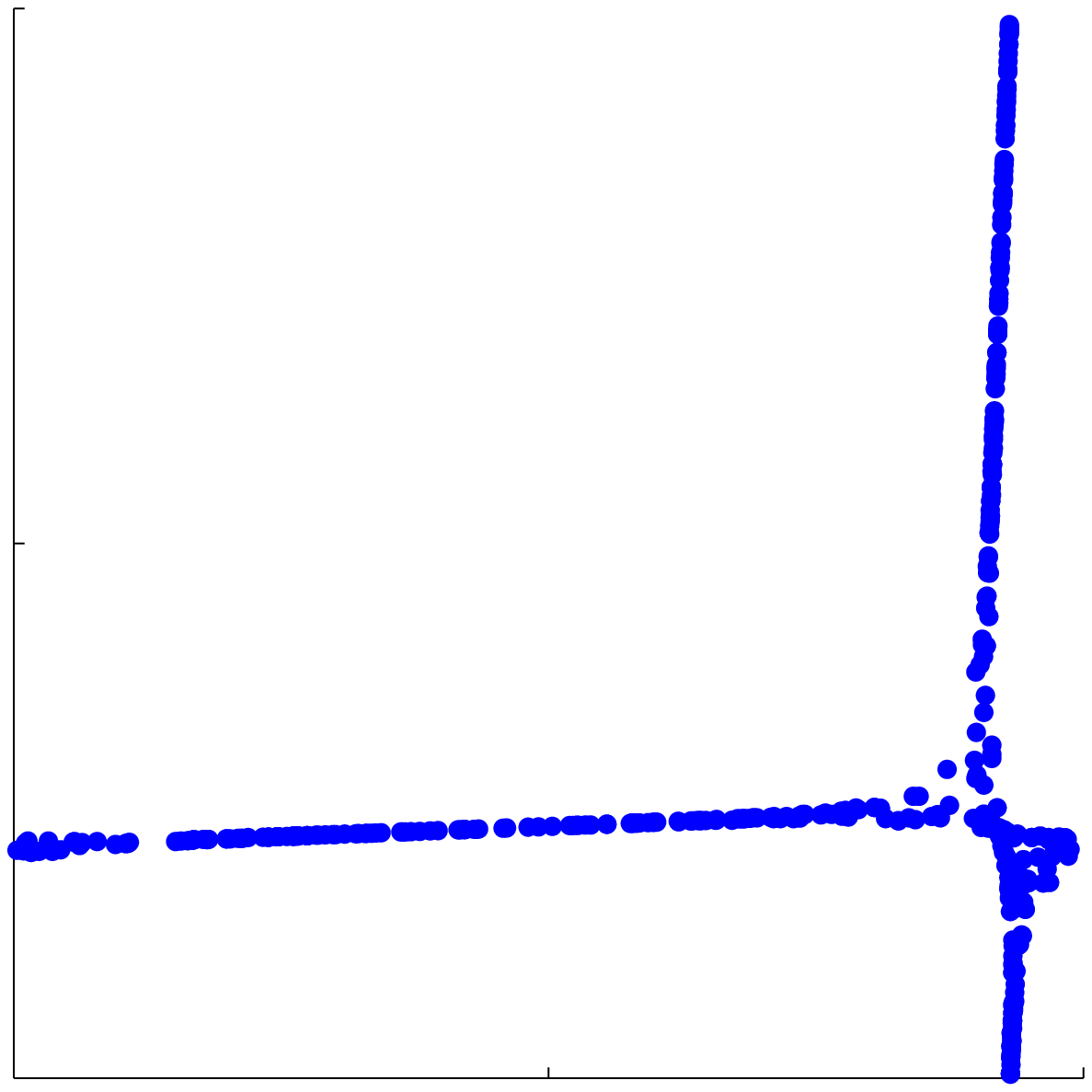}
\\         
     \includegraphics[width=0.225\linewidth]{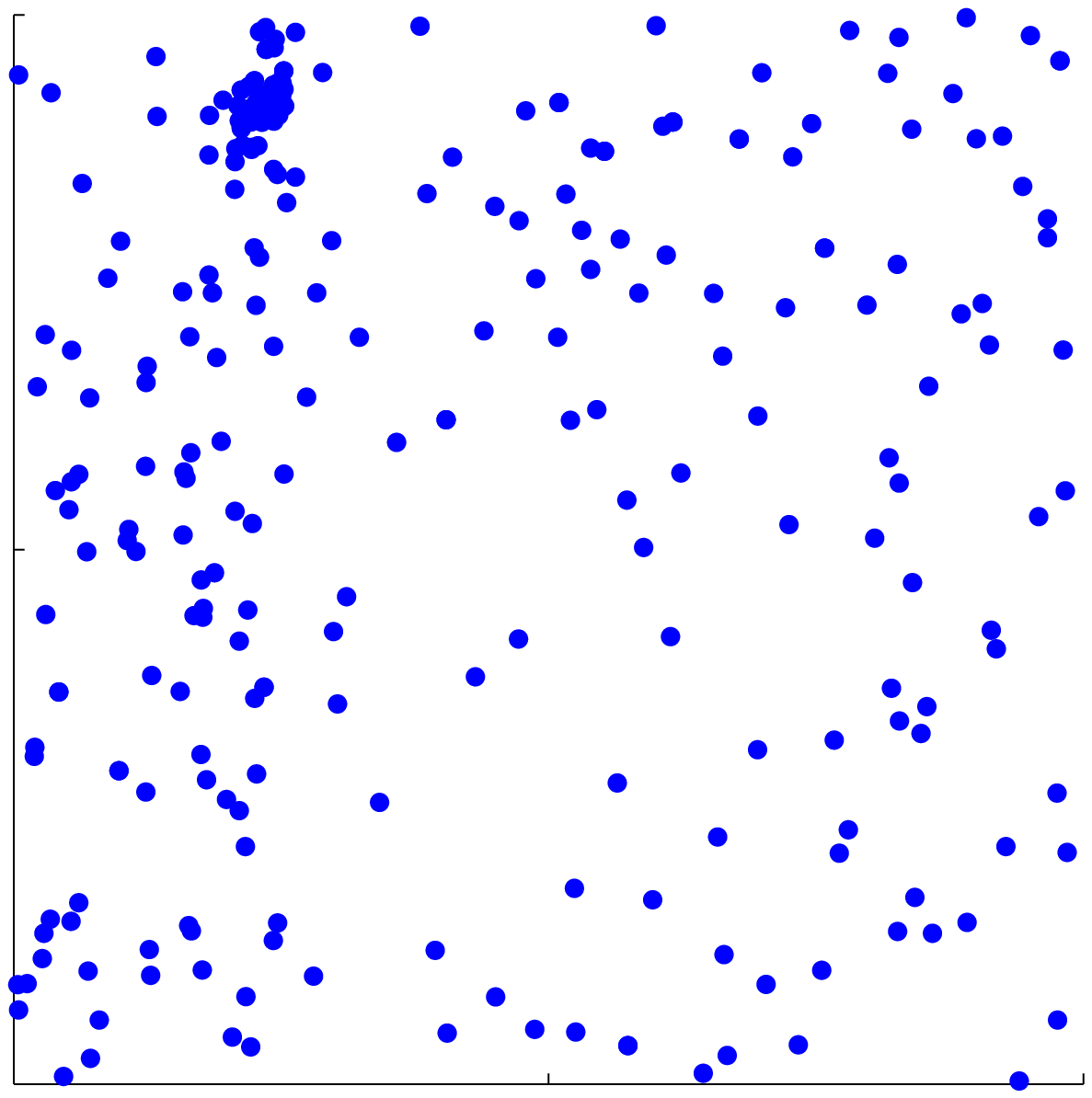}
     \hfill
     \includegraphics[width=0.225\linewidth]{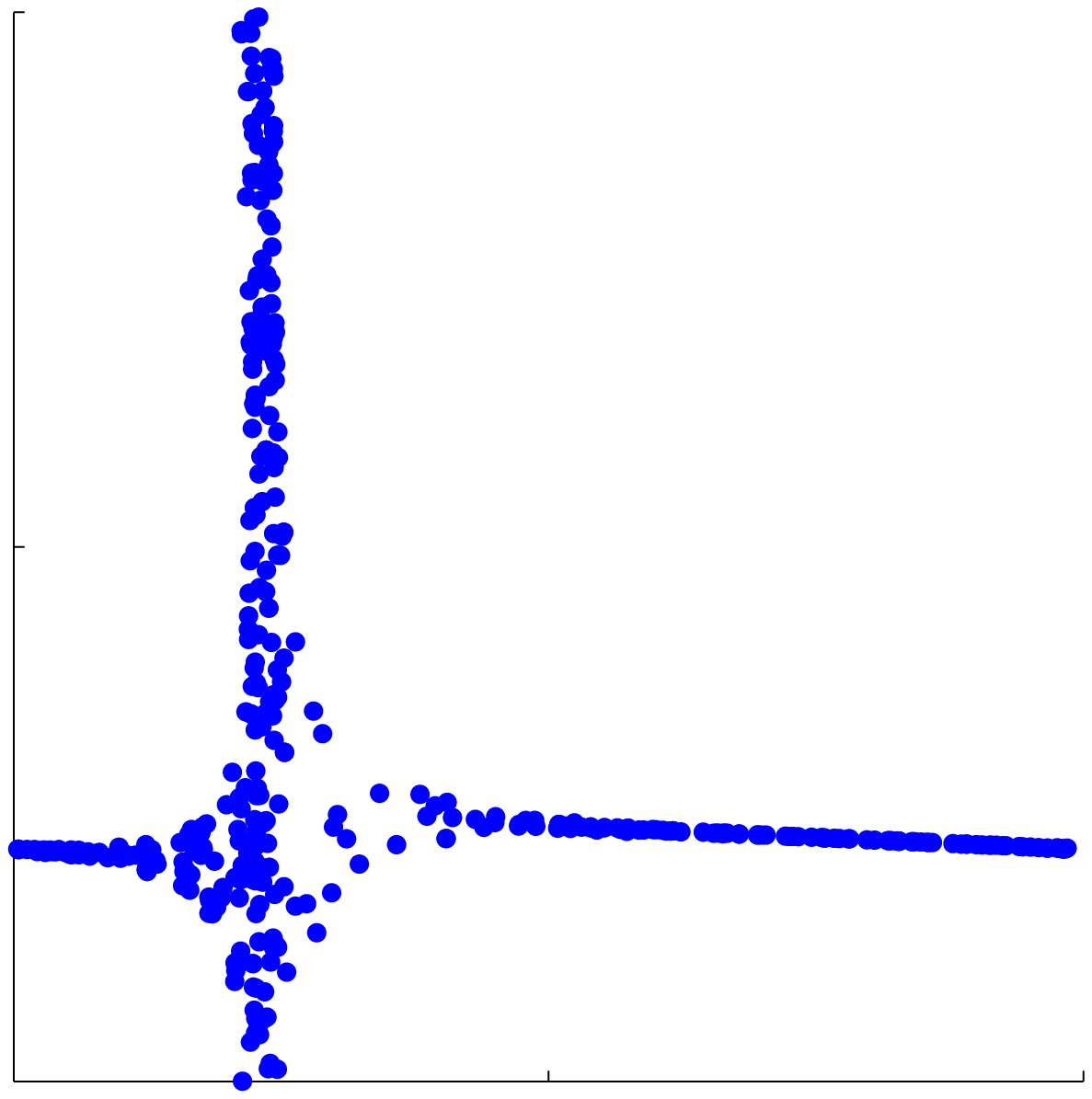}
     \hfill
     \includegraphics[width=0.225\linewidth]{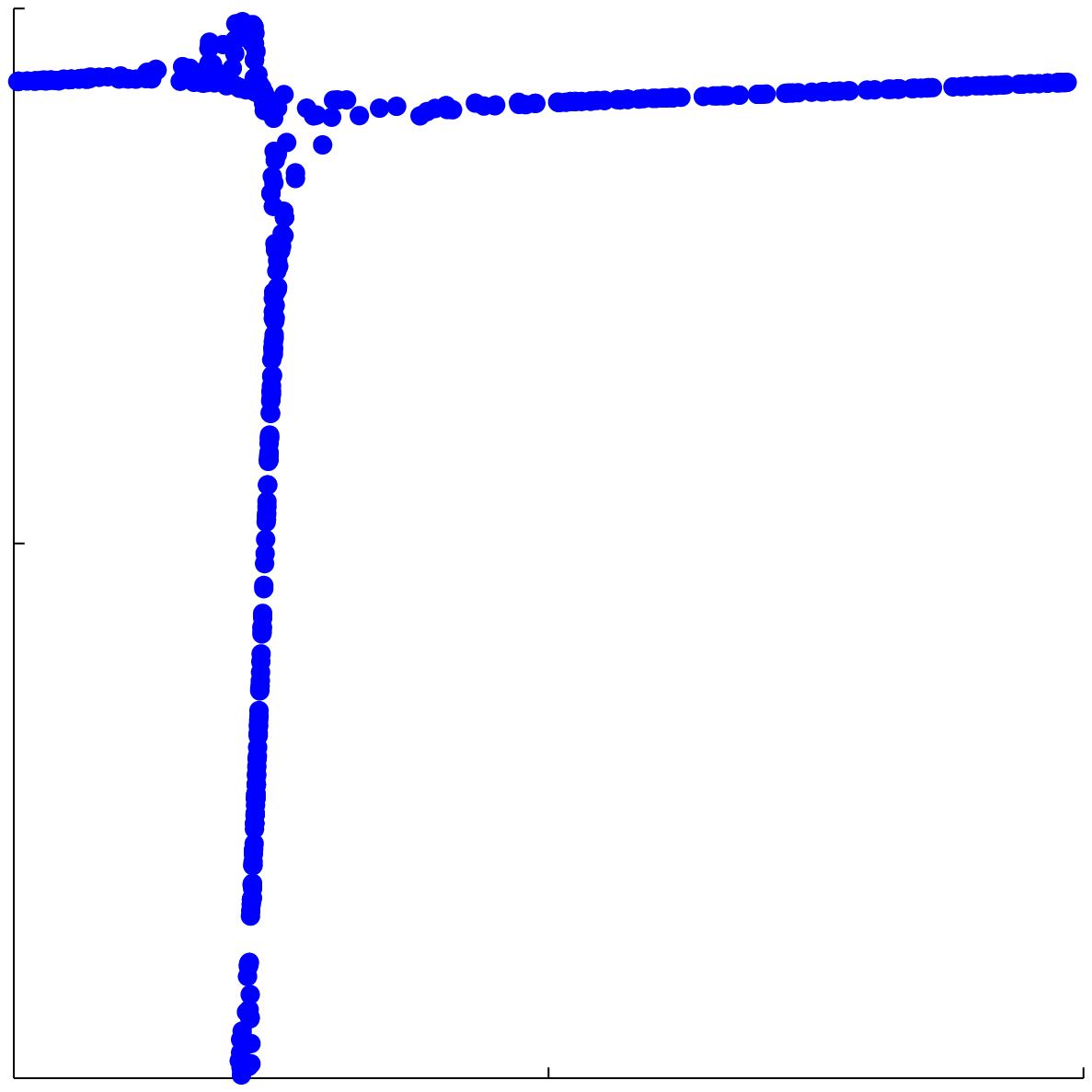}
     \hfill
     \includegraphics[width=0.225\linewidth]{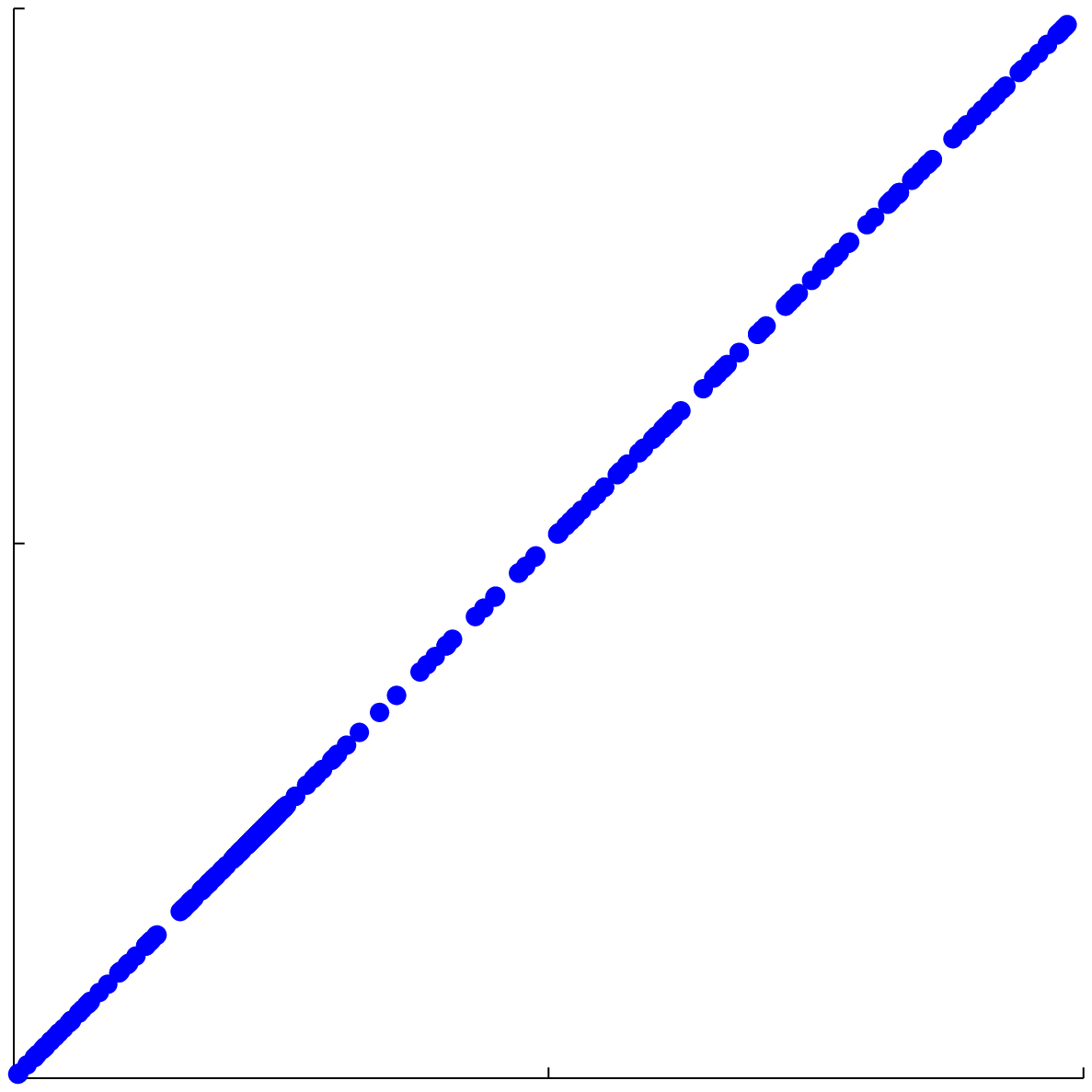}
    \end{flushright}
  \end{minipage}
  }
  \caption{Double torus in~$\Rr^3$.}
  \label{fig:doubleTorus}
\end{figure*}

Note that coordinates 1 and~4 are `coupled' in the sense that they are supported over the same subtorus of the double torus. The scatter plot shows that the two coordinates appear to be completely decorrelated except for a large mass concentrated at a single point. This mass corresponds to the other subtorus, on which coordinates 1 and~4 are essentially constant. A similar discussion holds for coordinates 2 and~3.

The uncoupled coordinate pairs (1,2), (1,3), (2,4), (3,4) produce scatter plots reminiscent of two conjoined or disjoint circles.

\eject
%-------------------------------------------------------
\section{Acknowledgements}
\label{sec:ack}

We are immensely grateful to Dmitriy Morozov: he has given us considerable assistance in implementing the algorithms in this paper. In particular we thank him for the persistent cocycle algorithm.

Thanks also to Jennifer Kloke for sharing her analysis of a visual image data set; this example did not make the present version of this paper. Finally, we thank Gunnar Carlsson, for his support and encouragement as leader of the topological data analysis research group at Stanford; and Robert Ghrist, as leader of the DARPA-funded project Sensor Topology and Minimal Planning (SToMP).

%-------------------------------------------------------
\bibliographystyle{abbrv}
%\bibliography{../../BibTeX/vin}

%-------------------------------------------------------
\end{document}